\newcommand{\res}{\!\!\mathop{\hbox{
                                \vrule height 7pt width .5pt depth 0pt
                                \vrule height .5pt width 6pt depth 0pt}}
                                \nolimits}
\def\z{{\bf z}}
\newtheorem{theorem}{Theorem}[section]
\newtheorem{lemma}[theorem]{Lemma}
\newtheorem{definition}[theorem]{Definition}
\newtheorem{proposition}[theorem]{Proposition}
\newtheorem{corollary}[theorem]{Corollary}
\newtheorem{remark}[theorem]{Remark}
\newtheorem{example}[theorem]{Example}
\newtheorem*{theorem*}{\it Theorem}
\def\vint_#1{\mathchoice%
          {\mathop{\kern 0.2em\vrule width 0.6em height 0.69678ex depth -0.58065ex
                  \kern -0.8em \intop}\nolimits_{\kern -0.4em#1}}%
          {\mathop{\kern 0.1em\vrule width 0.5em height 0.69678ex depth -0.60387ex
                  \kern -0.6em \intop}\nolimits_{#1}}%
          {\mathop{\kern 0.1em\vrule width 0.5em height 0.69678ex
              depth -0.60387ex
                  \kern -0.6em \intop}\nolimits_{#1}}%
          {\mathop{\kern 0.1em\vrule width 0.5em height 0.69678ex depth -0.60387ex
                  \kern -0.6em \intop}\nolimits_{#1}}}
\def\vintslides_#1{\mathchoice%
          {\mathop{\kern 0.1em\vrule width 0.5em height 0.697ex depth -0.581ex
                  \kern -0.6em \intop}\nolimits_{\kern -0.4em#1}}%
          {\mathop{\kern 0.1em\vrule width 0.3em height 0.697ex depth -0.604ex
                  \kern -0.4em \intop}\nolimits_{#1}}%
          {\mathop{\kern 0.1em\vrule width 0.3em height 0.697ex depth -0.604ex
                  \kern -0.4em \intop}\nolimits_{#1}}%
          {\mathop{\kern 0.1em\vrule width 0.3em height 0.697ex depth -0.604ex
                  \kern -0.4em \intop}\nolimits_{#1}}}
\def\R{\mathbb R}
\def\N{\mathbb N}
\def\Z{\mathbb Z}
\def\g{\hbox{\bf g}}
\numberwithin{equation}{section}
\def\1{\raisebox{2pt}{\rm{$\chi$}}}
\def\g{{\bf g}}
\definecolor{violet(ryb)}{rgb}{0.53, 0.0, 0.69}
\definecolor{olive(ryb)}{rgb}{0.42, 0.40, 0.18}
\begin{document}

\title[$(BV,L^p)$-decomposition  in  MRWS]{\bf $(BV,L^p)$-decomposition, $p=1,2$, of functions in  Metric Random Walk Spaces}

\author[J. M. Maz\'on, M. Solera, J. Toledo]{Jos\'e M. Maz\'on, Marcos Solera and Juli\'{a}n Toledo}

\address{J. M. Maz\'{o}n: Departamento de An\'{a}lisis Matem\'{a}tico,
Universitat de Val\`encia, Dr. Moliner 50, 46100 Burjassot, Spain.
 {\tt mazon@uv.es }}
\address{ M. Solera: Departament d'An\`{a}lisi Matem\`atica,
Universitat de Val\`encia, Dr. Moliner 50, 46100 Burjassot, Spain.
 {\tt marcos.solera@uv.es }}
\address{J. Toledo: Departament d'An\`{a}lisi Matem\`atica,
Universitat de Val\`encia, Dr. Moliner 50, 46100 Burjassot, Spain.
 {\tt toledojj@uv.es }}


\keywords{Random walk, ROF-model, multiscale decomposition, weighted discrete graph, nonlocal operators, total variation flow.\\
\indent 2010 {\it Mathematics Subject Classification:} 05C80, 35R02, 05C21, 45C99, 26A45.
}

\setcounter{tocdepth}{1}

\date{\today}

\begin{abstract}
In this paper we study the $(BV,L^p)$-decomposition, $p=1,2$, of functions in metric random walk spaces, a general workspace that includes weighted graphs and nonlocal models used  in image processing. We obtain the Euler-Lagrange equations  of the corresponding variational problems  and their gradient flows.  In the case $p=1$ we also study the associated geometric problem and the thresholding parameters.

\end{abstract}

\maketitle

{ \renewcommand\contentsname{Contents }
\setcounter{tocdepth}{3}
\tableofcontents
 }

\section{Introduction and Preliminaries}

A  metric random walk space $[X,d,m]$ is a metric space  $(X,d)$ together with a family  $m = (m_x)_{x \in X}$ of probability measures that  encode the jumps of a Markov chain. Important examples of metric random walk spaces are: locally finite weighted connected graphs, finite Markov chains and $[\R^N, d, m^J)$, with $d$ the Euclidean distance and
$$m^J_x(A) :=  \int_A J(x - y) d\mathcal{L}^N(y) \quad \hbox{for every Borel set } A \subset  \R^N ,$$
where $J:\R^N\to[0,+\infty[$ is a measurable, nonnegative and radially symmetric
function with mass equal to~$1$.
Furthermore, given a metric measure space $(X,d, \mu)$ we can obtain a metric random walk space $[X, d, m^{\mu,\epsilon}]$ called the {\it $\epsilon$-step random walk associated to $\mu$}, where
 $$m^{\mu,\epsilon}_x:= \frac{\mu \res B(x, \epsilon)}{\mu(B(x, \epsilon))}.$$

 Given a noisy  image $f:\Omega \rightarrow \R$ on a  rectangle $\Omega$  in $\R^2$, let us denote by $u : \Omega \rightarrow \R$  the desired cleaned image, which is related to the original one by $$f = u + n,$$
when  $n$  is an additive  noise. The problem of recovering $u$ from $f$ is ill-posed (see \cite{ACMBook}).
To handle this problem, Rudin, Osher and Fatemi (\cite{ROF}) proposed to solve the following constrained  minimization problem
\begin{equation}\label{ROFmod1}
\begin{array}{ll} \hbox{Minimize} \ \displaystyle\int_\Omega \vert Du \vert \quad \hbox{with} \ \ \displaystyle\int_\Omega u = \int_\Omega f, \ \  \int_\Omega \vert u - f\vert^2 = \sigma^2. \end{array}
\end{equation}
The first constraint corresponds to the assumption that the noise has zero mean, and the second that its standard deviation is $\sigma$. Problem \eqref{ROFmod1} is naturally linked to the following unconstrained problem (ROF-model):
\begin{equation}\label{ROFmod}
\min \left\{ \int_\Omega \vert Du \vert + \frac{\lambda}{2} \Vert u - f \Vert_2^2 \ : \ u \in BV(\Omega) \right\},
\end{equation}
for some Lagrange multiplier  $\lambda > 0$. Chambolle and Lions \cite{ChL} proved  an existence and uniqueness result for \eqref{ROFmod1},
as well as a proof of the link between \eqref{ROFmod1} and \eqref{ROFmod}. The constant $\lambda$ plays the role of a \lq\lq scale parameter". By tweaking $\lambda$, a user can select the level of detail desired in the reconstructed image.

The ROF-model leads to the $(BV,L^2)$-decomposition of $f$:
\begin{equation}\label{BV2decomp}
f = u_\lambda + v_\lambda , \quad [u_\lambda , v_\lambda] = \displaystyle{\rm argmin} \left\{ \int_\Omega \vert Du \vert + \frac{\lambda}{2} \Vert v \Vert_2^2  \ : \ f=u+v \right\} .
\end{equation}
 This decomposition   uses the $L^2$-fidelity term $\Vert f-u \Vert_2^2$. An alternative variational problem based on the $L^1$-fidelity term  $\Vert f -u\Vert_1$   was proposed by Alliney (\cite{Alliney}, \cite{Alliney1}) in one dimensional spaces and was studied extensively by Chan, Esedoglu and Nikolova (\cite{ChanEsedoglu}, \cite{ChanEsedogluNikolova}):
\begin{equation}\label{BV1decomp}
f = u_\lambda + v_\lambda , \quad [u_\lambda , v_\lambda] = \displaystyle{\rm argmin} \left\{ \int_\Omega \vert Du \vert + \lambda \Vert v \Vert_1  \ : \ f=u+v \right\} .
\end{equation}
The resulting $(BV,L^1)$ minimization differs from the $(BV,L^2)$ minimization in several important
aspects which have attracted considerable attention in recent years  (see \cite{AT}, \cite{Darbon}, \cite{DAG}, \cite{GO1}, \cite{YGO} and the references therein). Let us point out that the $(BV,L^1)$ minimization is contrast invariant (see \cite{ChanEsedoglu}), as opposed to
the $(BV,L^2)$ minimization.

 The use of  neighborhood filters by Buades, Coll and   Morel in  \cite{BCM}, that  was originally   proposed by P. Yaroslavsky \cite{Y1},  has led to an extensive literature of nonlocal models in image processing (see for instance \cite{BEM},  \cite{BEM1}, \cite{GO1}, \cite{KOJ}, \cite{LEL} and the references therein).  This nonlocal ROF-model, in a simplified version, has the form
 \begin{equation}\label{NonlocalROFmod}
\min \left\{ \int_{\Omega \times \Omega} J(x-y)   \vert u(x) - u(y) \vert dxdy + \frac{\lambda}{2} \Vert u - f \Vert_2^2 \ : \ u \in L^2(\Omega) \right\}.
\end{equation}

 On the other hand, an image can be seen as a weighted graph (see Example \ref{JJ} (3)) where the pixels are taken as the vertices, and the weights are related to the similarity between pixels. Depending on the problem there are different ways to define the weights, see for instance \cite{ELB}, \cite{HLE}, \cite{HLTE} and \cite{LEL}. The ROF-model in a weighted graph $G = (V(G), E(G), W(G))$ reads as follows:
  \begin{equation}\label{graphROFmod}
\min \left\{ \frac{1}{2}  \sum_{x \in V(G)} \sum_{y \in V(G)} \vert u(y) - u(x) \vert w_{xy}+ \frac{\lambda}{2} \sum_{x \in V(G)} \vert u(x) - f(x) \vert^2 \sum_{y \sim x} w_{xy}  : \ u \in L^2(G) \right\}.
\end{equation}

Problems  \eqref{NonlocalROFmod} and \eqref{graphROFmod} are particular cases of the following ROF-model in a metric random walk space $[X,d, m]$ with invariant  and reversible measure $\nu$:
 \begin{equation}\label{RWROFmod}
\min \left\{ \frac{1}{2} \int_{X}  \int_{X}  \vert u(y) - u(x) \vert dm_x(y) d\nu(x) + \frac{\lambda}{2} \int_X  \vert u(x) - f(x) \vert^2 d \nu(x)  : \ u \in L^2(X, \nu) \right\},
\end{equation}
 which is one of the motivations of this paper  and we call $m$-ROF-model.

 Another problem we are interested in is the $(BV,L^1)$ minimization in a metric random walk space $[X,d, m]$,  that reads as
  \begin{equation}\label{BVL1mod}
\min \left\{ \frac{1}{2} \int_{X}  \int_{X}  \vert u(y) - u(x) \vert dm_x(y) d\nu(x) + \lambda \int_X  \vert u(x) - f(x) \vert d \nu(x)  : \ u \in L^1(X, \nu) \right\},
\end{equation}
which has as a particular case the $(BV,L^1)$ minimization in graphs.

 The scale $\lambda$ in the $(BV,L^2)$-decomposition \eqref{BV2decomp} is viewed as a parameter that dictates the separation of the scale decomposition $f = u_\lambda + v_\lambda$. Following Meyer \cite{Meyer},   $u_\lambda$ extracts the edges of $f$ while $v_\lambda$ captures textures.

 In \cite{ROF}, to solve problem \eqref{ROFmod1},   the gradient descent method was used, which required to solve numerically the parabolic problem   \begin{equation}\label{ParabROFmod}
 \left\{ \begin{array}{lll} u_t = {\rm div} \left( \frac{ Du}{\vert Du \vert} \right)- \lambda (u - f) \quad &\hbox{in} \ (0, \infty) \times \Omega, \\ \\  \frac{ Du}{\vert Du \vert} \eta = 0 \quad &\hbox{on} \ (0, \infty) \times \partial \Omega, \\ \\ u(0, x) = v_0(x) \quad &\hbox{in} \ x \in \Omega,\end{array} \right.
 \end{equation}
 then the denoised version of $f$ is approached by the solution of~\eqref{ParabROFmod}  as $t$ increases.    The concept of solution for  which such problem  is well-possed  was given in \cite{ABCM1}.  We will see here that a non-local version of~\eqref{ParabROFmod} can be used to  approach the solutions of the ROF-problem in the workspace of metric random walk spaces (see Theorem~\ref{sab1001}).

Our aim is to  study the $(BV,L^p)$-decomposition, $p=1,2$, of functions in  metric random walk spaces,
developing a general theory that can be applied to weighted graphs and nonlocal models.

 \subsection{Metric Random Walk Spaces}

Let $(X,d)$ be a  Polish metric  space equipped with its Borel $\sigma$-algebra.
A {\it random walk} $m$ on $X$ is a family of probability measures $m_x$ on $X$, $x \in X$, satisfying the two technical conditions: (i) the measures $m_x$  depend measurably on the point  $x \in X$, i.e., for any Borel set $A\subset X$ and any Borel set $B\subset \R$, the set $\{ x \in X \ : \ m_x(A) \in B \}$ is Borel; (ii) each measure $m_x$ has finite first moment, i.e. for some (hence any) $z \in X$ and for any $x \in X$ one has $\int_X d(z,y) dm_x(y) < +\infty$ (see~\cite{O}).

A {\it metric random walk  space} (or {\it MRW space}) $[X,d,m]$  is a  Polish metric space $(X,d)$ together with a  random walk $m$.

Let $[X,d,m]$ be a metric random walk  space. A   Radon measure $\nu$ on $X$ is {\it invariant} with respect to the random walk $m=(m_x)$ if
$$d\nu(x)=\int_{y\in X}d\nu(y)dm_y(x),$$
that is,  for any $\nu$-measurable set $A\subset X$, it holds that $A$ is $m_x$-measurable  for $\nu$-almost all $x\in X$,  $\displaystyle x\mapsto  m_x(A)$ is $\nu$-measurable, and
$$\nu(A)=\int_X m_x(A)d\nu(x).$$
Consequently, if $\nu$ is an invariant measure with respect to $m$ and $f \in L^1(X, \nu)$, it holds that $f \in L^1(X, m_x)$ for $\nu$-a.e. $x \in X$, $\displaystyle x\mapsto \int_X f(y) d{m_x}(y)$ is $\nu$-measurable, and \label{paginv}
$$\int_X f(x) d\nu(x) = \int_X \left(\int_X f(y) d{m_x}(y) \right)d\nu(x).$$

The measure $\nu$ is said to be {\it reversible} for $m$ if moreover, the detailed balance condition $$dm_x(y)d\nu(x)  = dm_y(x)d\nu(y) $$ holds true. Under suitable assumptions on the metric random walk space $[X,d,m]$, such an invariant and reversible measure $\nu$ exists and is unique, as we will see below. Note that the reversibility condition implies the invariance condition.

 We will assume that the  metric measure space $(X,d)$ is $\sigma$-finite.

 \begin{example}\label{JJ}{\rm
  \begin{enumerate}
   \item \label{dom001}
Consider $(\R^N, d, \mathcal{L}^N)$, with $d$ the Euclidean distance and $\mathcal{L}^N$ the Lebesgue measure. Let  $J:\R^N\to[0,+\infty[$ be a measurable, nonnegative and radially symmetric
function  verifying $\int_{\R^N}J(z)dz=1$. In $(\R^N, d, \mathcal{L}^N)$ we have the following random walk, starting at $x$,
$$m^J_x(A) :=  \int_A J(x - y) d\mathcal{L}^N(y) \quad \hbox{for every Borel set } A \subset  \R^N.$$
Applying Fubini's Theorem it is easy to see that the Lebesgue measure $\mathcal{L}^N$ is an invariant and reversible measure for this random walk.

In the case that $\Omega \subset \R^N$ is a closed bounded set, if, for $x \in \Omega$, we define

$$m^{J,\Omega}_x(A):=\int_A J(x-y)dy+\left(\int_{\R^n\setminus \Omega}J(x-z)dz\right)\delta_x(A) \quad \forall A \subset  \Omega \ \hbox{Borel},$$
we have that each $m^{J,\Omega}_x$ is a probability measure in $\Omega$. Moreover, it is easy to see (\cite{MST0}) that
 $\nu = \mathcal{L}^N \res \Omega$ is an invariant and reversible measure for the random walk $m^{J,\Omega} = ( m^{J,\Omega}_x)_{x \in \Omega}$.

 \item  Let $K: X \times X \rightarrow \R$ be a Markov kernel on a countable space $X$, i.e.,
 $$K(x,y) \geq 0, \quad \forall x,y \in X, \quad \quad \sum_{y\in X} K(x,y) = 1 \quad \forall x \in X.$$
 Then, for $$m^K_x(A):= \sum_{y \in A} K(x,y),$$
 $[X, d, m^K]$ is a metric random walk space for  any  metric  $d$ on $X$. Basic Markov chain theory guarantees the existence of a unique  stationary probability measure (also called steady state) $\pi$ on $X$, that is,
 $$\sum_{x \in X} \pi(x) = 1 \quad \hbox{and} \quad \pi(y) = \sum_{x \in X} \pi(x) K(x,y) \quad \quad \forall y \in X.$$
 We say that $\pi$ is reversible for $K$ if the following detailed balance equation
 $$K(x,y) \pi(x) = K(y,x) \pi(y)$$ holds true for $x, y \in X$.

 \item \label{dom003} Consider  a locally finite weighted discrete graph $G = (V(G), E(G))$, where each edge $(x,y) \in E(G)$ (we will write $x\sim y$ if $(x,y) \in E(G)$) has a positive weight $w_{xy} = w_{yx}$ assigned. Suppose further that $w_{xy} = 0$ if $(x,y) \not\in E(G)$. The graph is equipped with the standard shortest path graph distance $d_G$, that is, $d_G(x,y)$ is the  minimal number of
edges connecting $x$ and $y$.  We will assume that any two points are connected, i.e., that the graph is connected. For $x \in V(G)$ we define the weight   at the vertex $x$ as $$d_x:= \sum_{y\sim x} w_{xy} = \sum_{y\in V(G)} w_{xy},$$
and the neighbourhood $N_G(x) := \{ y \in V(G) \ : \ x\sim y\}$. By definition of locally finite graph, the sets $N_G(x)$ are finite. When $w_{x,y}=1$ for every $x,y\in V(G)$, $d_x$ coincides with the degree of the vertex $x$ in the graph, that is,  the number of edges containing vertex $x$.

  For each $x \in V(G)$  we define the following probability measure
\begin{equation}\label{discRW}m^G_x=  \frac{1}{d_x}\sum_{y \sim x} w_{xy}\,\delta_y.\\ \\
\end{equation}
 We have that $[V(G), d_G, (m^G_x)]$ is a metric random walk space. It is not difficult to see that the measure $\nu_G$ defined as
 $$\nu_G(A):= \sum_{x \in A} d_x,  \quad A \subset V(G),$$
is an invariant and  reversible measure for this random walk.

Given a locally finite weighted discrete graph $G = (V(G), E(G))$, there is a natural definition of a Markov chain
on the vertices. We define the Markov kernell $K_G: V(G)\times V(G) \rightarrow \R$ as
$$K_G(x,y):= \frac{1}{d_x}  w_{xy}.$$
We have that $m^G$ and $m^{K_G}$ define the same random walk.  If $\nu_G(V(G))$ is finite,   the unique stationary and reversible probability measure is given by
$$\pi_G(\{x\}):= \frac{1}{\nu_G(V(G))} \sum_{z \in V(G)} w_{xz}. $$

 In Machine Learning Theory (\cite{G-TS}, \cite{G-TSBLBr}) an example of a weighted discrete graph is a point cloud in $\R^N$,  $V=\{x_1, \ldots , x_n \}$,  with edge weights $w_{x_i,x_j}$ given by
$$w_{x_i,x_j} := \eta(\vert x_i - x_j\vert), \quad 1 \leq i,j \leq n,$$
where the kernel  $ \eta : [0, \infty) \rightarrow [0, \infty)$ is a radial profile satisfying
\begin{itemize}
\item[(i)] \ $ \eta(0) >0$, and $\eta$ is continuous at $0$,
\item[(ii)]  \ $\eta$ is non-decreasing,
\item[(iii)] \ and the integral $\int_0^\infty  \eta(r) r^N dr$ is finite.
\end{itemize}

\item \label{dom006} From a metric measure space $(X,d, \mu)$ we can obtain a metric random walk space, the so called {\it $\epsilon$-step random walk associated to $\mu$}, as follows. Assume that balls in $X$ have finite measure and that ${\rm Supp}(\mu) = X$. Given $\epsilon > 0$, the $\epsilon$-step random walk on $X$, starting at point~$x$, consists in randomly jumping in the ball of radius $\epsilon$ around $x$, with probability proportional to $\mu$; namely
 $$m^{\mu,\epsilon}_x:= \frac{\mu \res B(x, \epsilon)}{\mu(B(x, \epsilon))}.$$
Note that $\mu$ is an invariant and reversible measure for the metric random walk space $[X, d, m^{\mu,\epsilon}]$.

\item \label{dom00606} Given a  metric random walk  space $[X,d,m]$ with invariant and reversible measure $\nu$, and given a $\nu$-measurable set $\Omega \subset X$ with $\nu(\Omega) > 0$, if we define, for $x\in\Omega$,
$$m^{\Omega}_x(A):=\int_A d m_x(y)+\left(\int_{X\setminus \Omega}d m_x(y)\right)\delta_x(A) \quad \hbox{ for every Borel set } A \subset  \Omega  ,
$$
 we have that $[\Omega,d,m^{\Omega}]$ is a metric random walk space and it easy to see that $\nu \res \Omega$ is  reversible for $m^{\Omega}$.
 \end{enumerate}
}
\end{example}

From now on   we will assume that $[X,d,m]$ is a metric random walk space with a   reversible measure $\nu$ for $m$.

\subsection{$m$-Perimeter}
We introduce the {\it $m$-interaction} between two $\nu$-measurable subsets $A$ and $B$ of $X$  as
\begin{equation}\label{nlinterdos} L_m(A,B):= \int_A \int_B dm_x(y) d\nu(x).
 \end{equation}
Whenever  $L_m(A,B) < \infty$, by the reversibility assumption on $\nu$ with respect to $m$, we have
 $$L_m(A,B)=L_m(B,A).$$

 We define the concept of $m$-perimeter of a $\nu$-measurable subset $E \subset X$ as
 $$P_m(E)=L_m(E,X\setminus E) = \int_E \int_{X\setminus E} dm_x(y) d\nu(x).$$
 It is easy to see that
\begin{equation}\label{secondf}P_m(E) = \frac{1}{2} \int_{X}  \int_{X}  \vert \1_{E}(y) - \1_{E}(x) \vert dm_x(y) d\nu(x).
\end{equation}
Moreover,  if $\nu(E)<+\infty$, we have
 \begin{equation}\label{secondf021}\displaystyle P_m(E)=\nu(E) -\int_E\int_E dm_x(y) d\nu(x).
\end{equation}

\begin{example}\label{dt}{\rm
\begin{enumerate}
\item \label{dt001} Let  $[\R^N, d, m^J]$ be   the metric random walk space given in Example   \ref{JJ} (\ref{dom001}) with invariant measure $\mathcal{L}^N$. Then,
    $$P_{m^J} (E) = \frac{1}{2} \int_{\R^N}  \int_{\R^N}  \vert \1_{E}(y) - \1_{E}(x) \vert J(x -y) dy dx,$$
    which coincides with the concept of $J$-perimeter introduced in \cite{MRT1} (see also \cite{MRTLibro}). On the other hand,
    $$P_{ m^{J,\Omega}} (E) = \frac{1}{2} \int_{\Omega}  \int_{\Omega}  \vert \1_{E}(y) - \1_{E}(x) \vert J(x -y) dy dx.$$
    Note that, in general, $P_{ m^{J,\Omega}} (E) \not= P_{m^J} (E).$

\item \label{dt003}

In the case of the metric random walk space $[V(G), d_G, m^G ]$ associated to a finite weighted discrete graph $G$, given $A, B \subset V(G)$, ${\rm Cut}(A,B)$ is defined as
$${\rm Cut}(A,B):= \sum_{x \in A, y \in B} w_{xy} = L_{m^G}(A,B),$$
and the perimeter of a set $E \subset V(G)$ is given by
$$\vert \partial E \vert := {\rm Cut}(E,E^c) = \sum_{x \in E, y \in V \setminus E} w_{xy}.$$
Consequently, we have that
\begin{equation}\label{perim}
\vert \partial E \vert = P_{m^G}(E) \quad \hbox{for all} \ E \subset V(G).
\end{equation}
\end{enumerate}
}

\end{example}

  Let us now remember some properties of the $m$-perimeter given in~\cite{MST1}.

\begin{proposition}[\cite{MST1}]\label{launion01}  1. Let $A,\ B \subset X$ be $\nu$-measurable sets with finite $m$-perimeter such that $\nu(A \cap B) = 0$. Then,
  $$P_m( A \cup B) = P_m( A) + P_m(B) - 2 L_m(A,B).$$

\noindent 2. Let $A,\ B,\ C$ be $\nu$-measurable sets in $X$ with pairwise $\nu$-null intersections. Then
 $$
 \displaystyle P_m( A \cup B\cup C)=P_m( A \cup B) +P_m(A\cup C) + P_m(B\cup C) - P_m( A) - P_m(B)-P_m(C) .
  $$
\end{proposition}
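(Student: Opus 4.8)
The plan is to reduce everything to bilinear bookkeeping with the $m$-interaction $L_m$, using the definition $P_m(E)=L_m(E,X\setminus E)$, the additivity of $L_m(\cdot,\cdot)$ over $\nu$-disjoint decompositions of either argument, and the symmetry $L_m(A,B)=L_m(B,A)$ guaranteed by reversibility.

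The first step is a routine but essential reduction. Since $\nu$ is reversible, hence invariant, for $m$, from $0=\nu(A\cap B)=\int_X m_x(A\cap B)\,d\nu(x)$ and $m_x(A\cap B)\ge 0$ we get $m_x(A\cap B)=0$ for $\nu$-a.e.\ $x$. Therefore, replacing $B$ by $B\setminus A$ changes neither $P_m(B)$ (the indicator $\1_B$ is altered only on the set $A\cap B$, which is $\nu$-null and $m_x$-null for $\nu$-a.e.\ $x$, so the double integral in \eqref{secondf} is unchanged) nor $L_m(A,B)$ nor $A\cup B$. Hence in part~1 I may assume $A\cap B=\varnothing$. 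Writing $D:=X\setminus(A\cup B)$, we then have the genuine disjoint decompositions $X\setminus A=B\sqcup D$ and $X\setminus B=A\sqcup D$, and since each $m_x$ is countably additive,
$$P_m(A)=L_m(A,X\setminus A)=L_m(A,B)+L_m(A,D),\qquad P_m(B)=L_m(B,A)+L_m(B,D),$$
$$P_m(A\cup B)=L_m(A\cup B,D)=L_m(A,D)+L_m(B,D).$$
Subtracting the first two relations from the sum and using $L_m(A,B)=L_m(B,A)$ yields $P_m(A\cup B)=P_m(A)+P_m(B)-2L_m(A,B)$, which is part~1. Finiteness is automatic, since $0\le L_m(A,B)\le L_m(A,X\setminus A)=P_m(A)<\infty$, so all terms are finite.

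For part~2 I would iterate part~1. From $\nu((A\cup B)\cap C)\le\nu(A\cap C)+\nu(B\cap C)=0$, part~1 applied to the pair $A\cup B,\ C$ gives
$$P_m(A\cup B\cup C)=P_m(A\cup B)+P_m(C)-2L_m(A\cup B,C),$$
and additivity of $L_m$ in the first slot (again using $\nu(A\cap B)=0$) gives $L_m(A\cup B,C)=L_m(A,C)+L_m(B,C)$. Rewriting part~1 as $2L_m(A,C)=P_m(A)+P_m(C)-P_m(A\cup C)$ and $2L_m(B,C)=P_m(B)+P_m(C)-P_m(B\cup C)$ and substituting, the redundant $P_m(C)$-terms cancel and one is left with exactly $P_m(A\cup B)+P_m(A\cup C)+P_m(B\cup C)-P_m(A)-P_m(B)-P_m(C)$. (When one of $P_m(A),P_m(B),P_m(C)$ is infinite, a quick check via the inequality $P_m(A\cup B)\ge P_m(A)-P_m(B)$ shows both sides of the identity are $+\infty$, so it suffices to argue in the finite case.)

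The only genuinely delicate point is the first reduction: that a $\nu$-negligible overlap is also $m_x$-negligible for $\nu$-a.e.\ $x$. Without it, none of the manipulations of $\1_{A\cup B}$, $\1_A$, $\1_B$ inside the double integral, nor the "disjoint union" splittings of $L_m$, would be legitimate. Everything after that is just the additivity and symmetry of $L_m$.
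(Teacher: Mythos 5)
The paper itself gives no proof of this proposition---it is quoted from \cite{MST1}---so there is nothing to compare line by line; your argument is correct and is precisely the bookkeeping one expects: use invariance of $\nu$ to upgrade $\nu(A\cap B)=0$ to $m_x(A\cap B)=0$ for $\nu$-a.e.\ $x$ (so the sets may be taken genuinely disjoint), expand each perimeter as a sum of $L_m$-interactions over the partition $\{A,\,B,\,X\setminus(A\cup B)\}$, and invoke the symmetry $L_m(A,B)=L_m(B,A)$, with part~2 following by applying part~1 to the pair $(A\cup B,\,C)$ and eliminating the interaction terms. The only quibble is your parenthetical on infinite perimeters in part~2: if two of $P_m(A)$, $P_m(B)$, $P_m(C)$ are infinite the right-hand side reads $\infty-\infty$, so the identity is only meaningful under an implicit finiteness assumption in any case---a defect of the statement as quoted rather than of your proof.
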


  \begin{proposition}[Submodularity]\label{Juli1sub}
Let $A$ and $B$ be $\nu$-measurable sets in $X$. Then
$$ P_m( A \cup B)+ P_m(A\cap B)= P_m(A) + P_m(B)- 2L_m(A\setminus B, B\setminus A) .$$
Consequently,
 $$ P_m( A \cup B)+ P_m(A\cap B)\le P_m(A) + P_m(B) .$$
\end{proposition}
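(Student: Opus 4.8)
The plan is to reduce everything to the first identity in the statement, namely
$P_m(A\cup B)+P_m(A\cap B)=P_m(A)+P_m(B)-2L_m(A\setminus B,B\setminus A)$,
since the inequality $P_m(A\cup B)+P_m(A\cap B)\le P_m(A)+P_m(B)$ follows immediately from nonnegativity of the interaction term $L_m(A\setminus B,B\setminus A)\ge 0$ (which is clear from its definition \eqref{nlinterdos} as an integral of nonnegative measures). So all the work is in the identity.

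First I would decompose the two sets into the three disjoint pieces $A\setminus B$, $B\setminus A$ and $A\cap B$, which have pairwise $\nu$-null intersections (in fact disjoint). Writing $A=(A\setminus B)\cup(A\cap B)$ and $B=(B\setminus A)\cup(A\cap B)$, I would apply Proposition~\ref{launion01}(1) to each: $P_m(A)=P_m(A\setminus B)+P_m(A\cap B)-2L_m(A\setminus B,A\cap B)$ and similarly for $B$. For the union, $A\cup B=(A\setminus B)\cup(B\setminus A)\cup(A\cap B)$ is a disjoint union of three sets, so Proposition~\ref{launion01}(2) applies and expresses $P_m(A\cup B)$ in terms of the perimeters of the pairwise unions $(A\setminus B)\cup(B\setminus A)$, $(A\setminus B)\cup(A\cap B)=A$, $(B\setminus A)\cup(A\cap B)=B$, minus the perimeters of the three single pieces. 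Then I would expand $P_m((A\setminus B)\cup(B\setminus A))$ once more via Proposition~\ref{launion01}(1) as $P_m(A\setminus B)+P_m(B\setminus A)-2L_m(A\setminus B,B\setminus A)$.

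At this point everything is written in terms of $P_m(A\setminus B)$, $P_m(B\setminus A)$, $P_m(A\cap B)$, and the three interaction terms $L_m(A\setminus B,A\cap B)$, $L_m(B\setminus A,A\cap B)$, $L_m(A\setminus B,B\setminus A)$. Adding the expression for $P_m(A\cup B)$ to $P_m(A\cap B)$ and comparing with $P_m(A)+P_m(B)$ (using the two expansions from the first paragraph), the terms $P_m(A\setminus B)$, $P_m(B\setminus A)$, $P_m(A\cap B)$ and the interaction terms $L_m(A\setminus B,A\cap B)$, $L_m(B\setminus A,A\cap B)$ cancel in pairs, leaving exactly the claimed $-2L_m(A\setminus B,B\setminus A)$. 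This is a purely algebraic bookkeeping step once the two structural propositions are in place.

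The main obstacle — really the only point requiring care — is the finiteness hypotheses needed to invoke Proposition~\ref{launion01}: that proposition is stated for sets of finite $m$-perimeter. If $A$ and $B$ have finite $m$-perimeter, one should first check that $A\setminus B$, $B\setminus A$, $A\cap B$ also do (this follows from submodularity-type monotonicity arguments, or directly from the fact that $\1_{A\cap B}=\min\{\1_A,\1_B\}$ and $\1_{A\setminus B}=(\1_A-\1_B)^+$ combined with \eqref{secondf} and the triangle inequality inside the integrand), so that the interaction terms are all finite and the rearrangement of terms is legitimate. If some perimeter is infinite, the identity should be read with the convention $+\infty$ on both sides, and the inequality is then trivial; I would mention this case briefly rather than belabor it.
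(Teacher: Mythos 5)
Your proposal is correct and follows essentially the same route as the paper: decompose $A\cup B$ into the disjoint pieces $A\setminus B$, $B\setminus A$, $A\cap B$, apply Proposition~\ref{launion01}(2) (noting the pairwise unions recover $A$ and $B$), and then Proposition~\ref{launion01}(1) to $(A\setminus B)\cup(B\setminus A)$ to produce the term $-2L_m(A\setminus B,B\setminus A)$. Your extra expansions of $P_m(A)$ and $P_m(B)$ merely introduce interaction terms that cancel, and your remark on the finiteness hypotheses is a sensible precaution the paper leaves implicit.
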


\begin{proof}
By Proposition~\ref{launion01}\ {\it 2}, $$\begin{array}{l}
\displaystyle
P_m(A\cup B)=P_m\big((A\setminus B)\cup (B\setminus A)\cup (A\cap B)\big)\\ \\
\displaystyle
\phantom{P_m(A\cup B)}=P_m\big((A\setminus B)\cup (B\setminus A)\big)+P_m(A)+P_m(B)\\ \\
\displaystyle
\phantom{P_m(A\cup B)=}
-P_m(A\setminus B)-P_m(B\setminus A)-P_m(A\cap B).
\end{array}
$$
Hence,
$$
\begin{array}{l}
\displaystyle P_m(A\cup B)+P_m(A\cap B)=P_m(A)+P_m(B)\\ \\
\displaystyle
\phantom{P_m(A\cup B)+P_m(A\cap B)=} +P_m\big((A\setminus B)\cup (B\setminus A)\big)-P_m(A\setminus B)-P_m(B\setminus A).
\end{array}$$
Now, by Proposition~\ref{launion01}\ {\it 1.},
$$P_m\big((A\setminus B)\cup (B\setminus A)\big)-P_m(A\setminus B)-P_m(B\setminus A ) = - 2L_m(A\setminus B, B\setminus A).$$
\end{proof}

\subsection{$m$-Total Variation}
 Associated with the  random walk $m=(m_x)$ and the invariant measure $\nu$,  we define the space
 $$BV_m(X):= \left\{ u :X \to \R \ \hbox{ $\nu$-measurable} \, : \, \int_{X}  \int_{X}  \vert u(y) - u(x) \vert dm_x(y) d\nu(x) < \infty \right\}.$$
 We have that $L^1(X,\nu)\subset BV_m(X)$. The {\it $m$-total variation} of a function $u\in BV_m(X)$ is defined by
 $$TV_m(u):= \frac{1}{2} \int_{X}  \int_{X}  \vert u(y) - u(x) \vert dm_x(y) d\nu(x).$$
 Note that
 $$P_m(E) = TV_m(\1_E).$$

Recall the definition of the generalized product measure $\nu \otimes m_x$ (see, for instance, \cite{AFP}), which is defined as the measure in $X \times X$ given by
 $$ \nu \otimes m_x(U) := \int_X   \int_X \1_{U}(x,y) dm_x(y)   d\nu(x)\quad\hbox{for }  U\in \mathcal{B}(X\times X),$$  where one needs the map $x \mapsto m_x(E)$ to be $\nu$-measurable for any Borel set $E \in \mathcal{B}(X)$.
 It holds that $$\int_{X \times X} g  d(\nu \otimes m_x)   = \int_X   \int_X g(x,y) dm_x(y)  d\nu(x)$$
 for every  $g\in L^1(X\times X,\nu\otimes m_x)$. Therefore, we can write
$$TV_m(u)= \frac{1}{2} \int_{X}  \int_{X}  \vert u(y) - u(x) \vert d(\nu \otimes m_x)(x,y).$$

\begin{example}\label{exammplee}{\rm Let $[V(G), d_G, (m^G_x)]$  be the metric random walk space given in Example   \ref{JJ} (\ref{dom003}) with invariant measure $\nu_G$. Then
$$TV_{m^G} (u) = \frac{1}{2} \int_{V(G)}  \int_{V(G)}  \vert u(y) - u(x) \vert dm^G_x(y) d\nu_G(x) $$ $$= \frac{1}{2} \int_{V(G)} \frac{1}{d_x} \left(\sum_{y \in V(G)} \vert u(y) - u(x) \vert w_{xy}\right) d\nu_G(x)$$ $$=  \frac{1}{2} \sum_{x \in V(G)} d_x \left(\frac{1}{d_x} \sum_{y \in V(G)} \vert u(y) - u(x) \vert w_{xy}\right) = \frac{1}{2}  \sum_{x \in V(G)} \sum_{y \in V(G)} \vert u(y) - u(x) \vert w_{xy}.$$
 Note that $TV_{m^G} (u)$ coincides with the anisotropic total variation $TV^1_a(u)$ defined in \cite{GGOB}.
}
\end{example}

The next results are proved in \cite{MST1}.
\begin{lemma}[\cite{MST1}]\label{semicont} $TV_m$ is lower semi-continuous with respect to the the weak convergence in $L^2(X, \nu)$.
\end{lemma}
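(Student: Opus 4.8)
The statement to prove is that $TV_m$ is lower semicontinuous with respect to weak convergence in $L^2(X,\nu)$. The natural strategy is to exhibit $TV_m$ as a supremum of continuous linear functionals (or at least of weakly continuous functionals), since any pointwise supremum of weakly continuous functions is weakly lower semicontinuous. First I would recall that, by definition, $TV_m(u)=\tfrac12\int_X\int_X|u(y)-u(x)|\,d(\nu\otimes m_x)(x,y)$, i.e.\ $TV_m(u)=\tfrac12\|u(\cdot)-u(\cdot)\|_{L^1(X\times X,\nu\otimes m_x)}$ applied to the antisymmetric ``difference'' function $(x,y)\mapsto u(y)-u(x)$.

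The cleanest route is Fatou's lemma together with a subsequence-extraction argument. Suppose $u_n\rightharpoonup u$ weakly in $L^2(X,\nu)$. Set $\ell:=\liminf_n TV_m(u_n)$; if $\ell=+\infty$ there is nothing to prove, so assume $\ell<\infty$ and pass to a subsequence (not relabeled) along which $TV_m(u_n)\to\ell$. The obstacle is that weak $L^2$ convergence does not give pointwise a.e.\ convergence, so one cannot apply Fatou directly to $|u_n(y)-u_n(x)|$. To get around this I would use Mazur's lemma: a sequence of convex combinations $v_n=\sum_{k\ge n}\alpha_k^{(n)}u_k$ converges \emph{strongly} in $L^2(X,\nu)$ to $u$, and since the $v_n$ are convex combinations of the $u_k$ with $k\ge n$, convexity of $TV_m$ gives $TV_m(v_n)\le\sup_{k\ge n}TV_m(u_k)\to\ell$. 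Now $v_n\to u$ strongly in $L^2(X,\nu)$, hence (along a further subsequence) $v_n\to u$ $\nu$-a.e., and therefore $v_n(y)-v_n(x)\to u(y)-u(x)$ for $(\nu\otimes m_x)$-a.e.\ $(x,y)$; here one uses that $\nu$ is reversible, so a $\nu$-null set $N$ pulls back to a $\nu\otimes m_x$-null set via either coordinate projection (for the $y$-coordinate this is exactly the invariance/reversibility identity $\int_X m_x(N)\,d\nu(x)=\nu(N)=0$). Applying Fatou's lemma to $|v_n(y)-v_n(x)|\ge 0$ then yields
\[
TV_m(u)=\frac12\int_{X\times X}|u(y)-u(x)|\,d(\nu\otimes m_x)\le\frac12\liminf_n\int_{X\times X}|v_n(y)-v_n(x)|\,d(\nu\otimes m_x)=\liminf_n TV_m(v_n)\le\ell,
\]
which is the desired inequality.

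The main obstacle, as noted, is converting weak $L^2$ convergence into something usable inside the integral defining $TV_m$; Mazur's lemma plus convexity of $TV_m$ (which is immediate from the triangle inequality, since $u\mapsto|u(y)-u(x)|$ is convex and the integral of convex functions is convex) is what resolves it. An alternative that avoids Mazur entirely is the dual formulation: write $TV_m(u)=\sup\bigl\{\int_{X\times X}(u(y)-u(x))\,\phi(x,y)\,d(\nu\otimes m_x):\phi\in L^\infty(X\times X,\nu\otimes m_x),\ \|\phi\|_\infty\le\tfrac12\bigr\}$ (and one may even restrict to antisymmetric $\phi$); for each fixed bounded $\phi$ the map $u\mapsto\int_{X\times X}(u(y)-u(x))\phi(x,y)\,d(\nu\otimes m_x)=\int_X u(x)\bigl(\int_X\phi(y,x)\,dm_x(y)-\int_X\phi(x,y)\,dm_x(y)\bigr)d\nu(x)$ is a bounded linear functional on $L^2(X,\nu)$ (the kernel in parentheses is in $L^\infty(X,\nu)\subset$ the dual $L^2$-weight sense, using $\nu$-$\sigma$-finiteness), hence weakly continuous, so $TV_m$ is a supremum of weakly continuous functionals and therefore weakly lower semicontinuous. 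I would present the Mazur-lemma argument as the primary proof since it needs only convexity and the elementary measure-theoretic bookkeeping coming from reversibility of $\nu$, and mention the duality argument as a remark.
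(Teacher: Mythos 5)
Your Mazur-lemma argument is correct and complete: convexity of $TV_m$ is immediate from the triangle inequality, Mazur converts the weak limit into a strong (hence, along a subsequence, $\nu$-a.e.) limit of convex combinations whose $TV_m$-values are controlled by $\liminf_n TV_m(u_n)$, and your transfer of the $\nu$-null exceptional set to a $(\nu\otimes m_x)$-null set via $\int_X m_x(N)\,d\nu(x)=\nu(N)$ is exactly the point where invariance of $\nu$ is needed, so Fatou closes the argument. Note, however, that the paper does not prove this lemma at all; it is quoted from \cite{MST1}, and the proof there goes through the dual representation of $TV_m$ (the analogue of \eqref{NForm1}, i.e.\ $TV_m(u)=\sup\{\int_X u\,{\rm div}_m\z\,d\nu:\z\in X_m^2(X),\ \Vert\z\Vert_\infty\le1\}$), which exhibits $TV_m$ as a supremum of weakly continuous linear functionals --- precisely your ``alternative'' remark. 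So your primary route is genuinely different and more elementary (it needs only convexity plus strong lower semicontinuity, not the representation formula), while the duality route is shorter once \eqref{NForm1} is available. One small caution about your alternative: the boundedness on $L^2(X,\nu)$ of $u\mapsto\int_X u(x)\bigl(\int_X\phi(y,x)\,dm_x(y)-\int_X\phi(x,y)\,dm_x(y)\bigr)d\nu(x)$ does not follow from $\sigma$-finiteness alone, since $L^\infty(X,\nu)\not\subset L^2(X,\nu)$ in general; you either need $\nu(X)<\infty$ (which the paper assumes from Section 2 on, where the lemma is used) or you must restrict to $\z\in X_m^2(X)$, where ${\rm div}_m\z\in L^2(X,\nu)$ is built into the definition.
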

\begin{theorem}[\bf Coarea formula, \cite{MST1}]\label{1coarea1}
   For any $u \in L^1(X,\nu)$, let $E_t(u):= \{ x \in X \, : \, u(x) > t \}$. Then,
\begin{equation}\label{coaerea}
TV_m(u) = \int_{-\infty}^{+\infty} P_m(E_t(u))\, dt.
\end{equation}
\end{theorem}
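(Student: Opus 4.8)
The plan is to reduce the identity to a pointwise ``layer-cake'' formula for the absolute value and then interchange the order of integration via Tonelli's theorem. The starting point is the elementary observation that for every pair of real numbers $a,b$ one has
\begin{equation*}
|b-a| = \int_{-\infty}^{+\infty} \big| \1_{\{b>t\}} - \1_{\{a>t\}} \big| \, dt,
\end{equation*}
since $\1_{\{b>t\}}-\1_{\{a>t\}}$ vanishes unless $t$ lies between $a$ and $b$, where it equals $\pm 1$, so the integral measures the length of the interval $[\min(a,b),\max(a,b))$, namely $|b-a|$. Applying this with $a=u(x)$, $b=u(y)$ gives, for all $x,y\in X$,
\begin{equation*}
|u(y)-u(x)| = \int_{-\infty}^{+\infty} \big| \1_{E_t(u)}(y) - \1_{E_t(u)}(x) \big| \, dt ,
\end{equation*}
recalling that $\1_{E_t(u)}(x)=1$ if and only if $u(x)>t$.

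Next I would insert this identity into the definition of $TV_m(u)$:
\begin{equation*}
TV_m(u) = \frac12 \int_X \int_X \left( \int_{-\infty}^{+\infty} \big| \1_{E_t(u)}(y) - \1_{E_t(u)}(x) \big|\, dt \right) dm_x(y)\, d\nu(x),
\end{equation*}
and then invoke Tonelli's theorem to move the $dt$-integration to the outside. Since the integrand is nonnegative this interchange is legitimate once joint measurability is established, and after the swap the inner double integral is precisely $\frac12\int_X\int_X |\1_{E_t(u)}(y)-\1_{E_t(u)}(x)|\,dm_x(y)\,d\nu(x)$, which equals $P_m(E_t(u))$ by formula \eqref{secondf}. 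This yields exactly \eqref{coaerea}.

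The step requiring care is the measurability needed for Tonelli. Since $u$ is $\nu$-measurable, the set $\{(x,t)\in X\times\R : u(x)>t\}$ is $(\nu\otimes\mathcal{L}^1)$-measurable, hence $(x,t)\mapsto \1_{E_t(u)}(x)$ and likewise $(y,t)\mapsto \1_{E_t(u)}(y)$ are jointly measurable; so $(x,y,t)\mapsto |\1_{E_t(u)}(y)-\1_{E_t(u)}(x)|$ is measurable with respect to the relevant product structure on $X\times X\times\R$ built from the generalized product measure $\nu\otimes m_x$ and $\mathcal{L}^1$ (recall that by invariance of $\nu$ the map $x\mapsto m_x(A)$ is $\nu$-measurable for every $\nu$-measurable $A$, in particular for $A=E_t(u)$, so $\nu\otimes m_x$ is well defined and the relevant sections are measurable). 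Granting this, Tonelli applies directly and the proof is complete; no integrability hypothesis on $u$ beyond $u\in L^1(X,\nu)$ is needed, as every term in sight is nonnegative and the equality holds in $[0,+\infty]$.
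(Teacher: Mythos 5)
Your proof is correct and follows the standard layer-cake argument (the pointwise identity $|b-a|=\int_{-\infty}^{+\infty}|\1_{\{b>t\}}-\1_{\{a>t\}}|\,dt$, Tonelli, and the identification of the inner double integral with $P_m(E_t(u))$ via \eqref{secondf}), which is exactly the approach of the proof the paper defers to in \cite{MST1}. The measurability discussion and the remark that the identity holds in $[0,+\infty]$ without further integrability hypotheses are both sound.
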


\subsection{The $1$-Laplacian   in MRW Spaces}\label{1Lpalce}

Given a function $u : X \rightarrow \R$ we define its nonlocal gradient $\nabla u: X \times X \rightarrow \R$ as
$$\nabla u (x,y):= u(y) - u(x) \quad \forall \, x,y \in X.$$
For a function $\z : X \times X \rightarrow \R$, its {\it $m$-divergence} ${\rm div}_m \z : X \rightarrow \R$ is defined as
 $$({\rm div}_m \z)(x):= \frac12 \int_{X} (\z(x,y) - \z(y,x)) dm_x(y).$$

For $p \geq 1$, we define the space
$$X_m^p(X):= \left\{ \z \in L^\infty(X\times X, \nu \otimes m_x) \ : \ {\rm div}_m \z \in L^p(X,\nu) \right\}.$$

Let $u \in BV_m(X) \cap L^{p'}(X,\nu)$ and $\z \in X_m^p(X)$,  $1\le p\le \infty$, having in mind that $\nu$ is reversible, we have the following {\it Green's formula}
\begin{equation}\label{Green}
\int_{X} u(x) ({\rm div}_m \z)(x) dx = -\frac12 \int_{X \times X} \nabla u(x,y) \z(x,y)   d\nu\otimes dm_x.
\end{equation}

Let us denote by
$\hbox{sign}(r)$ the multivalued sign function
$${\rm sign}(u)(x):=  \left\{ \begin{array}{lll} 1 \quad \quad &\hbox{if} \ \  u(x) > 0, \\ -1 \quad \quad &\hbox{if} \ \ u(x) < 0, \\ \left[-1,1\right] \quad \quad &\hbox{if} \ \ \ u(x) = 0. \end{array}\right.$$

The functional $\mathcal{F}_m : L^2(X, \nu) \rightarrow ]-\infty, + \infty]$ defined by
$$\mathcal{F}_m(u):= \left\{ \begin{array}{ll} \displaystyle
TV_m(u)
 \quad &\hbox{if} \ u\in L^2(X,\nu)\cap BV_m(X), \\[10pt] + \infty \quad &\hbox{if } u\in L^2(X,\nu)\setminus BV_m(X), \end{array} \right.$$
 is convex and lower semi-continuous.
The following characterizations for its subdifferential  have been proved in~\cite{MST1}.

\begin{theorem}[\cite{MST1}]\label{chsubd} Let $u \in  L^2(X,\nu)$ and $v \in L^2(X,\nu)$. The following assertions are equivalent:

\item[ (i)] $v \in \partial \mathcal{F}_m (u)$;

\item[ (ii)] there   exists  $\z  \in X_m^2(X)$, $\Vert \z \Vert_\infty \leq 1$ such that
\begin{equation}\label{eqq2}
   v = - {\rm div}_m \z
\end{equation}
and
\begin{equation}\label{eqq1}
\int_{X} u(x) v(x) d\nu(x) = \mathcal{F}_m (u);
\end{equation}

\item[ (iii)] there exists $\z  \in X_m^2(X)$, $\Vert \z \Vert_\infty \leq 1$ such that \eqref{eqq2} holds and
\begin{equation}\label{eqq3}
\mathcal{F}_m (u) = \frac12\int_{X \times X} \nabla u(x,y) \z(x,y) d\nu\otimes dm_x;
\end{equation}

\item[ (iv)] there exists $\hbox{\bf g}\in L^\infty(X\times X, \nu \otimes m_x)$ antisymmetric with $\Vert \hbox{\bf g} \Vert_\infty \leq 1$
        such that
    \begin{equation}\label{1-lapla.var-ver}
-\int_{X}\g(x,y)\,dm_x(y)=  v(x) \quad \nu-\mbox{a.e }x\in X,
\end{equation}
         and
     \begin{equation}\label{1-lapla.sign}-\int_{X} \int_{X}\hbox{\bf g}(x,y)dm_x(y)\,u(x)d\nu(x)=\mathcal{F}_m(u).
     \end{equation}

     \item[ (v)] there exists $\hbox{\bf g}\in L^\infty(X\times X, \nu \otimes m_x)$ antisymmetric with $\Vert \hbox{\bf g} \Vert_\infty \leq 1$ verifying \eqref{1-lapla.var-ver} and
         \begin{equation}\label{1-lapla.sign2}\hbox{\bf g}(x,y) \in {\rm sign}(u(y) - u(x)) \quad (\nu \otimes m_x)-a.e. \ (x,y) \in X \times X.
     \end{equation}
\end{theorem}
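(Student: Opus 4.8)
The plan is to establish the substantive equivalence $(i)\Leftrightarrow(ii)$ and to deduce $(ii)\Leftrightarrow(iii)\Leftrightarrow(iv)\Leftrightarrow(v)$ from it by bookkeeping with Green's formula \eqref{Green} and the reversibility of $\nu$. For $(ii)\Leftrightarrow(iii)$: the finiteness of $\mathcal{F}_m(u)$ forces $u\in BV_m(X)\cap L^2(X,\nu)$, so Green's formula applies to $u$ and $\z\in X_m^2(X)$ and turns $\int_X u\,v\,d\nu=-\int_X u\,({\rm div}_m\z)\,d\nu$ into $\tfrac12\int_{X\times X}\nabla u\,\z\,d\nu\otimes dm_x$; hence \eqref{eqq1} and \eqref{eqq3} are the same statement. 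For $(iii)\Leftrightarrow(iv)$: given $\z$, take the antisymmetric part $\g(x,y):=\tfrac12\big(\z(x,y)-\z(y,x)\big)$, which satisfies $\Vert\g\Vert_\infty\le\Vert\z\Vert_\infty\le1$ and ${\rm div}_m\g={\rm div}_m\z$, so \eqref{1-lapla.var-ver} becomes \eqref{eqq2}; conversely, an antisymmetric $\g$ obeying \eqref{1-lapla.var-ver} already lies in $X_m^2(X)$ since ${\rm div}_m\g=-v\in L^2(X,\nu)$, so one takes $\z=\g$. Reversibility gives $\int_{X\times X}\nabla u\,\z\,d\nu\otimes dm_x=\int_{X\times X}\nabla u\,\g\,d\nu\otimes dm_x$, and Green's formula then rewrites this as $-2\int_X u(x)\big(\int_X\g(x,y)\,dm_x(y)\big)\,d\nu(x)$, matching \eqref{eqq3} with \eqref{1-lapla.sign}. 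For $(iv)\Leftrightarrow(v)$: the same identity turns \eqref{1-lapla.sign} into $\tfrac12\int_{X\times X}\g(x,y)\nabla u(x,y)\,d\nu\otimes dm_x=TV_m(u)=\tfrac12\int_{X\times X}|\nabla u(x,y)|\,d\nu\otimes dm_x$, and since $\Vert\g\Vert_\infty\le1$ the integrand $|\nabla u|-\g\,\nabla u$ is nonnegative, hence vanishes $(\nu\otimes m_x)$-a.e.; that is exactly \eqref{1-lapla.sign2}, and reading the computation backwards yields the converse.

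For $(ii)\Rightarrow(i)$: if $w\in L^2(X,\nu)\setminus BV_m(X)$ the subgradient inequality is trivial since $\mathcal{F}_m(w)=+\infty$, and if $w\in BV_m(X)\cap L^2(X,\nu)$ then Green's formula together with $\Vert\z\Vert_\infty\le1$ gives $\int_X v\,w\,d\nu=\tfrac12\int_{X\times X}\nabla w\,\z\,d\nu\otimes dm_x\le TV_m(w)=\mathcal{F}_m(w)$; combining this with \eqref{eqq1} yields $\mathcal{F}_m(w)\ge\mathcal{F}_m(u)+\int_X v\,(w-u)\,d\nu$ for every $w\in L^2(X,\nu)$, that is $v\in\partial\mathcal{F}_m(u)$.

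The core is $(i)\Rightarrow(ii)$. From $v\in\partial\mathcal{F}_m(u)$ and the positive $1$-homogeneity of $\mathcal{F}_m=TV_m$ one first extracts $\int_X v\,w\,d\nu\le\mathcal{F}_m(w)$ for all $w\in L^2(X,\nu)$ (combine the subgradient inequality at $u+w$ with the subadditivity of $\mathcal{F}_m$) and $\int_X u\,v\,d\nu=\mathcal{F}_m(u)$ (test the subgradient inequality with $w=\lambda u$ and let $\lambda\to1^{\pm}$); the latter is \eqref{eqq1}. To produce $\z$, I would define on the subspace $\{\nabla w:w\in L^1(X,\nu)\cap L^2(X,\nu)\}$ of $L^1(X\times X,\nu\otimes m_x)$ the linear functional $\ell(\nabla w):=\int_X v\,w\,d\nu$; it is well defined (if $\nabla w=0$ then $\mathcal{F}_m(\pm w)=0$, hence $\int_X v\,w\,d\nu=0$) and bounded, $|\ell(\nabla w)|\le TV_m(w)=\tfrac12\Vert\nabla w\Vert_{L^1(\nu\otimes m_x)}$. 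By Hahn--Banach, together with the duality $L^1(X\times X,\nu\otimes m_x)^*=L^\infty(X\times X,\nu\otimes m_x)$ ($\nu\otimes m_x$ being $\sigma$-finite), there exists $\z$ with $\Vert\z\Vert_\infty\le1$ such that $\int_X v\,w\,d\nu=\tfrac12\int_{X\times X}\nabla w\,\z\,d\nu\otimes dm_x$ for all $w\in L^1(X,\nu)\cap L^2(X,\nu)$. Writing $\nabla w(x,y)=w(y)-w(x)$ and using the invariance and reversibility of $\nu$ to justify the resulting Fubini interchanges (legitimate because $\Vert\z\Vert_\infty\le1$ and $w\in L^1(X,\nu)$), the right-hand side equals $-\int_X w\,({\rm div}_m\z)\,d\nu$, with ${\rm div}_m\z\in L^\infty(X,\nu)$. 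Hence $\int_X w\,(v+{\rm div}_m\z)\,d\nu=0$ for all $w\in L^1(X,\nu)\cap L^2(X,\nu)$, so testing against indicators of $\nu$-finite sets forces $v+{\rm div}_m\z=0$ $\nu$-a.e.; therefore ${\rm div}_m\z=-v\in L^2(X,\nu)$, that is $\z\in X_m^2(X)$, \eqref{eqq2} holds, and with \eqref{eqq1} this gives $(ii)$.

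I expect the main obstacle to be precisely this last step: the field $\z$ furnished by Hahn--Banach is bounded by $1$ but a priori only has a bounded $m$-divergence, and one has to upgrade ${\rm div}_m\z$ to $L^2(X,\nu)$. This is where the $\sigma$-finiteness of the space and the invariance and reversibility of $\nu$ are used, the latter being what makes $\tfrac12\int_{X\times X}\nabla w\,\z\,d\nu\otimes dm_x=-\int_X w\,({\rm div}_m\z)\,d\nu$ hold without any a priori integrability of ${\rm div}_m\z$. Everything else is formal once these reversibility identities are in place.
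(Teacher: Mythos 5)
Your argument is correct, but note that the paper itself gives no proof of this theorem --- it is quoted verbatim from \cite{MST1} --- so the comparison is really with the proof in that reference, which proceeds along essentially the same lines as yours: the Hahn--Banach plus $L^1$--$L^\infty$ duality construction of $\z$ for $(i)\Rightarrow(ii)$, Green's formula and the reversibility of $\nu$ (i.e.\ the symmetry of $\nu\otimes m_x$ under $(x,y)\mapsto(y,x)$) for the bookkeeping among $(ii)$--$(iv)$, and the saturation of the pointwise inequality $\g(x,y)\,\nabla u(x,y)\le|\nabla u(x,y)|$ to obtain $\g\in{\rm sign}(u(y)-u(x))$ in $(v)$. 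The only steps you compress --- that $\nu\otimes m_x$ is $\sigma$-finite and swap-symmetric, and that testing against indicators of finite-measure sets identifies ${\rm div}_m\z=-v$ --- are exactly the routine verifications needed, and you invoke them correctly.
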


 \begin{remark}{\rm
The next space was introduced in \cite{Meyer} in the continuous setting. Set
$$G_m(X):= \{ f \in L^2(X, \nu) \ : \ \exists \z \in X^2_m(X)  \ \hbox{such that} \ f = {\rm div}_m(\z)\}$$
and consider in $G_m(X)$ the norm
$$\Vert f \Vert_{m,*} := \inf\{\Vert \z \Vert_\infty \ : \ f = {\rm div}_m(\z)\}.$$
From the proof of~\cite[Theorem 3.3]{MST1} we have that
 \begin{equation}\label{meyer1}
 \Vert f \Vert_{m,*} = \sup \left\{\left\vert \int_X f(x) u(x) d\nu(x) \right\vert \ : u \in L^2(X, \nu) , \ TV_m(u) \leq 1\right\},
 \end{equation}
and
\begin{equation}\label{perfect1}
\displaystyle
\partial \mathcal{F}_m (u)
   = \left\{ v \in G_m(X)  \, : \, \Vert v \Vert_{m,*} \leq 1, \ \  \int_{X} u(x) v(x) d\nu(x) = \mathcal{F}_m(u)\right\}.
\end{equation}
In particular,
\begin{equation}\label{porfin1}\partial  \mathcal{F}_m (0) = \{v \in G_m(X) \, : \, \Vert v \Vert_{m,*} \leq 1 \}.\end{equation}
}
\end{remark}

\begin{definition}\label{el1laplac}{\rm
We define the multivalued operator $\Delta^m_1$ in $L^2(X,\nu)$ by $(u, v ) \in \Delta^m_1$ if, and only if, $-v \in \partial \mathcal{F}_m(u)$.
}
\end{definition}

  It is easy to see that $-\Delta^m_1$ is a completely accretive operator (see~\cite{BCr2}).

 Chang, in \cite{Chang1}, and  Hein and B\"uhler, in \cite{HB}, define  a similar operator in the particular case of finite graphs.

\medskip

 We finish this section by recalling some concepts   studied in~\cite{MST1}   that will be used later.
\begin{definition}\label{Ndefeigenpair}{\rm
A pair $(\lambda, u) \in \R \times L^2(X, \nu)$ is called an {\it $m$-eigenpair} of the $1$-Laplacian $\Delta^m_1$ on $X$ if $\Vert u \Vert_1 = 1$ and    there exists $\xi \in {\rm sign}(u)$   such that
\begin{equation}\label{deff1}
\lambda \, \xi \in    - \Delta^m_1 u.
\end{equation}
The constant $\lambda$ is called {\it $m$-eigenvalue} and the function $u$ is an {\it $m$-eigenfunction} associated to $\lambda$.
}
\end{definition}

Given a function $u : X \rightarrow \R$,   $\mu \in \R$ is a {\it median} of $u$ with respect to the measure $\nu$ if
$$\nu(\{ x \in X \ : \ u(x) < \mu \}) \leq \frac{1}{2} \nu(X), \quad \nu(\{ x \in X \ : \ u(x) > \mu \}) \leq \frac{1}{2} \nu(X).$$
We denote by ${\rm med}_\nu (u)$ the set of all medians of $u$. It is easy to see that

\begin{equation}\label{NNsigg1}{\rm argmin}\left\{\int_X|f-c| \ : \ c\in \R \right\}={\rm med}_\nu(f)\end{equation}
and
\begin{equation}\label{Nsigg1}
0 \in {\rm med}_\nu (u) \iff \exists \xi \in {\rm sign}(u) \ \hbox{such that} \ \int_X \xi(x) d \nu(x) = 0.
\end{equation}

Given a  set $\Omega \subset X$ with    $0 < \nu(\Omega) < \nu(X)$, we   denote
$$\lambda^m_\Omega:= \frac{P_m(\Omega)}{\nu(\Omega )},$$
and  we define the {\it $m$-Cheeger constant} of $\Omega$ by
\begin{equation}\label{cheeg1}h_1^m(\Omega) := \inf \left\{ \lambda^m_E \, : \, E \subset \Omega, \ E \ \hbox{$\nu$-measurable and }  \,   \nu( E)>0 \right\}.\end{equation}
A $\nu$-measurable set $E  \subset \Omega$ achieving the infimum in \eqref{cheeg1} is said to be an {\it $m$-Cheeger set} of $\Omega$. Furthermore, we say that $\Omega$ is {\it $m$-calibrable} if it is an $m$-Cheeger set of itself, that is, if $h_1^m(\Omega) = \lambda^m_\Omega.$

\medskip

 \begin{definition}\label{ergodef}{\rm
 A Borel set $B \subset X$ is said to be {\it invariant} with respect to the random walk $m$ if $m_x(B) = 1$ whenever $x\in B$.
 An invariant  probability measure~$\nu$ is said to be {\it ergodic} with respect to $m$ if $\nu(B) = 0$ or $\nu(B) = 1$ for every invariant set $B$ with respect to the random walk $m$.
}
\end{definition}

If $\nu$ is ergodic with respect to $m$ then (see~\cite{MST1})
 $$TV_m(u) = 0 \iff u \ \hbox{is $\nu$-a.e. equal to a constant}.$$

From now on we will assume that $\nu$ is finite and that $\frac{1}{\nu(X)}\nu$ is ergodic with respect to $m$. See~\cite{MST1} for characterizations of the ergodicity.

\section{The Rudin-Osher-Fatemi Model in MRW Spaces}\label{sect2}

\subsection{The $m$-ROF Model}

We consider the following   ROF-problem
 \begin{equation}\label{1RWROFmod}
\min \left\{ TV_m(u) + \frac{\lambda}{2} \int_X  \vert u(x) - f(x) \vert^2 d \nu(x)  : \ u \in L^2(X, \nu)  \right\},
\end{equation}
for $f\in L^2(X,\nu)$  and $\lambda>0$.
 We start by proving existence and uniqueness of the minimizer for problem \eqref{1RWROFmod} and a characterization.

  Let us write
  $$\mathcal{G}_m(u,f,\lambda) := TV_m(u) + \frac{\lambda}{2} \Vert u- f \Vert^2_{L^2(X, \nu)}.$$

 \begin{theorem}\label{ExistUniqmin} For any $f \in L^2(X, \nu)$ and $\lambda>0$, there exists a unique minimizer $u_\lambda$ of problem~\eqref{1RWROFmod}. Moreover, $u_\lambda$ is the unique solution of the  problem
 \begin{equation}\label{Ecuation1}
   \lambda (u - f) \in  \Delta_1^m (u).
 \end{equation}
Consequently, $u_\lambda \in L^2(X, \nu)$ is the solution of problem \eqref{1RWROFmod} if, and only if,  there exists $\hbox{\bf g}\in L^\infty(X\times X, \nu \otimes m_x)$ antisymmetric with $\Vert \hbox{\bf g} \Vert_\infty \leq 1$, such that
  \begin{equation}\label{trras01}\lambda (u_\lambda - f) =  {\rm div}_m \g
  \end{equation}
  and
        $$\g(x,y) \in {\rm sign}(u_\lambda(y) - u_\lambda(x)) \quad (\nu \otimes m_x)-a.e. \ (x,y) \in X \times X.$$
 \end{theorem}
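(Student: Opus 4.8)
The plan is to follow the standard convex-analysis route: the functional $\mathcal{G}_m(\cdot,f,\lambda)$ is the sum of $\mathcal{F}_m$ (convex, lower semi-continuous by Lemma~\ref{semicont}) and the strictly convex, continuous term $\frac{\lambda}{2}\Vert\cdot-f\Vert_{L^2(X,\nu)}^2$. First I would establish \emph{existence and uniqueness} of the minimizer. Uniqueness is immediate from strict convexity of the $L^2$-term (any two minimizers would give a strictly smaller value at their midpoint unless they coincide). For existence, I would take a minimizing sequence $(u_n)$; since $\frac{\lambda}{2}\Vert u_n-f\Vert_2^2$ is bounded along the sequence, $(u_n)$ is bounded in $L^2(X,\nu)$, so up to a subsequence $u_n\rightharpoonup u_\lambda$ weakly in $L^2(X,\nu)$; then $\Vert u_\lambda-f\Vert_2^2\le\liminf\Vert u_n-f\Vert_2^2$ by weak lower semicontinuity of the norm and $TV_m(u_\lambda)\le\liminf TV_m(u_n)$ by Lemma~\ref{semicont}, so $u_\lambda$ is a minimizer.

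Next I would identify the minimizer via the \emph{Euler--Lagrange equation}. Since $\mathcal{F}_m$ and $\Psi(u):=\frac{\lambda}{2}\Vert u-f\Vert_2^2$ are convex and $\Psi$ is everywhere finite and continuous on $L^2(X,\nu)$, the subdifferential sum rule applies: $u_\lambda$ minimizes $\mathcal{G}_m$ if and only if $0\in\partial\mathcal{F}_m(u_\lambda)+\partial\Psi(u_\lambda)$. Since $\Psi$ is Fr\'echet differentiable with $\partial\Psi(u)=\{\lambda(u-f)\}$, this reads $-\lambda(u_\lambda-f)\in\partial\mathcal{F}_m(u_\lambda)$, i.e. $\lambda(u_\lambda-f)\in-\partial\mathcal{F}_m(u_\lambda)=\Delta_1^m(u_\lambda)$ by Definition~\ref{el1laplac}; this is exactly \eqref{Ecuation1}. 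Conversely any solution of \eqref{Ecuation1} is a minimizer by reversing the argument, and since the minimizer is unique, so is the solution of \eqref{Ecuation1}.

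Finally, the pointwise characterization follows by invoking Theorem~\ref{chsubd}: applying the equivalence (i)$\iff$(v) with $v:=-\lambda(u_\lambda-f)$ (noting $v\in L^2(X,\nu)$), the relation $-\lambda(u_\lambda-f)\in\partial\mathcal{F}_m(u_\lambda)$ holds if and only if there is an antisymmetric $\g\in L^\infty(X\times X,\nu\otimes m_x)$ with $\Vert\g\Vert_\infty\le1$ satisfying $-\int_X\g(x,y)\,dm_x(y)=-\lambda(u_\lambda(x)-f(x))$ a.e., i.e. $\int_X\g(x,y)\,dm_x(y)=\lambda(u_\lambda(x)-f(x))$, together with $\g(x,y)\in{\rm sign}(u_\lambda(y)-u_\lambda(x))$ $(\nu\otimes m_x)$-a.e. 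Rewriting $\int_X\g(x,y)\,dm_x(y)$ in terms of ${\rm div}_m\g$: since $\g$ is antisymmetric, $({\rm div}_m\g)(x)=\frac12\int_X(\g(x,y)-\g(y,x))\,dm_x(y)=\int_X\g(x,y)\,dm_x(y)$, so the first condition becomes exactly \eqref{trras01}. This yields the stated equivalence.

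The main obstacle is really a matter of carefully invoking the right hypotheses for the subdifferential sum rule (finiteness/continuity of the quadratic perturbation suffices, so it is mild here) and, more substantively, making sure the translation between the two equivalent descriptions of $-\partial\mathcal{F}_m$ in Theorem~\ref{chsubd} is bookkept correctly — in particular the antisymmetry of $\g$ is what makes ${\rm div}_m\g$ collapse to the simple integral $\int_X\g(x,y)\,dm_x(y)$, and this identity is the linchpin connecting \eqref{1-lapla.var-ver} to \eqref{trras01}. Everything else is routine once existence, uniqueness, and the Euler--Lagrange inclusion are in place.
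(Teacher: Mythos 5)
Your proposal is correct and follows essentially the same route as the paper: direct method with weak $L^2$ compactness plus lower semicontinuity (Lemma~\ref{semicont}) for existence, strict convexity for uniqueness, the subdifferential sum rule (the paper cites \cite[Corollary 2.11]{Brezis}) for the Euler--Lagrange inclusion, and Theorem~\ref{chsubd} for the pointwise characterization. Your sign bookkeeping and the observation that antisymmetry reduces ${\rm div}_m\g$ to $\int_X\g(x,y)\,dm_x(y)$ are both accurate and consistent with the paper's conventions.
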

  \begin{proof} Let $\{ u_n \}_{n\in N}$ be a minimizing sequence of problem \eqref{1RWROFmod}.
  Then,
  $$\alpha:= \inf \left\{ \mathcal{G}_m(u,f,\lambda) \ : \ u \in L^2(X, \nu) \right\} = \lim_{n \to \infty} \mathcal{G}_m(u_n,f,\lambda). $$
Since
  $$\Vert u_n \Vert^2_{L^2(X, \nu)} \leq 2 \left(\Vert u_n- f \Vert^2_{L^2(X, \nu)} + \Vert  f \Vert^2_{L^2(X, \nu)} \right)\leq 2 \left( \frac{2}{\lambda} \mathcal{G}_m(u_n,f,\lambda) + \Vert  f \Vert^2_{L^2(X, \nu)} \right),$$
  we have that $\{u_n\}_n$ is bounded in $L^2(X,\nu)$, and we can assume that, up to a subsequence,
  $$ u_n \to u_\lambda \quad \hbox{weakly in} \ L^2(X, \nu).$$
   Therefore, by the lower semi-continuity of the $L^2$-norm with respect to the   weak convergence and Lemma \ref{semicont}, we have that
   $$\mathcal{G}_m(u_\lambda,f,\lambda) \leq \liminf_{n \to \infty} \mathcal{G}_m(u_n,f,\lambda) = \alpha,$$
   and, consequently, $u_\lambda$ is a minimizer of problem \eqref{1RWROFmod}. Uniqueness follows from the strict convexity of $\Vert \cdot \Vert_{L^2(X, \nu)}^2$ and the convexity of $TV_m$.

   Since $u_\lambda$ is a minimizer of  problem \eqref{1RWROFmod}, we have that $0 \in \partial \mathcal{G}_m(u_\lambda,f,\lambda)$. Now, if $\Phi(u):= \frac{\lambda}{2} \Vert u- f \Vert^2_{L^2(X, \nu)}$, then, by  \cite[Corollary 2.11]{Brezis}, we have that
   $$\partial \mathcal{G}_m(u,f,\lambda) = \partial \mathcal{F}_m(u) + \partial \Phi(u),$$ thus
   $$0 \in \partial \mathcal{G}_m(u_\lambda,f,\lambda) = \partial \mathcal{F}_m(u_\lambda) + \lambda (u_\lambda - f),$$
   which gives~\eqref{Ecuation1}.
   Then, the characterization of $u_\lambda$  follows  from Theorem \ref{chsubd}.
\end{proof}

 Observe that, on account of~\eqref{perfect1}, we have that $ u_\lambda$ is the solution of problem \eqref{1RWROFmod} if, and only if,
\begin{equation}\label{ply002}
\left\{\begin{array}{l}
\displaystyle f-u_\lambda\in G_m(X),\\[8pt]
\Vert f-u_\lambda\Vert_{m^*}\le \frac{1}{\lambda} \ \hbox{ and}
\\[8pt] \displaystyle
   \lambda\int_X(f-u_\lambda)u_\lambda d\nu=TV_m(u_\lambda).
\end{array}\right.
\end{equation}

As a consequence of this we have:
\begin{proposition} Let $f \in L^2(X, \nu)$ and $\lambda >0$. Then,
 $u_\lambda =0$ is the solution of problem \eqref{1RWROFmod} if, and only if, $$f\in G_m(X)\quad\hbox{and}\quad
 \Vert f \Vert_{m,*} \leq \frac{1}{\lambda}.$$

 For $f\in G_m(X)$, if $\Vert f \Vert_{m,*} >\frac{1}{\lambda}$  then $u_\lambda$ is characterized by the following two conditions
\begin{equation}\label{porfin2}
\Vert f - u_\lambda\Vert_{m,*} = \frac{1}{\lambda}
\end{equation}
and
\begin{equation}\label{porfin2desdoblada}
 \lambda\int_X (f - u_\lambda) u_\lambda d \nu = TV_m(u_\lambda).
\end{equation}
\end{proposition}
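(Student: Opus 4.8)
The plan is to read both assertions directly off the optimality characterization \eqref{ply002} of the $m$-ROF minimizer, combined with the dual-norm formula \eqref{meyer1}, the description \eqref{porfin1} of $\partial\mathcal F_m(0)$, and the standing ergodicity assumption; no fresh variational argument is needed. For the first equivalence I would specialize \eqref{ply002} to the candidate $u_\lambda=0$: its first two lines become exactly $f\in G_m(X)$ and $\Vert f\Vert_{m,*}\le\frac1\lambda$, while its third line reads $\lambda\int_X f\cdot 0\,d\nu=TV_m(0)$, i.e. the trivial identity $0=0$. Since \eqref{ply002} characterizes the unique minimizer of \eqref{1RWROFmod}, the function $0$ is that minimizer if and only if $f\in G_m(X)$ and $\Vert f\Vert_{m,*}\le\frac1\lambda$. (Equivalently, $0$ is a minimizer iff $0\in\partial\mathcal G_m(0,f,\lambda)=\partial\mathcal F_m(0)-\lambda f$, i.e. $\lambda f\in\partial\mathcal F_m(0)$, and one concludes by \eqref{porfin1} and the positive homogeneity of $\Vert\cdot\Vert_{m,*}$ together with the linearity of $G_m(X)$.)

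For the second statement, assume $f\in G_m(X)$ and $\Vert f\Vert_{m,*}>\frac1\lambda$; by the first part $u_\lambda\neq0$. Equation \eqref{porfin2desdoblada} is literally the third line of \eqref{ply002}, hence it holds, and the first line of \eqref{ply002} gives $f-u_\lambda\in G_m(X)$ together with $\Vert f-u_\lambda\Vert_{m,*}\le\frac1\lambda$; so all that remains is to upgrade this inequality to an equality. Suppose, for contradiction, that $\Vert f-u_\lambda\Vert_{m,*}<\frac1\lambda$. I would first record the pairing bound
\[
\int_X g\,u\,d\nu\ \le\ \Vert g\Vert_{m,*}\,TV_m(u)\qquad\text{for every } g\in G_m(X),\ u\in L^2(X,\nu),
\]
which follows from \eqref{meyer1} by normalizing $u$ by $TV_m(u)$ when $TV_m(u)>0$ and, when $TV_m(u)=0$, from the fact that $u$ is then $\nu$-a.e.\ constant (ergodicity) while $\int_X g\,d\nu=0$ for $g\in G_m(X)$ (Green's formula \eqref{Green} with test function $1$). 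Applying this with $g=f-u_\lambda$ and $u=u_\lambda$, and using \eqref{porfin2desdoblada}, yields
\[
TV_m(u_\lambda)=\lambda\int_X(f-u_\lambda)u_\lambda\,d\nu\le\lambda\,\Vert f-u_\lambda\Vert_{m,*}\,TV_m(u_\lambda)<TV_m(u_\lambda)
\]
whenever $TV_m(u_\lambda)>0$, which is absurd; hence $TV_m(u_\lambda)=0$, so $u_\lambda$ equals a constant $c$ $\nu$-a.e. Then $c\,\nu(X)=\int_X u_\lambda\,d\nu=\int_X f\,d\nu-\int_X(f-u_\lambda)\,d\nu=0$, because both $f$ and $f-u_\lambda$ belong to $G_m(X)$, so $u_\lambda=0$, contradicting $u_\lambda\neq0$. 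Therefore $\Vert f-u_\lambda\Vert_{m,*}=\frac1\lambda$, which is \eqref{porfin2}.

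The bookkeeping with \eqref{ply002} is routine; the one step that needs care is the pairing inequality $\int_X g\,u\,d\nu\le\Vert g\Vert_{m,*}TV_m(u)$ on the set $\{TV_m(u)=0\}$, where the ergodicity hypothesis and the vanishing of $\int_X g\,d\nu$ on $G_m(X)$ are precisely what make the estimate—and the consequent forcing of $u_\lambda$ down to $0$—work. This is the only place where something beyond a direct reading of the already-established characterizations is required.
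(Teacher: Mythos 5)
Your proof is correct and follows essentially the same route as the paper: both parts are read off the characterization \eqref{ply002}, and the equality $\Vert f-u_\lambda\Vert_{m,*}=\frac1\lambda$ is forced by the pairing bound coming from the dual formula \eqref{meyer1} applied to $g=f-u_\lambda$ and $u=u_\lambda$. The only difference is cosmetic: you argue by contradiction and explicitly dispose of the case $TV_m(u_\lambda)=0$ via ergodicity and the vanishing mean of elements of $G_m(X)$, a point the paper passes over by simply asserting $u_\lambda\not\equiv 0$ before dividing by $TV_m(u_\lambda)$; your version is, if anything, the more careful one.
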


\begin{proof}
  The first part follows from~\eqref{ply002}. Let   $f\in G_m(X)$ with $\Vert f \Vert_{m,*} >\frac{1}{\lambda}$.  From~\eqref{ply002} again,
      $$\Vert \lambda (f - u_\lambda) \Vert_{m,*} \leq 1 \quad \hbox{and} \quad \lambda\int_X (f -u_\lambda) u_\lambda d \nu = TV_m(u_\lambda).$$
      Now, since $\Vert f \Vert_{m,*} >\frac{1}{\lambda}$, we know that $u_\lambda \not\equiv 0$, thus, by \eqref{meyer1}, we have
      $$\Vert \lambda (f - u_\lambda) \Vert_{m,*}  \geq \frac{ \lambda}{TV_m(u_\lambda)} \int_X (f - u_\lambda) u_\lambda d \nu = 1.$$
      Therefore,
      $$\Vert   f - u_\lambda  \Vert_{m,*}  = \frac{1}{\lambda},$$
      and we conclude  the proof.
\end{proof}

The   $m$-ROF-model  leads to the $(BV,L^2)$-decomposition
\begin{equation}\label{BV2decompN}
f = u_\lambda + v_\lambda \ , \quad [u_\lambda , v_\lambda] = \displaystyle{\rm argmin} \left\{ TV_m(u) + \frac{\lambda}{2} \Vert v \Vert_2^2  \ : \ f=u+v \right\} .
\end{equation}
 Then, from the previous results, we can rewrite:
 \begin{corollary} Let $f \in L^2(X, \nu)$ and $\lambda >0$. For the $(BV,L^2)$-decomposition $[u_\lambda , v_\lambda]$  of~$f$, we have:

 \item[ (i)] $v_\lambda\in G_m(X)$, $\Vert v_\lambda\Vert_{m^*}\le\frac{1}{\lambda}$ \ and \ $\displaystyle\lambda\int_Xv_\lambda u_\lambda d\nu=TV_m(u_\lambda)$.

 \item[ (ii)]   $u_\lambda =0$ if, and only if,  $v_\lambda = f$.

 \item[ (iii)] For $f\in G_m(X)$,  if  $\Vert f \Vert_{m,*} > \frac{1}{\lambda}$ then
 $$\Vert v_\lambda \Vert_{m,*} = \frac{1}{\lambda}\quad \hbox{and} \quad  \lambda\int_X v_\lambda u_\lambda d \nu = TV_m( u_\lambda).$$
 \end{corollary}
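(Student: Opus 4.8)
The plan is to deduce all three items directly from the characterization of the $m$-ROF minimizer obtained above, using the trivial reparametrization of the constrained problem \eqref{BV2decompN} by $v=f-u$. First I would observe that the constraint $f=u+v$ simply forces $v=f-u$, so the objective in \eqref{BV2decompN} equals $TV_m(u)+\frac{\lambda}{2}\Vert f-u\Vert_2^2=\mathcal{G}_m(u,f,\lambda)$; hence the pair $[u_\lambda,v_\lambda]$ realizing the argmin in \eqref{BV2decompN} is precisely the minimizer $u_\lambda$ of \eqref{1RWROFmod} provided by Theorem~\ref{ExistUniqmin}, together with $v_\lambda=f-u_\lambda$. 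Existence and uniqueness of $[u_\lambda,v_\lambda]$ then come for free from Theorem~\ref{ExistUniqmin}.

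For (i) I would substitute $v_\lambda=f-u_\lambda$ into the three conditions of \eqref{ply002}: the membership $f-u_\lambda\in G_m(X)$ becomes $v_\lambda\in G_m(X)$, the bound $\Vert f-u_\lambda\Vert_{m,*}\le\frac{1}{\lambda}$ becomes $\Vert v_\lambda\Vert_{m,*}\le\frac{1}{\lambda}$, and the optimality identity $\lambda\int_X(f-u_\lambda)u_\lambda\,d\nu=TV_m(u_\lambda)$ becomes the stated relation $\lambda\int_X v_\lambda u_\lambda\,d\nu=TV_m(u_\lambda)$. Item (ii) is immediate from $v_\lambda=f-u_\lambda$, since then $u_\lambda=0$ if and only if $v_\lambda=f$; one may additionally invoke the first part of the preceding Proposition to record that this happens exactly when $f\in G_m(X)$ with $\Vert f\Vert_{m,*}\le\frac1\lambda$. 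For (iii), under the hypotheses $f\in G_m(X)$ and $\Vert f\Vert_{m,*}>\frac1\lambda$, I would quote \eqref{porfin2} and \eqref{porfin2desdoblada} from the preceding Proposition and again replace $f-u_\lambda$ by $v_\lambda$, obtaining $\Vert v_\lambda\Vert_{m,*}=\frac1\lambda$ and $\lambda\int_X v_\lambda u_\lambda\,d\nu=TV_m(u_\lambda)$.

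There is essentially no obstacle here: the statement is a bookkeeping corollary of \eqref{ply002} and of the preceding Proposition, the only (entirely routine) point being the identification of the constrained decomposition problem \eqref{BV2decompN} with the unconstrained problem \eqref{1RWROFmod} under $v=f-u$. If one wished to be self-contained, one could instead re-derive the conditions in (i) directly from $0\in\partial\mathcal{G}_m(u_\lambda,f,\lambda)$ together with the characterization \eqref{perfect1} of $\partial\mathcal{F}_m$, but reusing \eqref{ply002} is the shortest route.
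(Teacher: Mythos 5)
Your proposal is correct and follows exactly the route the paper intends: the paper states this corollary with no separate proof beyond the phrase ``from the previous results, we can rewrite,'' meaning precisely the substitution $v_\lambda=f-u_\lambda$ into \eqref{ply002} for (i), the trivial equivalence $u_\lambda=0\iff v_\lambda=f$ for (ii), and \eqref{porfin2}--\eqref{porfin2desdoblada} for (iii). Nothing is missing.
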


\begin{remark}{\rm
(i) If $\lambda$ is too small then the regularization term $TV_m(u)$ is
excessively penalized and the image is over-smoothed, resulting in a loss of information in the
reconstructed image. On the other hand, if $\lambda$ is too large then the reconstructed image is
under-regularized and noise is left in the reconstruction.

\item[ (ii)]
 In~\cite{TNV1} and \cite{TNV2},  Tadmor,   Nezzar and Vese propose a multiscale decomposition in order to overcome the difficulties observed in the previous point. In this regard, the space of functions $G_m(X)$ is of particular interest, since, as we have seen in Corollary \ref{BV2decompN}, after a first decomposition $[u_\lambda, v_\lambda]$ the function $v_\lambda$ is a function of $G_m(X)$ which in turn can be decomposed. The multiscale decomposition takes advantage of this fact by taking an increasing sequence of scales $\lambda_i$ tending to infinity and inductively applying the $(BV,L^2)$-decomposition with scale parameter $\lambda_{i+1}$ to $v_{\lambda_i}$ so that after $k$-steps we have
$$f=\sum_{i=1}^k u_{\lambda_i}+v_{\lambda_k},\quad\Vert v_{\lambda_k}\Vert_{m^*}\le\frac{1}{\lambda_k}.$$
}\end{remark}

  Integrating both sides of \eqref{trras01} over $X$ and using Green's formula \eqref{Green}, with $u=1$ and $\z=\g$, on the right-hand side we get:
 \begin{proposition}\label{tau001}
If  $u_\lambda$ is the unique minimizer of problem \eqref{1RWROFmod} with  noisy image  $f$ then
$$\int_X u_\lambda(x) d\nu(x) = \int_X f(x) d\nu(x).$$
 \end{proposition}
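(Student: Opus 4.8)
The plan is to start from the characterization of $u_\lambda$ given in Theorem~\ref{ExistUniqmin}: there exists an antisymmetric $\g \in L^\infty(X\times X,\nu\otimes m_x)$ with $\Vert\g\Vert_\infty\le 1$ such that $\lambda(u_\lambda - f) = {\rm div}_m\,\g$ holds $\nu$-a.e. on $X$. Integrating this identity over $X$ with respect to $\nu$ gives
\[
\lambda \int_X (u_\lambda(x) - f(x))\, d\nu(x) = \int_X ({\rm div}_m\,\g)(x)\, d\nu(x),
\]
so it suffices to show that the right-hand side vanishes.

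To see this, I would apply Green's formula~\eqref{Green} with the choices $u \equiv 1$ and $\z = \g$. Note that $u\equiv 1$ belongs to $BV_m(X)\cap L^{p'}(X,\nu)$ since $\nu$ is finite (here $\nabla u \equiv 0$), and $\g \in X_m^p(X)$ because $\g\in L^\infty$ and ${\rm div}_m\,\g = \lambda(u_\lambda - f)\in L^2(X,\nu)\subset L^1(X,\nu)$ (again using $\nu$ finite). Then Green's formula yields
\[
\int_X 1\cdot ({\rm div}_m\,\g)(x)\, d\nu(x) = -\frac12 \int_{X\times X} \nabla 1(x,y)\,\g(x,y)\, d\nu\otimes dm_x = 0,
\]
since $\nabla 1(x,y) = 1 - 1 = 0$. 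Combining this with the integrated equation gives $\lambda\int_X (u_\lambda - f)\,d\nu = 0$, and dividing by $\lambda > 0$ gives the claim.

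Alternatively, one can bypass Green's formula and argue directly from the definition of ${\rm div}_m$ together with the reversibility of $\nu$: $\int_X ({\rm div}_m\,\g)(x)\,d\nu(x) = \frac12\int_X\int_X (\g(x,y) - \g(y,x))\,dm_x(y)\,d\nu(x)$, and by the detailed balance condition $dm_x(y)d\nu(x) = dm_y(x)d\nu(y)$ the term $\int_X\int_X \g(y,x)\,dm_x(y)\,d\nu(x)$ equals $\int_X\int_X \g(x,y)\,dm_x(y)\,d\nu(x)$ after relabeling, so the integral is zero; one may also simply use that $\g$ is antisymmetric. I do not expect any real obstacle here — the only points requiring a word of care are checking that $\mathbf 1$ and $\g$ lie in the function classes for which Green's formula~\eqref{Green} was stated, both of which follow immediately from the standing assumption that $\nu$ is finite.
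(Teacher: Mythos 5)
Your proof is correct and follows the paper's own argument exactly: the paper also integrates \eqref{trras01} over $X$ and applies Green's formula \eqref{Green} with $u=1$ and $\z=\g$, so that the right-hand side vanishes because $\nabla 1\equiv 0$. Your additional checks of the hypotheses of Green's formula and the alternative direct computation via antisymmetry are fine but not needed beyond what the paper does.
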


 Furthermore, we have the following Maximum Principle.
 \begin{proposition}\label{MP1} If $[u_{i,\lambda} , v_{i,\lambda}]$  is  the $(BV,L^2)$-decomposition of $f_i$, $i=1,2$, then
 \begin{equation}\label{dom1001}
\Vert (u_{1,\lambda}- u_{2,\lambda})^+ \Vert_2 \leq  \Vert (f_1-f_2)^+  \Vert_2.
\end{equation}
 In particular, for  $c,\, C \in \R$, if $c \leq f \leq C$ $\nu$-a.e., and $[u_\lambda , v_\lambda]$ is  the $(BV,L^2)$-decomposition of $f$, then
 $$c \leq u_\lambda \leq C \quad \nu-a.e.$$
  \end{proposition}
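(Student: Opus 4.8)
The plan is to prove the contraction estimate \eqref{dom1001} first and then derive the $L^\infty$-bound as a simple consequence. The starting point is the Euler--Lagrange characterization from Theorem \ref{ExistUniqmin}: for $i=1,2$ there exists an antisymmetric $\g_i\in L^\infty(X\times X,\nu\otimes m_x)$ with $\Vert\g_i\Vert_\infty\le 1$ such that $\lambda(u_{i,\lambda}-f_i)={\rm div}_m\g_i$ and $\g_i(x,y)\in{\rm sign}(u_{i,\lambda}(y)-u_{i,\lambda}(x))$ $(\nu\otimes m_x)$-a.e. Subtracting, we get
\[
\lambda\big((u_{1,\lambda}-u_{2,\lambda})-(f_1-f_2)\big)={\rm div}_m(\g_1-\g_2).
\]
Now I would test this identity against $(u_{1,\lambda}-u_{2,\lambda})^+$, which lies in $L^2(X,\nu)\cap BV_m(X)$ (indeed $BV_m(X)$ is stable under taking positive parts and truncations, as follows from the coarea formula Theorem \ref{1coarea1}, and boundedness is inherited since $u_{i,\lambda}\in L^2$). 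Applying Green's formula \eqref{Green},
\[
\lambda\int_X\big((u_{1,\lambda}-u_{2,\lambda})-(f_1-f_2)\big)(u_{1,\lambda}-u_{2,\lambda})^+\,d\nu
=-\frac12\int_{X\times X}\nabla(u_{1,\lambda}-u_{2,\lambda})^+(x,y)\,(\g_1-\g_2)(x,y)\,d\nu\otimes dm_x.
\]

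The crux is to show the right-hand side is $\le 0$, i.e. that $-\Delta_1^m$ is accretive in a form compatible with the positive part (this is the "completely accretive" property mentioned after Definition \ref{el1laplac}, but I would give the direct argument). Fix $(x,y)$ and set $a=u_{1,\lambda}(y)-u_{1,\lambda}(x)$, $b=u_{2,\lambda}(y)-u_{2,\lambda}(x)$; note $\nabla(u_{1,\lambda}-u_{2,\lambda})^+(x,y)=(u_{1,\lambda}-u_{2,\lambda})^+(y)-(u_{1,\lambda}-u_{2,\lambda})^+(x)$. One checks by a short case analysis on the signs of $(u_{1,\lambda}-u_{2,\lambda})(x)$ and $(u_{1,\lambda}-u_{2,\lambda})(y)$ that whenever this increment is nonzero it has the same sign as $a-b$ for any selections $\g_1(x,y)\in{\rm sign}(a)$, $\g_2(x,y)\in{\rm sign}(b)$; more precisely the product $\big((u_{1,\lambda}-u_{2,\lambda})^+(y)-(u_{1,\lambda}-u_{2,\lambda})^+(x)\big)(\g_1(x,y)-\g_2(x,y))\ge 0$ pointwise. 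Hence the right-hand side above is $\le 0$, so
\[
\lambda\int_X (u_{1,\lambda}-u_{2,\lambda})(u_{1,\lambda}-u_{2,\lambda})^+\,d\nu\le\lambda\int_X (f_1-f_2)(u_{1,\lambda}-u_{2,\lambda})^+\,d\nu,
\]
i.e. $\Vert(u_{1,\lambda}-u_{2,\lambda})^+\Vert_2^2\le\int_X(f_1-f_2)(u_{1,\lambda}-u_{2,\lambda})^+\,d\nu\le\Vert(f_1-f_2)^+\Vert_2\,\Vert(u_{1,\lambda}-u_{2,\lambda})^+\Vert_2$ by Cauchy--Schwarz (using $(f_1-f_2)\le(f_1-f_2)^+$ on the set where the test function is positive), which gives \eqref{dom1001} after dividing.

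For the second assertion: if $c\le f\le C$ $\nu$-a.e., take $f_1=f$, $f_2\equiv C$. Since constants have $TV_m=0$ and $\Delta_1^m$-characterization gives that the $(BV,L^2)$-decomposition of the constant $C$ is $u_{2,\lambda}=C$ (the minimizer of $TV_m(u)+\frac\lambda2\Vert u-C\Vert_2^2$ is clearly $u\equiv C$), \eqref{dom1001} yields $\Vert(u_\lambda-C)^+\Vert_2\le\Vert(f-C)^+\Vert_2=0$, so $u_\lambda\le C$ $\nu$-a.e.; symmetrically, taking $f_2\equiv c$ and using the same estimate with the roles reversed gives $u_\lambda\ge c$ $\nu$-a.e. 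The main obstacle is the pointwise sign lemma for the $(\g_1-\g_2)$ term; everything else is a routine application of Green's formula and Cauchy--Schwarz.
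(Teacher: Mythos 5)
Your argument is correct. The paper disposes of \eqref{dom1001} in one line: since $\lambda(u_{i,\lambda}-f_i)\in\Delta_1^m(u_{i,\lambda})$, the estimate is quoted as a direct consequence of the complete accretivity of $-\Delta_1^m$ (asserted after Definition \ref{el1laplac} with a reference to B\'enilan--Crandall), and the second assertion is then obtained, exactly as you do, from the fact that $[k,0]$ is the $(BV,L^2)$-decomposition of a constant $k$. What you do differently is open the black box: your Green's-formula computation against $(u_{1,\lambda}-u_{2,\lambda})^+$, combined with the pointwise inequality $\big((u_{1,\lambda}-u_{2,\lambda})^+(y)-(u_{1,\lambda}-u_{2,\lambda})^+(x)\big)\big(\g_1(x,y)-\g_2(x,y)\big)\ge 0$ coming from the monotonicity of the sign graph, is precisely the verification that $\partial\mathcal{F}_m$ is completely accretive, specialized to the test function $p=(\cdot)^+$ (strictly speaking, to the smooth truncations in $P_0$ that approximate it). Your sign lemma is right, and the integrability bookkeeping is fine since $\nu$ is finite, so $(u_{1,\lambda}-u_{2,\lambda})^+\in L^2(X,\nu)\subset L^1(X,\nu)\subset BV_m(X)$ and ${\rm div}_m(\g_1-\g_2)=\lambda\big((u_{1,\lambda}-u_{2,\lambda})-(f_1-f_2)\big)\in L^2(X,\nu)$, which is what \eqref{Green} requires. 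The trade-off is that the paper's abstract argument delivers \eqref{dom1001} in every $L^q$-norm simultaneously, whereas your Cauchy--Schwarz step is tied to $q=2$ (all that is claimed here); also, the final division needs the harmless caveat $\Vert(u_{1,\lambda}-u_{2,\lambda})^+\Vert_2\neq 0$. Both routes are sound, and yours has the merit of being self-contained.
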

 \begin{proof}
  Since $   \lambda ( u_{i,\lambda}-f_i) \in     \Delta_1^m(u_{i,\lambda})$, $i=1,2$,~\eqref{dom1001} is a direct consequence of the  complete accretivity of $ -\Delta_1^m$.

  The second part follows from~\eqref{dom1001}  and the fact  that, for a constant $k$,  $[k , 0]$ is  the $(BV,L^2)$-decomposition of $f=k$.
   \end{proof}

\begin{remark}\label{calib001}{\rm For the local ROF problem  in $\mathbb{R}^N$
 it is well known that if $f=\1_{B_r(0)}$ then the solution is given by
$$u_\lambda=\left\{
\begin{array}{ll}
0,&\hbox{for }0\le \lambda\le\frac{1}{2r},\\[8pt]
\left(1-\frac{1}{2r\lambda}\right)\1_{B_r(0)},&\hbox{for }\lambda >\frac{1}{2r}.
\end{array}
\right.
$$
For the  $m$-ROF model studied here, where the ambient space has finite measure, there does not exist a solution of the form $c\1_\Omega$ for $f=\1_\Omega$, whatever non-null measurable set $\Omega$, $\nu(\Omega)<\nu(X)$, is chosen. Indeed, if such a solution exists then, by
Proposition~\ref{tau001}, we would have $c=1$. Hence, by
Theorem \ref{ExistUniqmin}, we have that
$$0\in\Delta_1^m \1_\Omega,$$
which is impossible since we are assuming  ergodicity.
However, we can have a solution of the form $c\1_\Omega$ for particular functions $f=\1_\Omega+h\1_{X\setminus\Omega}$, where $\Omega$ is an $m$-calibrable set and $h$ satisfies
$$\int_{X\setminus\Omega}hd\nu=-\frac{1}{\lambda}P_m(\Omega).$$
Indeed, in this case we need to solve
    $$\lambda(c-1)\1_\Omega-\lambda h(x)\1_{X\setminus\Omega}\in\Delta_1^m \1_\Omega,$$
 and this implies (see~\cite[Remarks 5.6 \& 5.10]{MST1}) that: $c=1-\frac{\lambda_\Omega^m}{\lambda}$,   $\Omega$ is $m$-calibrable, and
$$\int_{X\setminus\Omega}hd\nu=-\frac{1}{\lambda}P_m(\Omega).$$
This is possible for $f(x)=\1_\Omega(x)-\frac{1}{\lambda}m_x(\Omega)\1_{X\setminus\Omega}(x)$.
}
\end{remark}

In the next result we construct a solution  of \eqref{1RWROFmod} for $f= b \overline{u}$, where $\overline{u}$ is a solution of $- u\in  \Delta_1^m u$.  Observe that, in this case, $\displaystyle \int_Xfd\nu=0$.

\begin{proposition} Let $\lambda , b > 0$. If $\overline{u}\in L^2(X,\nu)$ is a solution of
\begin{equation}\label{1Lpac1}
 - u \in  \Delta_1^m u,
\end{equation}
then
$u_\lambda=\left(b - \frac{1}{\lambda}\right)^+\overline{u}$ is a solution  of \eqref{1RWROFmod} with $f= b \overline{u}$. Conversely, if $\left(b - \frac{1}{\lambda}\right)\overline{u}$ is a solution of~\eqref{1RWROFmod} with $f= b \overline{u}$,   then $\overline{u}$ is a solution of \eqref{1Lpac1}.

\end{proposition}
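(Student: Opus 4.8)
The plan is to use the characterization of solutions of the $m$-ROF model in terms of the $1$-Laplacian given by Theorem~\ref{ExistUniqmin}, namely that $u_\lambda$ solves \eqref{1RWROFmod} with datum $f$ if and only if $\lambda(u_\lambda - f) \in \Delta_1^m(u_\lambda)$, together with homogeneity properties of the operator $\Delta_1^m$. First I would record the key scaling fact: since $\mathcal{F}_m = TV_m$ is positively homogeneous of degree $1$ on $L^2(X,\nu)\cap BV_m(X)$, its subdifferential is $0$-homogeneous, i.e. $\partial\mathcal{F}_m(tw) = \partial\mathcal{F}_m(w)$ for every $t>0$; equivalently, $\Delta_1^m(tw) = \Delta_1^m(w)$ for $t>0$. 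Hence from $-\overline u \in \Delta_1^m \overline u$ (which is \eqref{1Lpac1}) we get, for any $t>0$, that $-\overline u \in \Delta_1^m(t\overline u)$, and therefore $-s\overline u \in \Delta_1^m(t\overline u)$ for every $s>0$ as well, because $\Delta_1^m(t\overline u) = \partial\mathcal{F}_m(t\overline u)$ is a convex cone's face — more carefully, $\partial\mathcal{F}_m(t\overline u)$ is a convex set and $0$ lies in it only in degenerate cases, but the ray structure I actually need is just that $-\overline u\in\partial\mathcal{F}_m(t\overline u)$ implies $-s\overline u\in\partial\mathcal{F}_m(t\overline u)$ whenever this stays consistent with \eqref{eqq1}; I will instead argue directly via the equation, see below.

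For the forward direction, set $c := \left(b - \tfrac1\lambda\right)^+$ and $u_\lambda := c\,\overline u$. If $b \le \tfrac1\lambda$ then $c=0$ and I must check $u_\lambda = 0$ solves \eqref{1RWROFmod} with $f = b\overline u$; by the Proposition preceding \eqref{porfin2} this amounts to $b\overline u \in G_m(X)$ and $\Vert b\overline u\Vert_{m,*}\le \tfrac1\lambda$. From \eqref{1Lpac1} and \eqref{perfect1} we have $-\overline u = \operatorname{div}_m\z$ for some $\z\in X_m^2(X)$ with $\Vert\z\Vert_\infty\le 1$, so $b\overline u = \operatorname{div}_m(-b\z)\in G_m(X)$ and $\Vert b\overline u\Vert_{m,*}\le b \le \tfrac1\lambda$, as needed. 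If $b > \tfrac1\lambda$ then $c = b - \tfrac1\lambda > 0$, and I verify \eqref{Ecuation1}: $\lambda(u_\lambda - f) = \lambda\big((b-\tfrac1\lambda)\overline u - b\overline u\big) = -\overline u$, so I need $-\overline u \in \Delta_1^m(c\overline u) = \Delta_1^m(\overline u)$ using $0$-homogeneity, and this is exactly \eqref{1Lpac1}. By Theorem~\ref{ExistUniqmin} this characterizes the (unique) solution, so $u_\lambda = c\overline u$ is it.

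For the converse, suppose $w := \left(b-\tfrac1\lambda\right)\overline u$ solves \eqref{1RWROFmod} with $f = b\overline u$. Then by Theorem~\ref{ExistUniqmin}, $\lambda(w - b\overline u)\in\Delta_1^m w$, i.e. $\lambda\big((b-\tfrac1\lambda)\overline u - b\overline u\big) = -\overline u \in \Delta_1^m\big((b-\tfrac1\lambda)\overline u\big)$. Here I should note the hypothesis implicitly forces $b \ne \tfrac1\lambda$ (otherwise $w=0$ and the statement about $\overline u$ would be vacuous/false in general), and more importantly I want $b - \tfrac1\lambda > 0$: if $b - \tfrac1\lambda < 0$ then $w = (b-\tfrac1\lambda)\overline u = -|b-\tfrac1\lambda|\overline u$, and $0$-homogeneity of $\Delta_1^m$ gives $-\overline u\in\Delta_1^m(-\overline u) = \Delta_1^m(\overline u)$ as well — actually this still yields \eqref{1Lpac1}, so the conclusion holds regardless of sign. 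In the generic case $b\neq\tfrac1\lambda$, applying $\Delta_1^m(t\overline u) = \Delta_1^m(\overline u)$ for $t = |b-\tfrac1\lambda| > 0$ (and using oddness $\Delta_1^m(-v) = -\Delta_1^m(v)$ if $b<\tfrac1\lambda$, which follows since $TV_m$ is even) converts $-\overline u \in \Delta_1^m\big((b-\tfrac1\lambda)\overline u\big)$ into $-\overline u\in\Delta_1^m\overline u$, which is \eqref{1Lpac1}.

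The main obstacle I anticipate is being careful with the homogeneity and evenness bookkeeping of $\Delta_1^m = -\partial\mathcal{F}_m$: one must confirm that $\partial\mathcal{F}_m(tv) = \partial\mathcal{F}_m(v)$ for $t>0$ (from $1$-homogeneity of $TV_m$) and $\partial\mathcal{F}_m(-v) = -\partial\mathcal{F}_m(v)$ (from $TV_m(-v) = TV_m(v)$), and to handle the threshold $\lambda = \tfrac1b$ and the sign of $b - \tfrac1\lambda$ explicitly rather than sweeping them under the rug. Everything else is a direct substitution into \eqref{Ecuation1} plus the already-established equivalence in Theorem~\ref{ExistUniqmin} and the characterization of the zero solution.
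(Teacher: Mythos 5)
Your proof is correct and follows essentially the same route as the paper: the characterization from Theorem~\ref{ExistUniqmin} together with the $0$-homogeneity $\partial\mathcal{F}_m(t\overline u)=\partial\mathcal{F}_m(\overline u)$ for $t>0$ handles $b>\frac1\lambda$ and the converse, while for $b\le\frac1\lambda$ your use of $\Vert \overline u\Vert_{m,*}\le 1$ and the zero-solution criterion is the paper's argument in different clothing (the paper writes the explicit vector field $\z=\lambda b\g$ instead of invoking the norm $\Vert\cdot\Vert_{m,*}$). One small correction that does not change the outcome: oddness gives $\Delta_1^m(-v)=-\Delta_1^m(v)$ rather than $\Delta_1^m(v)$, so in the converse with $b<\frac1\lambda$ you actually obtain $-\overline u\in\partial\mathcal{F}_m(\overline u)$, which by \eqref{perfect1} forces $\overline u=0$ and hence \eqref{1Lpac1} trivially; your flagging of the degenerate case $b=\frac1\lambda$ is in fact more careful than the paper's own terse treatment of the converse.
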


\begin{proof} Set $a=\left(b - \frac{1}{\lambda}\right)^+$ and let $\overline{u}$ be a solution of \eqref{1Lpac1}. Suppose first that $b > \frac{1}{\lambda}$, so that $a= b - \frac{1}{\lambda}$. Then,
$$\lambda( a \overline{u} - b \overline{u}) = - \overline{u} \in  \Delta_1^m (\overline{u}) = \Delta_1^m (a \overline{u}).$$
Hence, by Theorem \ref{ExistUniqmin}, we have that $a\overline{u}$ is a solution of \eqref{1RWROFmod} with $f= b \overline{u}$. Now, assume that  $b \leq \frac{1}{\lambda}$, so that $a= 0$. Since $\overline{u}$ is a solution of \eqref{1Lpac1}, there exists $\hbox{\bf g}\in L^\infty(X\times X, \nu \otimes m_x)$ antisymmetric with $\Vert \hbox{\bf g} \Vert_\infty \leq 1$, such that
  $$ - {\rm div}_m \g = \overline{u}$$
  and
        $$\g(x,y) \in {\rm sign}(\overline{u}(y) - \overline{u}(x)) \quad \hbox{for } (\nu \otimes m_x)-a.e. \ (x,y) \in X \times X.$$
        If $\z:= \lambda b \g$, we have that $\Vert \z \Vert_\infty \leq 1$,
         $$ - \frac{1}{\lambda}{\rm div}_m \z = - b \,  {\rm div}_m \g =  b  \overline{u},$$
         and
        $$\z(x,y) \in {\rm sign}(0) \quad \hbox{for } (\nu \otimes m_x)-a.e. \ (x,y) \in X \times X.$$
        Therefore,
        $$ - \lambda b  \overline{u} \in \partial \mathcal{F}_m (0),$$
        and, by Theorem \ref{ExistUniqmin}, we have that $0$ is a solution of \eqref{1RWROFmod} with $f= b \overline{u}$.

        Suppose now that $a\overline{u}$ is a solution of \eqref{1RWROFmod} with $f= b \overline{u}$, and $b - a = \frac{1}{\lambda}$, then, by Theorem \ref{ExistUniqmin}, we have
         $$- \lambda (a\overline{u} - b \overline{u}) \in \partial \mathcal{F}_m (a\overline{u}).$$
         Hence, $\overline{u}$ is a solution of \eqref{1Lpac1}.
\end{proof}

 \begin{remark}\label{implicitR}{\rm In \cite{MST1} we have studied the total variational flow
\begin{equation}\label{TVF1}
u'(t) - \Delta_1^m u(t)\ni 0, \quad u(0) = u_0.
\end{equation}
There is a formal connection between the $m$-ROF-model \eqref{1RWROFmod} and the  total variational flow \eqref{TVF1} that can be drawn as follows. Given the initial datum $u_0$, we consider an implicit time discretization of the TVF \eqref{TVF1} using the following iterative procedure:
\begin{equation}\label{implicit}
   \frac{u_n - u_{n-1}}{\Delta t}  \in \Delta_1^m u_n  \quad n \in \N.
\end{equation}
Identifying the time step $\Delta t$ in \eqref{implicit} with the regularization parameter in \eqref{1RWROFmod}, that is, taking $\lambda = \frac{1}{\Delta t}$, we observe that
each iteration in \eqref{implicit} can be equivalently
approached by solving \eqref{1RWROFmod} (see~\eqref{Ecuation1}), where we take $u = u_n$ and $f= u_{n-1}$.  In  the next Section we discuss how to solve the $m$-ROF model via the gradient descent method.
}
\end{remark}

\subsection{The Gradient Descent Method}\label{sectgdm}\
   As in \cite{ROF},  we can see that problem \eqref{1RWROFmod} is well-posed  by using the gradient descent method.  For this, one needs to solve the Cauchy problem
\begin{equation}\label{ParabROFmodN}
 \left\{ \begin{array}{ll} v_t \in \Delta_1^m v(t)- \lambda (v(t) - f) \quad &\hbox{in} \ (0, T) \times X \\ \\   v(0, x) = v_0(x) \quad &\hbox{in} \ x \in X,\end{array} \right.
 \end{equation}
 with $v_0$ satisfying
 $$\int_\Omega v_0 = \int_\Omega f.
 $$
Now, problem \eqref{ParabROFmodN}  can be rewritten as the following abstract Cauchy problem in $L^2(X,\nu)$:
\begin{equation}\label{ACP2}
v'(t) + \partial \mathcal{G}_m(v(t),f,\lambda) \ni 0, \quad v(0) = v_0.
\end{equation}
 Then, since $\mathcal{G}_m( \cdot,f,\lambda)$ is convex and lower semi-continuous,  by the theory of maximal monotone operators (\cite{Brezis}), we have that, for any initial data $v_0 \in L^2(X,\nu)$, problem \eqref{ACP2} has a unique strong solution. Therefore, if we define a solution of problem  \eqref{ParabROFmodN} as a function $v \in C(0, T; L^2(X,\nu)) \cap W^{1,1}_{loc}(0, T; L^2(X,\nu))$ such that $v(0, x) = v_0(x)$ for $\nu$-a.e. $x \in X$ and satisfying
$$ \lambda(v(t) - f) + v_t(t) \in \Delta_1^m (v(t)) \quad \hbox{for almost every} \ \ t \in (0, T),$$
we have the following existence and uniqueness result.

\begin{theorem}\label{ExistUniqROF}  For every $v_0 \in L^2( X,\nu)$ there exists a unique strong  solution of the Cauchy problem~\eqref{ParabROFmodN} in $(0,T)$ for any $T > 0$.
Moreover, we have the following contraction and maximum principles in any $L^q(X,\nu)$--space, $1\le q\le \infty$:
\begin{equation}\label{contrprin}\Vert (v(t)-w(t))^+\Vert_q\le \Vert (v_0-w_0)^+\Vert_q\quad \forall\, 0<t<T,\end{equation}
for any pair of solutions $v,\, w$ of problem~\eqref{ParabROFmodN} with initial data $v_0$, $w_0$ and noisy images $f$ and $\hat f$, with $f\le \hat f$, respectively.
\end{theorem}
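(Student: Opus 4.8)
The plan is to recognize problem~\eqref{ParabROFmodN} as the gradient flow of the convex lower semi-continuous functional $\mathcal{G}_m(\cdot,f,\lambda)$ on the Hilbert space $L^2(X,\nu)$, and then invoke the Brezis--K\=omura theory of maximal monotone operators. First I would observe that, as already noted in~\eqref{ACP2}, solving \eqref{ParabROFmodN} is equivalent to solving
\begin{equation*}
v'(t)+\partial\mathcal{G}_m(v(t),f,\lambda)\ni 0,\qquad v(0)=v_0,
\end{equation*}
since $\partial\mathcal{G}_m(\cdot,f,\lambda)=\partial\mathcal{F}_m(\cdot)+\lambda(\cdot-f)$ by \cite[Corollary 2.11]{Brezis} (the same sum rule used in the proof of Theorem~\ref{ExistUniqmin}), and $-\Delta_1^m=\partial\mathcal{F}_m$ by Definition~\ref{el1laplac}. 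Because $\mathcal{G}_m(\cdot,f,\lambda)$ is proper, convex and lower semi-continuous on $L^2(X,\nu)$ with $D(\mathcal{G}_m)=L^2(X,\nu)\cap BV_m(X)$ dense in $L^2(X,\nu)$ (recall $L^1(X,\nu)\subset BV_m(X)$ and $\nu$ is finite, so simple functions belong to the domain), $\partial\mathcal{G}_m(\cdot,f,\lambda)$ is maximal monotone and its semigroup theory applies: for every $v_0\in\overline{D(\partial\mathcal{G}_m)}=L^2(X,\nu)$ there is a unique strong solution $v\in C([0,T];L^2(X,\nu))\cap W^{1,1}_{loc}(0,T;L^2(X,\nu))$, in fact $v(t)\in D(\partial\mathcal{G}_m)$ for every $t>0$ and $t\mapsto v(t)$ is locally Lipschitz on $(0,\infty)$. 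This gives existence and uniqueness in the stated solution class.

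For the contraction and comparison estimate~\eqref{contrprin}, the cleanest route is to use the complete accretivity of $-\Delta_1^m$ (noted right after Definition~\ref{el1laplac}, see \cite{BCr2}) together with the observation that the fidelity perturbation $v\mapsto\lambda(v-f)$ preserves this structure. Concretely, given two solutions $v,w$ with data $v_0,w_0$ and noisy images $f\le\hat f$, I would subtract the two evolution inclusions, test against a suitable element of the subdifferential $\partial\|(\cdot)^+\|_q$ (equivalently, use the Crandall--Liggett / completely accretive machinery for the operator $A_f u:=-\Delta_1^m u+\lambda(u-f)$), and use that $-\Delta_1^m$ being completely accretive means $\int_X(\eta(u)-\eta(\hat u))(z-\hat z)\,d\nu\ge 0$ for all $z\in-\Delta_1^m u$, $\hat z\in-\Delta_1^m\hat u$ and all nondecreasing $\eta$ with $\eta(0)=0$; applying this with $\eta=$ (an approximation of) $r\mapsto (r^+)^{q-1}\mathrm{sign}^+$ and using $f\le\hat f$ to discard the good sign of the $-\lambda(f-\hat f)$ term yields
\begin{equation*}
\frac{d}{dt}\,\frac1q\Vert (v(t)-w(t))^+\Vert_q^q\le -\lambda\Vert (v(t)-w(t))^+\Vert_q^q\le 0,
\end{equation*}
which after integration gives even the exponentially decaying bound $\Vert(v(t)-w(t))^+\Vert_q\le e^{-\lambda t}\Vert(v_0-w_0)^+\Vert_q\le\Vert(v_0-w_0)^+\Vert_q$. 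The cases $q=1$ and $q=\infty$ follow by the usual limiting argument (or directly from complete accretivity, which encodes all $L^q$ contractions simultaneously). Taking $w\equiv v$, $v_0=w_0$ but $f\le\hat f$ gives the comparison in $f$; taking $f=\hat f$ gives the contraction.

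The main obstacle is the rigorous justification of the comparison inequality~\eqref{contrprin} across all $q\in[1,\infty]$: the $L^2$-theory of maximal monotone operators gives $L^2$-contraction for free, but the full $L^q$-range requires the completely accretive framework, so I must check carefully that adding the Lipschitz perturbation $\lambda(v-f)$ to the completely accretive operator $-\Delta_1^m$ still produces (an $m$-completely accretive, hence) a completely accretive operator and that the mild/strong solution produced by the Hilbertian theory coincides with the one produced by the Crandall--Liggett exponential formula for which the $L^q$-estimates are transparent. This is standard (perturbation of a completely accretive operator by a Lipschitz, order-preserving map), but it is the step where one has to be precise rather than formal; the existence/uniqueness part is essentially immediate from Brezis' theory once the subdifferential identification is in place.
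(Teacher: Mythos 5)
Your proposal follows essentially the same route as the paper: the paper also rewrites \eqref{ParabROFmodN} as the abstract Cauchy problem \eqref{ACP2} for the subdifferential of the convex, lower semi-continuous functional $\mathcal{G}_m(\cdot,f,\lambda)$ and invokes the Brezis theory of maximal monotone operators for existence and uniqueness, then obtains \eqref{contrprin} from the complete accretivity of $\partial\mathcal{G}_m(\cdot,f,\lambda)$. Your write-up merely supplies more detail on the completely accretive perturbation step (and the comparison in $f\le\hat f$) than the paper, which states this part without proof.
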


Note that the contraction principle \eqref{contrprin} in any $L^q$-space follows from the fact that the operator $\partial \mathcal{G}_m (\cdot,f,\lambda)$ is completely accretive.

\begin{theorem}\label{sab1001}  For  $f \in L^2(X, \nu)$, let $(T_\lambda(t))_{t \geq 0}$ be the semigroup solution of the Cauchy problem~\eqref{ParabROFmodN}. Then, for every $v_0 \in L^2(X, \nu)$, we have
\begin{equation}\label{asymp1}
\Vert T_\lambda(t)v_0 - u_\lambda \Vert_2 \leq \Vert v_0 -   u_\lambda  \Vert_2 \ e^{-\lambda t} \quad \hbox{for all} \ \ t \geq 0,
\end{equation}
where $   u_\lambda $ is the unique minimizer of problem \eqref{1RWROFmod} with this same $f$.
\end{theorem}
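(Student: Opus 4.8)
The plan is to recognize $u_\lambda$ as the (unique) equilibrium of the gradient flow \eqref{ACP2} and to exploit the fact that, although $\mathcal{F}_m$ is only convex, the quadratic fidelity term $\Phi(u):=\frac{\lambda}{2}\Vert u-f\Vert_2^2$ renders $\mathcal{G}_m(\cdot,f,\lambda)$ $\lambda$-strongly convex on $L^2(X,\nu)$; the asserted decay rate $e^{-\lambda t}$ is exactly this modulus of strong convexity. First I would record two facts. (a) By Theorem \ref{ExistUniqmin}, $0\in\partial\mathcal{G}_m(u_\lambda,f,\lambda)$, so the constant curve $t\mapsto u_\lambda$ is the strong solution of \eqref{ACP2} issued from $u_\lambda$; hence $T_\lambda(t)u_\lambda=u_\lambda$ for all $t\ge 0$. (b) The operator $\partial\mathcal{G}_m(\cdot,f,\lambda)$ is strongly monotone with modulus $\lambda$: using $\partial\mathcal{G}_m=\partial\mathcal{F}_m+\partial\Phi$ (as in the proof of Theorem \ref{ExistUniqmin}), if $z_i=\eta_i+\lambda(u_i-f)$ with $\eta_i\in\partial\mathcal{F}_m(u_i)$, $i=1,2$, then monotonicity of $\partial\mathcal{F}_m$ yields
\[
\langle z_1-z_2,\ u_1-u_2\rangle\ \ge\ \lambda\,\Vert u_1-u_2\Vert_2^2 ,
\]
where $\langle\cdot,\cdot\rangle$ is the inner product of $L^2(X,\nu)$.

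Next comes the core estimate. Fix $v_0\in L^2(X,\nu)$, put $v(t):=T_\lambda(t)v_0$ and introduce the Lyapunov function $\phi(t):=\tfrac12\Vert v(t)-u_\lambda\Vert_2^2$. By the regularity theory for gradient flows of convex lower semicontinuous functionals on a Hilbert space (\cite{Brezis}), $v$ is locally absolutely continuous on $(0,\infty)$, $\phi$ is locally absolutely continuous, $-v'(t)\in\partial\mathcal{G}_m(v(t),f,\lambda)$ for a.e.\ $t>0$, and $\phi'(t)=\langle v'(t),\,v(t)-u_\lambda\rangle$ a.e. Combining this with (a) and (b),
\[
\phi'(t)\ =\ -\big\langle -v'(t)-0,\ v(t)-u_\lambda\big\rangle\ \le\ -\lambda\,\Vert v(t)-u_\lambda\Vert_2^2\ =\ -2\lambda\,\phi(t)
\]
for a.e.\ $t>0$. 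Gronwall's inequality then gives $\phi(t)\le\phi(0)\,e^{-2\lambda t}$, and taking square roots produces exactly \eqref{asymp1}.

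The only delicate point — the main obstacle — is the justification of the chain rule for $\phi$ along the orbit together with the identification $-v'(t)\in\partial\mathcal{G}_m(v(t),f,\lambda)$ for a.e.\ $t$: this uses the finer structure of strong solutions of subdifferential evolution equations (right differentiability of $v$ everywhere with $-\tfrac{d^+}{dt}v(t)$ equal to the minimal section of $\partial\mathcal{G}_m(v(t),f,\lambda)$, local absolute continuity of $t\mapsto\mathcal{G}_m(v(t),f,\lambda)$, etc.) rather than merely $v\in W^{1,1}_{\mathrm{loc}}$; for gradient flows of convex lsc functionals this is classical and is already the framework underlying \eqref{ACP2}. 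Alternatively, one can bypass this entirely and work at the level of the nonlinear semigroup: $\mathcal{B}:=\partial\mathcal{G}_m(\cdot,f,\lambda)$ is $\lambda$-strongly accretive (since $\mathcal{B}-\lambda I=\partial\mathcal{F}_m(\cdot)-\lambda f$ is accretive), hence its resolvents satisfy $\Vert(I+s\mathcal{B})^{-1}x-(I+s\mathcal{B})^{-1}y\Vert_2\le(1+s\lambda)^{-1}\Vert x-y\Vert_2$; the Crandall--Liggett exponential formula $T_\lambda(t)x=\lim_{n\to\infty}(I+\tfrac tn\mathcal{B})^{-n}x$ then gives $\Vert T_\lambda(t)x-T_\lambda(t)y\Vert_2\le e^{-\lambda t}\Vert x-y\Vert_2$, and choosing $y=u_\lambda$ with $T_\lambda(t)u_\lambda=u_\lambda$ yields \eqref{asymp1}. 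Everything else in the argument is purely algebraic.
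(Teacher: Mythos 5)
Your proof is correct and follows essentially the same route as the paper: both arguments reduce to the monotonicity of $\partial\mathcal{F}_m$ (equivalently of $-\Delta_1^m$), which yields the differential inequality $\tfrac12\tfrac{d}{dt}\Vert v(t)-u_\lambda\Vert_2^2+\lambda\Vert v(t)-u_\lambda\Vert_2^2\le 0$ and hence \eqref{asymp1} by integration. Your extra care about the chain rule along the orbit, and the alternative Crandall--Liggett resolvent argument, are sound but not needed beyond what the paper already invokes from \cite{Brezis}.
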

\begin{proof}  If $v(t):= T_\lambda(t) v_0$, we have
 \begin{equation}\label{0Ecuation1}   v_t + \lambda (v(t) - f) \in     \Delta_1^m(v(t)),  \end{equation}
  and, by Theorem \ref{ExistUniqmin},
\begin{equation}\label{1Ecuation1}
   \lambda (  u_\lambda - f) \in    \Delta_1^m (   u_\lambda ).
 \end{equation}
Now, since $ -\Delta_1^m$ is a monotone operator in $L^2(X, \nu)$, we get
$$\int_X (v(t) -    u_\lambda )(- v_t - \lambda (v(t) - f) - (  - \lambda (  u_\lambda  - f)) d\nu \geq 0,$$
from where it follows that
$$\frac12 \frac{d}{dt}\int_X (v(t) -  u_\lambda )^2 d\nu + \lambda \int_X (v(t) -  u_\lambda )^2d\nu \leq 0.$$
Then, integrating this ordinary differential inequality, we obtain \eqref{asymp1}.
\end{proof}

\begin{proposition}\label{consermmaas}   Let $(T_\lambda(t))_{t \geq 0}$ be the semigroup solution of the Cauchy problem~\eqref{ParabROFmodN} and let  $v_0 \in L^2(X, \nu)$ satisfying  $\displaystyle \int_X v_0 d\nu =  \int_X f   d\nu$. Then,
$$\int_X T_\lambda(t) v_0 d\nu = \int_X f d\nu \quad \hbox{for all} \ t \geq 0.$$
\end{proposition}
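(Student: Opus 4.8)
The plan is to integrate over $X$ the differential inclusion satisfied by $v(t):=T_\lambda(t)v_0$, mimicking the proof of Proposition~\ref{tau001}. First I would use that, being a strong solution, $v\in C(0,T;L^2(X,\nu))\cap W^{1,1}_{\mathrm{loc}}(0,T;L^2(X,\nu))$ and that for a.e.\ $t\in(0,T)$ one has $v_t(t)+\lambda(v(t)-f)\in\Delta_1^m v(t)$; by Definition~\ref{el1laplac} and Theorem~\ref{chsubd}(ii) this provides, for a.e.\ $t$, a field $\z(t)\in X_m^2(X)$ with $\Vert\z(t)\Vert_\infty\le 1$ such that
\[
v_t(t)+\lambda\big(v(t)-f\big)={\rm div}_m\z(t)\qquad\text{in }L^2(X,\nu).
\]

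Next I would observe that, since $\nu$ is finite, the constant function $1$ belongs to $L^2(X,\nu)\cap BV_m(X)$ (in fact $TV_m(1)=0$), so Green's formula \eqref{Green} applies with $u=1$ and $\z=\z(t)$ and yields
\[
\int_X\big({\rm div}_m\z(t)\big)(x)\,d\nu(x)=-\frac12\int_{X\times X}\nabla 1(x,y)\,\z(t)(x,y)\,d\nu\otimes dm_x=0,
\]
since $\nabla 1\equiv 0$. Moreover, because $\nu$ is finite the linear functional $w\mapsto\int_X w\,d\nu$ is continuous on $L^2(X,\nu)$, hence $t\mapsto\int_X v(t)\,d\nu$ is absolutely continuous on $(0,T)$ and $\frac{d}{dt}\int_X v(t)\,d\nu=\int_X v_t(t)\,d\nu$ for a.e.\ $t$.

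Finally I would set $\phi(t):=\int_X v(t)\,d\nu-\int_X f\,d\nu$; integrating the displayed identity over $X$ and combining the two facts just stated gives $\phi'(t)+\lambda\phi(t)=0$ for a.e.\ $t\in(0,T)$. Since $\phi(0)=\int_X v_0\,d\nu-\int_X f\,d\nu=0$ by hypothesis, integrating $(e^{\lambda t}\phi(t))'=0$ forces $\phi\equiv0$ on $[0,T)$; as $T>0$ is arbitrary, $\int_X T_\lambda(t)v_0\,d\nu=\int_X f\,d\nu$ for all $t\ge 0$. The only points needing a little care — and they are entirely routine — are the measurable-in-$t$ selection of $\z(t)$ with a uniform $L^\infty$-bound (built into the notion of strong solution via Theorem~\ref{chsubd}) and the differentiation under the integral sign, both of which are immediate from the finiteness of $\nu$ and the $W^{1,1}_{\mathrm{loc}}$-regularity of $v$; I do not expect any genuine obstacle.
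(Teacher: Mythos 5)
Your proof is correct and follows essentially the same route as the paper: integrate the inclusion over $X$, use Green's formula with $u\equiv 1$ to kill the $\Delta_1^m$ term, and solve the resulting linear ODE with zero initial datum. The only (cosmetic) difference is that the paper first integrates in time and derives an ODE for $z(t)=\int_0^t\int_X v(s)\,d\nu\,ds$, thereby sidestepping the differentiation under the integral sign that you instead justify directly via the $W^{1,1}_{\mathrm{loc}}$-regularity of $v$ and the finiteness of $\nu$.
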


\begin{proof} If $v(t):= T_\lambda(t) v_0$, we have
$$0 \in \lambda (v(t) - f) + v_t - \Delta_1^m  v(t) . $$
Integrating and having in mind that $\displaystyle\int_X v_0 d\nu =  \int_X f \ d\nu$,  we get
$$\lambda \int_0^t \int_X v(s)  d\nu ds - \lambda t \int_X f d\nu + \int_X v(t) d\nu - \int_X f d\nu =0.$$
Then, the function
$$z(t):= \int_0^t \int_X v(s)  d\nu ds,$$
verifies
$$\left\{\begin{array}{ll} \displaystyle z'(t) + \lambda z(t) = \lambda t \int_X f d\nu + \int_X f d\nu \\[7pt] z(0) = 0, \end{array} \right.$$
whose unique solution is $\displaystyle z(t) = t \int_X f d \nu$. Hence
$$\int_X v(t) d\nu = \int_X f d\nu.$$
\end{proof}

\section{The $m$-ROF-Model with $L^1$-fidelity term}

We will study in this section the ROF-model with $L^1$-fidelity term, that is, given $f \in L^1(X, \nu)$  and $\lambda>0$, we will study the  minimization of the energy given by the sum of the total variation and the $L^1$-fidelity term:
\begin{equation}\label{l1fidelity1}
\mathcal{E}_m(u, f, \lambda):= TV_m(u) + \lambda \int_X \vert u - f \vert d\nu, \ \  u\in L^1(X,\nu).
\end{equation}

Let us denote
$$\mathcal{E}_m( f, \lambda):=\inf_{u \in L^1(X,\nu)} \mathcal{E}_m(u, f, \lambda).$$
 We introduce the following notation to denote the set of minimizers of $\mathcal{E}_m(\cdot, f, \lambda)$ for a given function $f \in L^1(X, \nu)$ and $\lambda >0$:
  $$M(f, \lambda) = \left\{ u \in L^1(X, \nu) \, : \, \mathcal{E}_m(u, f, \lambda) = \mathcal{E}_m( f, \lambda)  \right\}.$$

Note that the set $M(f, \lambda)$ can have several elements. Due to the convexity and the lower semi-continuity of the energy  functional $\mathcal{E}_m(\cdot, f, \lambda)$ we have that

\begin{equation}\label{Conclo}
\hbox{the set} \ M(f, \lambda) \ \hbox{is closed and convex in} \ L^1(X,\nu).
\end{equation}

In the local case, that is, for problem \eqref{BV1decomp}, the fact that there exists a minimizer for every data in $L^1$  is a consequence of the direct method of the calculus of variations. However, in our context, we do not have sufficient compactness properties in order to apply this method. Therefore, the proof that $M(f, \lambda) \not= \emptyset$ for every $f \in L^1(X, \nu)$ will be shown after the study of the geometric problem associated to the $(BV,L^1)$-decomposition is adressed in Section~\ref{secgeoprobl}.

  We have the following Maximum Principle.
 \begin{proposition}\label{MP2}
  Let $f\in L^1(X,\nu)$, $\lambda>0$ and $c, C \in \R$, and assume that $c \leq f \leq C$ $\nu$-a.e.  Then
 $$c \leq u  \leq C \quad \nu-a.e. \quad \forall u \in M(f, \lambda).$$
 \end{proposition}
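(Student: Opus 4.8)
The plan is to show that replacing any minimizer $u$ by its truncation $u^C := \min\{u, C\}$ (and similarly truncating from below by $c$) does not increase the energy $\mathcal{E}_m(\cdot, f, \lambda)$, and strictly decreases it unless $u \le C$ already. The key is that truncation reduces both terms of the functional simultaneously. For the fidelity term, on the set $\{u > C\}$ we have $f \le C < u$, so $|u^C - f| = |C - f| \le |u - f|$ pointwise, and $|u^C - f| = |u - f|$ elsewhere; hence $\int_X |u^C - f|\, d\nu \le \int_X |u - f|\, d\nu$, with strict inequality if $\nu(\{u > C\}) > 0$. For the total variation term, I would use the fact that truncation is a $1$-Lipschitz operation that decreases oscillations: for any $x, y$, writing $T(s) = \min\{s, C\}$, one has $|T(u(y)) - T(u(x))| \le |u(y) - u(x)|$ because $T$ is a contraction on $\R$. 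Integrating against $d(\nu \otimes m_x)$ gives $TV_m(u^C) \le TV_m(u)$. Combining, $\mathcal{E}_m(u^C, f, \lambda) \le \mathcal{E}_m(u, f, \lambda) = \mathcal{E}_m(f,\lambda)$, so $u^C$ is also a minimizer; but if $\nu(\{u > C\}) > 0$ the fidelity term drops strictly, contradicting minimality. Hence $u \le C$ $\nu$-a.e. The bound $u \ge c$ $\nu$-a.e.\ follows by the symmetric argument with $\max\{u, c\}$, or by applying the result just proved to $-u$, $-f$, noting $\mathcal{E}_m$ is invariant under $u \mapsto -u$, $f \mapsto -f$.

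Alternatively, and perhaps more cleanly, I would invoke the coarea formula (Theorem \ref{1coarea1}) together with the analogous layer-cake representation of the fidelity term: since $\int_X |u - f|\, d\nu = \int_{-\infty}^{+\infty} \nu\big( \{u > t\} \triangle \{f > t\} \big)\, dt$, one can write $\mathcal{E}_m(u, f, \lambda)$ as an integral over $t \in \R$ of the "geometric" energies $P_m(E_t(u)) + \lambda\, \nu\big(E_t(u) \triangle E_t(f)\big)$. For $t \ge C$ we have $E_t(f) = \emptyset$ (up to $\nu$-null sets), so the integrand at level $t$ equals $P_m(E_t(u)) + \lambda\, \nu(E_t(u))$, which is minimized (equal to zero) by $E_t(u) = \emptyset$; replacing $E_t(u)$ by $\emptyset$ for all $t \ge C$ corresponds exactly to truncating $u$ at level $C$ from above, and can only decrease the energy. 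The same at levels $t < c$ handles the lower bound. This route has the advantage of making transparent why the truncation is optimal level by level.

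I expect the main (modest) obstacle to be purely bookkeeping: verifying that the truncated function $u^C$ still lies in $L^1(X, \nu)$ and in $BV_m(X)$ so that it is an admissible competitor — this is immediate since $|u^C| \le |u| + |C|$ and $TV_m(u^C) \le TV_m(u) < \infty$ — and handling the measure-theoretic subtleties (the level sets $E_t(\cdot)$ are defined only up to $\nu$-null sets, and the identities hold for a.e.\ $t$). There is no real analytic difficulty: the whole argument rests on the elementary pointwise inequality $|T(a) - T(b)| \le |a - b|$ for the contraction $T$, plus monotonicity of the fidelity term under truncation toward the range of $f$. One should also note that the argument does not even use that $u$ is a minimizer for the strict-decrease step applied in the reverse direction — it genuinely shows every element of $M(f,\lambda)$ satisfies the two-sided bound, which is exactly the claim.
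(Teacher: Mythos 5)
Your proposal is correct and follows essentially the same route as the paper: truncate $u$ at $C$ (resp.\ $c$), observe that both $TV_m$ and the fidelity term can only decrease, and conclude from minimality that the truncation changes nothing $\nu$-a.e. The paper's proof is just a terser version of your first paragraph (it states the two monotonicity inequalities as obvious and deduces $u\wedge C=u$ from the forced equality of the fidelity terms), so no further comment is needed.
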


 \begin{proof} Let $u \in M(f, \lambda)$.
 Obviously, we have that $TV_m( u  \wedge C ) \leq TV_m(u )$ and $\vert  ( u  \wedge C ) - f \vert \leq \vert u  - f \vert$. Hence,
 $$TV_m( u  \wedge C ) +  {\lambda} \Vert  (u  \wedge C)  - f \Vert_1 \leq TV_m(u ) +  {\lambda}  \Vert u  - f \Vert_1.$$
 Therefore, the above inequality is an equality and we have $\Vert  (u  \wedge C)  - f \Vert_1 =  \Vert u  - f \Vert_1$, from where we conclude that $$u \wedge C  = u  \quad \nu\hbox{-a.e.}$$ Similarly, it follows that $ u  \vee c = u \ $ $\nu$-a.e.
\end{proof}

\begin{remark}\label{saltos1}{\rm
Similarly to~\cite[Claims 4 \&  5]{ChanEsedoglu}, if $\lambda_2>\lambda_1>0$, then
$$\Vert u_{\lambda_1}-f\Vert_1\ge \Vert u_{\lambda_2}-f\Vert_1,\quad\hbox{for }u_{\lambda_i} \in M(f, \lambda_i),\ i=1,2,$$
and the set
$$\Lambda(f):=\left\{\lambda:\inf_{u\in M(f,\lambda)}\Vert u-f\Vert_1<\sup_{u\in M(f,\lambda)}\Vert u-f\Vert_1\right\}$$
is at most countable.
}
\end{remark}

\begin{proposition}\label{sec002}
 Let $f\in L^1(X,\nu)$, then $\mathcal{E}_m( f, \lambda)$ is Lipschitz continuous with respect to $\lambda$.
\end{proposition}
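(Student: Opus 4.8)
The plan is to exploit the elementary structure of $\lambda\mapsto\mathcal{E}_m(f,\lambda)$. For each fixed $u\in L^1(X,\nu)$ the map $\lambda\mapsto \mathcal{E}_m(u,f,\lambda)=TV_m(u)+\lambda\Vert u-f\Vert_1$ is affine in $\lambda$ with \emph{nonnegative} slope $\Vert u-f\Vert_1$; hence $\mathcal{E}_m(f,\cdot)=\inf_{u\in L^1(X,\nu)}\mathcal{E}_m(u,f,\cdot)$ is, being a pointwise infimum of such functions, concave and nondecreasing on $(0,\infty)$. In particular, for $0<\lambda_1<\lambda_2$ we immediately have $\mathcal{E}_m(f,\lambda_2)-\mathcal{E}_m(f,\lambda_1)\ge 0$, so the whole matter reduces to a uniform bound on the slopes from above, which I will show is controlled by $\Vert f\Vert_1$.

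To that end, fix $0<\lambda_1<\lambda_2$ and, for $\varepsilon>0$, choose $u^\varepsilon\in L^1(X,\nu)$ with $\mathcal{E}_m(u^\varepsilon,f,\lambda_1)\le \mathcal{E}_m(f,\lambda_1)+\varepsilon$ (we deliberately work with $\varepsilon$-minimizers, since $M(f,\lambda)\ne\emptyset$ is only established later, in Section~\ref{secgeoprobl}). Testing the infimum defining $\mathcal{E}_m(f,\lambda_1)$ with the competitor $u\equiv 0$, and using $TV_m(0)=0$, gives $\mathcal{E}_m(f,\lambda_1)\le \lambda_1\Vert f\Vert_1$; therefore
$$\lambda_1\Vert u^\varepsilon-f\Vert_1\le \mathcal{E}_m(u^\varepsilon,f,\lambda_1)\le \mathcal{E}_m(f,\lambda_1)+\varepsilon\le \lambda_1\Vert f\Vert_1+\varepsilon,$$
so that $\Vert u^\varepsilon-f\Vert_1\le \Vert f\Vert_1+\varepsilon/\lambda_1$. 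Now use $u^\varepsilon$ itself as a competitor at the level $\lambda_2$:
$$\mathcal{E}_m(f,\lambda_2)\le \mathcal{E}_m(u^\varepsilon,f,\lambda_2)=\mathcal{E}_m(u^\varepsilon,f,\lambda_1)+(\lambda_2-\lambda_1)\Vert u^\varepsilon-f\Vert_1\le \mathcal{E}_m(f,\lambda_1)+\varepsilon+(\lambda_2-\lambda_1)\Big(\Vert f\Vert_1+\tfrac{\varepsilon}{\lambda_1}\Big).$$
Letting $\varepsilon\to 0$ yields $0\le \mathcal{E}_m(f,\lambda_2)-\mathcal{E}_m(f,\lambda_1)\le \Vert f\Vert_1\,(\lambda_2-\lambda_1)$, i.e. $\mathcal{E}_m(f,\cdot)$ is Lipschitz on $(0,\infty)$ with constant $\Vert f\Vert_1$ (one may sharpen the constant to $\inf_{c\in\R}\Vert f-c\Vert_1$ by testing with the optimal constant given by \eqref{NNsigg1} instead of $u\equiv 0$).

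There is no serious obstacle here; the only point requiring care is that the nonemptiness of $M(f,\lambda)$ is not yet available, which forces the argument through $\varepsilon$-minimizers rather than genuine minimizers. The step that actually upgrades the pointwise/local estimate to a \emph{global} Lipschitz bound is the uniform control $\Vert u^\varepsilon-f\Vert_1\lesssim \Vert f\Vert_1$ coming from comparison with the trivial competitor $u\equiv 0$; equivalently, it reflects the fact that the concave nondecreasing function $\mathcal{E}_m(f,\cdot)$ satisfies $0\le \mathcal{E}_m(f,\lambda)\le \lambda\Vert f\Vert_1$, whence $\mathcal{E}_m(f,0^+)=0$ and its slope never exceeds $\mathcal{E}_m(f,\lambda)/\lambda\le\Vert f\Vert_1$.
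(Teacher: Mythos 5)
Your proof is correct and follows essentially the same route as the paper's: both rest on the observation that $\mathcal{E}_m(f,\cdot)$ is a pointwise infimum of nondecreasing affine functions of $\lambda$ (hence concave and nondecreasing) combined with the bound $\mathcal{E}_m(f,\lambda)\le \mathcal{E}_m(0,f,\lambda)=\lambda\Vert f\Vert_1$ from the competitor $u\equiv 0$. Your $\varepsilon$-minimizer computation merely makes explicit the slope estimate that the paper's terser argument leaves implicit, and it correctly avoids assuming that minimizers exist.
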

\begin{proof}
 Since $\mathcal{E}_m( f, \lambda)$ is defined as the pointwise infimum of a collection of increasing and linear functions in $\lambda$, we have that $\mathcal{E}_m( f, \lambda)$ is increasing and concave in $\lambda$. This, together with the fact that
$$\mathcal{E}_m( f, \lambda)\leq \mathcal{E}_m(0, f, \lambda)=\lambda\Vert f\Vert_{1} \ ,$$
gives the desired property.
\end{proof}

\begin{lemma}\label{sec001}
Let $\Omega$ be a $\nu$-measurable set  and $\lambda>0$.  Then
$$u_\lambda\in M(\1_\Omega,\lambda) \ \Longleftrightarrow \   \1_X-u_\lambda\in M(\1_{X\setminus\Omega},\lambda). $$
\end{lemma}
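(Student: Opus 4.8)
The plan is to exploit the symmetry of the energy $\mathcal{E}_m(\cdot,\cdot,\lambda)$ under the involution $u\mapsto \1_X-u$. Since $\nu$ is finite (our standing assumption), $\1_X\in L^1(X,\nu)$, so the map $\Phi(u):=\1_X-u$ is a well-defined bijection of $L^1(X,\nu)$ onto itself with $\Phi\circ\Phi=\mathrm{id}$. The whole argument is then essentially a change of variables in the minimization problem.

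First I would check that $\Phi$ leaves the $m$-total variation unchanged: for any $u$ we have $\nabla(\1_X-u)(x,y)=(\1_X-u)(y)-(\1_X-u)(x)=-\nabla u(x,y)$, hence $\vert\nabla(\1_X-u)\vert=\vert\nabla u\vert$ pointwise and therefore $TV_m(\1_X-u)=TV_m(u)$. Next I would rewrite the fidelity term using $\1_{X\setminus\Omega}=\1_X-\1_\Omega$: one has $(\1_X-u)-\1_{X\setminus\Omega}=\1_\Omega-u$, so $\int_X\vert(\1_X-u)-\1_{X\setminus\Omega}\vert\,d\nu=\int_X\vert u-\1_\Omega\vert\,d\nu$. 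Combining the two computations gives
\[
\mathcal{E}_m(\1_X-u,\1_{X\setminus\Omega},\lambda)=\mathcal{E}_m(u,\1_\Omega,\lambda)\qquad\text{for all }u\in L^1(X,\nu).
\]

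Finally, since $\Phi$ is a bijection of $L^1(X,\nu)$, taking infima over $u$ on both sides of the displayed identity yields $\mathcal{E}_m(\1_{X\setminus\Omega},\lambda)=\mathcal{E}_m(\1_\Omega,\lambda)$, and the same identity then shows that $u_\lambda$ realizes the infimum defining $\mathcal{E}_m(\1_\Omega,\lambda)$ if and only if $\1_X-u_\lambda$ realizes the infimum defining $\mathcal{E}_m(\1_{X\setminus\Omega},\lambda)$; that is, $u_\lambda\in M(\1_\Omega,\lambda)$ iff $\1_X-u_\lambda\in M(\1_{X\setminus\Omega},\lambda)$ (the equivalence also holds trivially when both sets are empty, so no existence result is needed here). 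There is no real obstacle in this proof; the only point one must not overlook is that $\1_X\in L^1(X,\nu)$, which is precisely what the hypothesis that $\nu$ is finite provides.
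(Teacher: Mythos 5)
Your proposal is correct and is essentially the paper's own argument: the paper's one-line proof rests on exactly the identity $\mathcal{E}_m(u,\1_\Omega,\lambda)=\mathcal{E}_m(\1_X-u,\1_{X\setminus\Omega},\lambda)$, which you verify by checking invariance of $TV_m$ under $u\mapsto \1_X-u$ and equality of the fidelity terms. You merely spell out the details (including the harmless observation that $\1_X\in L^1(X,\nu)$ since $\nu$ is finite) that the paper leaves implicit.
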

\begin{proof} This follows easily since
$\mathcal{E}_m(u,\1_\Omega,\lambda)=\mathcal{E}_m(  \1_X-u,\1_{X\setminus\Omega},\lambda) . $
\end{proof}

In the next result we obtain the Euler-Lagrange equation of the variational problem~\eqref{VARPRO1}.

 \begin{theorem}\label{El1} Assume that $f \in L^2(X, \nu)$ and let $\lambda>0$. Then, $u_\lambda \in M(f, \lambda)$ if, and only if, \ there exists $\xi \in  \hbox{sign}(u_\lambda - f)$ such that   $$\lambda \xi \in \Delta_1^m (u_\lambda).$$
 \end{theorem}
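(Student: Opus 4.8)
The plan is to characterize membership in $M(f,\lambda)$ via the subdifferential inclusion $0 \in \partial \mathcal{E}_m(u_\lambda, f, \lambda)$, exactly as was done for the $L^2$-fidelity case in Theorem~\ref{ExistUniqmin}. Since $u_\lambda$ minimizes the convex, lower semi-continuous functional $\mathcal{E}_m(\cdot, f, \lambda) = \mathcal{F}_m(\cdot) + \lambda \Psi(\cdot)$, where $\Psi(u) := \int_X |u - f|\, d\nu$ is the (convex, continuous) $L^1$-fidelity term on $L^2(X,\nu)$, we have $u_\lambda \in M(f,\lambda)$ iff $0 \in \partial \mathcal{E}_m(u_\lambda, f, \lambda)$. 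First I would invoke the sum rule for subdifferentials (as in \cite[Corollary 2.11]{Brezis}): since $\Psi$ is continuous on $L^2(X,\nu)$, $\partial \mathcal{E}_m(u, f, \lambda) = \partial \mathcal{F}_m(u) + \lambda\, \partial \Psi(u)$. Thus $u_\lambda \in M(f,\lambda)$ iff there exists $\eta \in \partial \Psi(u_\lambda)$ with $-\lambda \eta \in \partial \mathcal{F}_m(u_\lambda)$, i.e. $\lambda \eta \in \Delta_1^m(u_\lambda)$ by Definition~\ref{el1laplac}.

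Second, I would identify $\partial \Psi(u)$ explicitly. For $\Psi(u) = \int_X |u - f|\, d\nu$ on $L^2(X,\nu)$, a standard computation (pointwise characterization of the subdifferential of the $L^1$-norm) gives $\eta \in \partial \Psi(u)$ iff $\eta \in \mathrm{sign}(u - f)$ $\nu$-a.e., that is, $\eta(x) = 1$ where $u(x) > f(x)$, $\eta(x) = -1$ where $u(x) < f(x)$, and $\eta(x) \in [-1,1]$ where $u(x) = f(x)$; here one uses $f \in L^2(X,\nu)$ so that $u - f \in L^2(X,\nu)$ and $\Psi$ is well-defined and finite on all of $L^2(X,\nu)$. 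Setting $\xi := \eta$, this yields exactly the claimed equivalence: $u_\lambda \in M(f,\lambda)$ iff there exists $\xi \in \mathrm{sign}(u_\lambda - f)$ with $\lambda \xi \in \Delta_1^m(u_\lambda)$.

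The main technical point — really the only obstacle worth flagging — is justifying the subdifferential sum rule and the identification of $\partial \Psi$ in the infinite-dimensional setting $L^2(X,\nu)$. For the sum rule this is clean because $\Psi$ is finite and continuous everywhere on $L^2(X,\nu)$, so the interior condition of the Moreau–Rockafellar theorem holds trivially; for $\partial \Psi$ one argues that $\eta \in \partial\Psi(u)$ means $\int_X |w - f|\, d\nu - \int_X |u - f|\, d\nu \ge \int_X \eta (w - u)\, d\nu$ for all $w \in L^2(X,\nu)$, and by localizing (testing with $w = u + t \1_A$ for measurable $A$ and $t \to 0^{\pm}$) one recovers the pointwise sign condition, while conversely any such $\eta$ is clearly a subgradient by convexity of $t \mapsto |t|$. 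Finally I would remark that the statement only asserts a characterization, not existence of $u_\lambda$; existence of an element of $M(f,\lambda)$ is deferred, as the excerpt notes, to the analysis of the associated geometric problem, so nothing further is required here. (I would also note in passing that the reference to problem~\eqref{VARPRO1} should instead point to the minimization of $\mathcal{E}_m(\cdot,f,\lambda)$, i.e. \eqref{l1fidelity1}.)
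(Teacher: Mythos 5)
Your proposal follows essentially the same route as the paper: reduce to $0\in\partial\mathcal{E}_m(u_\lambda,f,\lambda)$, split the subdifferential via \cite[Corollary 2.11]{Brezis}, and identify the subdifferential of the $L^1$-fidelity term with $\mathrm{sign}(u_\lambda-f)$, with the sign conventions correctly matched to Definition~\ref{el1laplac}. The only difference is that you spell out the justification of the sum rule and of $\partial\Psi(u)=\mathrm{sign}(u-f)$ (using that $\nu$ is finite so $\Psi$ is finite and continuous on $L^2(X,\nu)$), which the paper leaves as ``it is not difficult to see''.
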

  \begin{proof}
  We have that $\mathcal{E}_m(u, f, \lambda) = \mathcal{F}_m(u) + \lambda\mathcal{G}_f(u)$, with
 $$\mathcal{G}_f(u):=  \int_X \vert u - f \vert d\nu.$$
Moreover,  $u_\lambda \in M(f, \lambda)$ if, and only if, $0 \in \partial \mathcal{E}_m(u_\lambda, f, \lambda)$.
Now, by  \cite[Corollary 2.11]{Brezis}, we have that
   $$\partial \mathcal{E}_m(u) = \partial \mathcal{F}_m(u) +  \lambda \partial \mathcal{G}_f(u),$$
   and then
$$u_\lambda \in M(f, \lambda)  \iff 0 \in \partial \mathcal{F}_m(u_\lambda) +   \lambda \partial \mathcal{G}_f(u_\lambda).$$
Now, it is not difficult to see that  $$v \in \partial \mathcal{G}_f(u_\lambda) \iff v \in  \hbox{sign}(u_\lambda - f).$$
Therefore,
$$u_\lambda \in M(f, \lambda)  \iff \hbox{there exists}  \ \xi \in  \hbox{sign}(u_\lambda - f) \ \hbox{ such that} \   \lambda \xi \in \Delta^m_1 (u_\lambda).$$
\end{proof}

\begin{remark}\label{aboverem01}{\rm
     As a consequence of Theorem~\ref{El1} and Theorem \ref{chsubd}, we have that:
\\[6pt] $u_\lambda \in M(f, \lambda)$ if, and only if, there exists  $\xi \in  \hbox{sign}(u_\lambda - f)$ and $\hbox{\bf g}\in L^\infty(X\times X, \nu \otimes m_x)$   antisymmetric with $\Vert \hbox{\bf g} \Vert_\infty \leq 1$  satisfying
$$\int_{X}\g(x,y)\,dm_x(y)= \lambda  \xi(x) \quad \hbox{ for } \nu-\mbox{a.e } x\in X, \ \hbox{ and}$$
$$\hbox{\bf g}(x,y) \in {\rm sign}(u_\lambda(y) - u_\lambda(x)) \quad \hbox{ for }  (\nu \otimes m_x)-a.e. \ (x,y) \in X \times X.$$
}\end{remark}

The $(BV, L^1)$-decomposion is contrast invariant (see~\cite{Darbon} for the   continuous case):
\begin{corollary}\label{continv}  Let $f\in L^1(X,\nu)$, $\lambda>0$ and $T : \R \rightarrow \R$ be a nondecreasing function. Then, if $u_\lambda \in M(f,\lambda)$, we have that $T(u_\lambda) \in M(T(f),\lambda)$.
\end{corollary}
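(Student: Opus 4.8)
The plan is to exploit the layer-cake (coarea) representation of $TV_m$ together with the contrast invariance of the geometric problem at the level of level sets, so that I would like to express everything through the superlevel sets $E_t(u)=\{u>t\}$. The key structural fact I will lean on is Theorem~\ref{1coarea1}, namely $TV_m(u)=\int_{-\infty}^{\infty}P_m(E_t(u))\,dt$, and the elementary identity $\int_X|u-f|\,d\nu=\int_{-\infty}^{\infty}\nu\big(E_t(u)\triangle E_t(f)\big)\,dt$ (valid since $\mathbf 1_{E_t(u)}-\mathbf 1_{E_t(f)}=\mathbf 1_{\{u>t\ge f\}}-\mathbf 1_{\{f>t\ge u\}}$, so its absolute value integrates in $t$ to $|u-f|$ and in $(t,x)$ to $\lambda\|u-f\|_1$). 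Thus
\begin{equation}
\mathcal E_m(u,f,\lambda)=\int_{-\infty}^{\infty}\Big(P_m(E_t(u))+\lambda\,\nu\big(E_t(u)\triangle E_t(f)\big)\Big)\,dt,
\end{equation}
which is a superposition over $t$ of the "geometric" energy
\begin{equation}
\mathcal P_\lambda(E;\,F):=P_m(E)+\lambda\,\nu(E\triangle F),\qquad F=E_t(f).
\end{equation}
This geometric functional is what Section~\ref{secgeoprobl} studies, and I expect the excerpt intends the reader to use its properties (in particular existence of minimizers and, crucially, a \emph{monotonicity/ordering} property: if $F_1\subset F_2$ then there are minimizers $E_1\subset E_2$, obtained from submodularity of $P_m$, Proposition~\ref{Juli1sub}).

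With that in hand, the first step is to observe that $T$ nondecreasing implies $E_t(T(f))=E_{T^{-1}(t)}(f)$ in an appropriate sense — more precisely, for each $t$ there is $s=s(t)\in[-\infty,+\infty]$ with $\{T(f)>t\}=\{f>s(t)\}$ (take $s(t)=\sup\{r:T(r)\le t\}$, with the usual conventions), and $s(\cdot)$ is nondecreasing. So the superlevel sets of $T(f)$ are exactly the superlevel sets of $f$, merely reindexed monotonically in the parameter. The second step: given $u_\lambda\in M(f,\lambda)$, the coarea decomposition above, combined with the fact that the minimum of $\mathcal E_m$ decomposes over $t$, forces $E_t(u_\lambda)$ to be, for a.e.\ $t$, a minimizer of $\mathcal P_\lambda(\,\cdot\,;E_t(f))$ — this is the standard "a minimizer of an integral functional with nonnegative integrand is a minimizer of the integrand for a.e.\ parameter value, provided a measurable selection of minimizers exists and can be assembled back into an admissible function via the coarea formula." One must also use that the family $\{E_t(u_\lambda)\}_t$ is nested (decreasing in $t$), which is automatic for superlevel sets, together with the ordering property of geometric minimizers to guarantee the assembled function is monotone and hence genuinely arises from superlevel sets.

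The third step is the synthesis: I want to build a candidate $w:=T(u_\lambda)$ and show $w\in M(T(f),\lambda)$. Its superlevel sets are $E_t(w)=E_{s(t)}(u_\lambda)$, which by Step~2 is a minimizer of $\mathcal P_\lambda(\,\cdot\,;E_{s(t)}(f))=\mathcal P_\lambda(\,\cdot\,;E_t(T(f)))$. Therefore, writing $\mathcal E_m(w,T(f),\lambda)=\int_{-\infty}^{\infty}\mathcal P_\lambda\big(E_t(w);E_t(T(f))\big)\,dt=\int_{-\infty}^{\infty}\min_E \mathcal P_\lambda\big(E;E_t(T(f))\big)\,dt\le \mathcal E_m(v,T(f),\lambda)$ for every admissible $v$, since the last integrand is a pointwise lower bound for the coarea decomposition of any $\mathcal E_m(v,T(f),\lambda)$. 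Hence $w$ is a minimizer. The main obstacle — and the step deserving genuine care — is the measure-theoretic bookkeeping in Step~2: justifying that one may pass from "$u_\lambda$ minimizes the integral" to "$E_t(u_\lambda)$ minimizes the integrand for a.e.\ $t$" and back, i.e.\ that there is no loss in the coarea formula and that a measurable, nested selection of geometric minimizers reassembles to an $L^1$ function. This requires knowing that $u_\lambda\in M(f,\lambda)\subset L^2$ (guaranteed when $f\in L^2$ by the maximum principle, but for general $f\in L^1$ one truncates, uses Proposition~\ref{MP2}, and passes to the limit), and it hinges on the geometric-problem results of Section~\ref{secgeoprobl}; I would cite those rather than reprove them here. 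An alternative, perhaps shorter, route avoiding coarea is to first prove the corollary for $T$ affine and for $T$ a single Heaviside-type step (using Lemma~\ref{sec001} and Theorem~\ref{El1}), then for piecewise-constant nondecreasing $T$ by combining such steps, and finally pass to a general nondecreasing $T$ by monotone approximation together with the closedness of $M(\cdot,\lambda)$ from \eqref{Conclo} and lower semicontinuity of $\mathcal E_m$; the delicate point there becomes the stability of minimizers under the approximation $T_n\to T$, which again is where I would expect to spend the real effort.
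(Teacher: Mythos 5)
Your route is genuinely different from the paper's. The paper proves this corollary in three lines from the Euler--Lagrange characterization (Theorem~\ref{El1}): if $\xi\in{\rm sign}(u_\lambda-f)$ satisfies $\lambda\xi\in\Delta_1^m(u_\lambda)$, then, because $T$ is nondecreasing, the \emph{same} $\xi$ lies in ${\rm sign}(T(u_\lambda)-T(f))$ and the \emph{same} antisymmetric $\g$ from Theorem~\ref{chsubd}(v) witnesses $\lambda\xi\in\Delta_1^m(T(u_\lambda))$, since ${\rm sign}(a-b)\subset{\rm sign}(T(a)-T(b))$ pointwise. You instead go through the coarea representation of the energy (Theorem~\ref{setformulation}) and the equivalence with the geometric problem (Theorem~\ref{geomequiv}); these appear \emph{after} the corollary in the paper but do not depend on it, so there is no circularity. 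Your route has one real advantage: it works directly for $f\in L^1(X,\nu)$, whereas Theorem~\ref{El1} is stated for $f\in L^2(X,\nu)$. On the other hand, much of your discussion (nested measurable selections, ordering of geometric minimizers, truncation to reach $L^2$) is not needed here: the candidate $T(u_\lambda)$ is already a function, so its superlevel sets are automatically nested, and only the two implications of Theorem~\ref{geomequiv} plus the relabelling of level sets are required.

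There is, however, one concrete gap: the identity $\{T(g)>t\}=\{g>s(t)\}$ with $s(t)=\sup\{r:T(r)\le t\}$ is false in general. Since $T$ is only nondecreasing, $\{r:T(r)>t\}$ is an upper half-line that may be \emph{closed}, so $\{T(g)>t\}$ equals either $\{g>a_t\}$ or $\{g\ge a_t\}$, and the second case can occur for $t$ in a set of positive Lebesgue measure: for $T$ a Heaviside step at $0$ one has $\{T(g)>t\}=\{g\ge 0\}$ for every $t\in[0,1)$, and if $\nu(\{f=0\})>0$ this is not a superlevel set of $f$ at all. Consequently you cannot invoke ``$E_{s(t)}(u_\lambda)$ is a geometric minimizer for datum $E_{s(t)}(f)$'' verbatim; you must additionally show that $\{u_\lambda\ge a\}$ minimizes $\mathcal{E}_m^G(\cdot,\{f\ge a\},\lambda)$. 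This can be repaired: pick good levels $s_n\uparrow a$ (a full-measure set of such levels exists by Theorem~\ref{geomequiv}), note $\1_{E_{s_n}(u_\lambda)}\to\1_{\{u_\lambda\ge a\}}$ and $\1_{E_{s_n}(f)}\to\1_{\{f\ge a\}}$ in $L^1(X,\nu)$ because $\nu$ is finite, and check that $E\mapsto P_m(E)$ and $(E,F)\mapsto\nu(E\triangle F)$ are continuous along such convergence (for $P_m$ this uses the invariance of $\nu$ to control the $y$-variable). Passing to the limit in the minimality inequalities then closes the argument. Without this step the proof is incomplete; with it, the argument is correct.
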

\begin{proof} Given $u_\lambda \in M(f,\lambda)$ we have that, by Theorem \ref{El1}, there exists $\xi \in  \hbox{sign}(u_\lambda - f)$ such that   $$\lambda \xi \in \Delta_1^m (u_\lambda).$$
Then, since $T$ is nondecreasing, we have that $\xi \in  \hbox{sign}(T(u_\lambda) - T(f))$ and   $$\lambda \xi \in \Delta_1^m (T(u_\lambda)).$$
Therefore, applying again Theorem \ref{El1}, we get $T(u_\lambda) \in M(T(f),\lambda)$.
\end{proof}

Like in the local case, by the coarea   and the  layer cake  formulas, we obtain that
$$\int_X \vert u - f \vert d\nu = \int_{-\infty}^{+\infty}  \nu( \{ x :  u(x) > t \} \bigtriangleup \{ x :  f(x) > t \}) dt$$
where $$A \bigtriangleup B:= (A \setminus B) \cup (B \setminus A).$$
Therefore, the energy functional $\mathcal{E}_m(\cdot, f, \lambda)$ can be rewritten in a geometric form in terms of the energies of the superlevel sets of $u$ as follows (see \cite[Proposition 5.1]{ChanEsedoglu}).
\begin{theorem}\label{setformulation}  Let $u$, $f\in L^1(X,\nu)$ and $\lambda>0$, then
\begin{equation}\label{esetformulation}
\mathcal{E}_m(u, f, \lambda) = \int_{-\infty}^{+\infty}  \Big(P_m (E_t(u)) + \lambda \nu( E_t(u) \bigtriangleup E_t(f))\Big) dt.
\end{equation}
\end{theorem}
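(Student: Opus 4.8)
The plan is to derive \eqref{esetformulation} by combining the coarea formula for $TV_m$ (Theorem~\ref{1coarea1}) with the layer-cake representation of the $L^1$-fidelity term that is stated just before the theorem. First I would recall that, by Theorem~\ref{1coarea1} applied to $u\in L^1(X,\nu)$,
\begin{equation*}
TV_m(u) = \int_{-\infty}^{+\infty} P_m(E_t(u))\, dt,
\end{equation*}
where $E_t(u) = \{x\in X : u(x) > t\}$. Next I would invoke the identity displayed immediately above the statement, namely
\begin{equation*}
\int_X |u - f|\, d\nu = \int_{-\infty}^{+\infty} \nu\big(E_t(u)\,\triangle\,E_t(f)\big)\, dt,
\end{equation*}
which itself follows from the scalar layer-cake formula $|a-b| = \int_{-\infty}^{+\infty} |\mathbf 1_{\{a>t\}} - \mathbf 1_{\{b>t\}}|\, dt$ applied pointwise with $a = u(x)$, $b = f(x)$, followed by Tonelli's theorem to exchange the $\nu$-integral with the $t$-integral (this is legitimate since the integrand is nonnegative and $\mathcal L^1$-measurable jointly in $(x,t)$), using that $|\mathbf 1_{A} - \mathbf 1_{B}| = \mathbf 1_{A\triangle B}$ so that $\int_X |\mathbf 1_{E_t(u)} - \mathbf 1_{E_t(f)}|\, d\nu = \nu(E_t(u)\triangle E_t(f))$.

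With these two formulas in hand, the proof is essentially bookkeeping: by definition $\mathcal E_m(u,f,\lambda) = TV_m(u) + \lambda\int_X |u-f|\, d\nu$, so substituting both integral representations and combining the two integrals over $t$ (again justified because each integrand is nonnegative, so the sum of the integrals equals the integral of the sum) yields
\begin{equation*}
\mathcal E_m(u,f,\lambda) = \int_{-\infty}^{+\infty} \Big(P_m(E_t(u)) + \lambda\,\nu\big(E_t(u)\,\triangle\,E_t(f)\big)\Big)\, dt,
\end{equation*}
which is exactly \eqref{esetformulation}. One should note that the right-hand side is well-defined as an element of $[0,+\infty]$ regardless of finiteness of $TV_m(u)$, and the identity holds in $[0,+\infty]$; if $\mathcal E_m(u,f,\lambda) < \infty$ then both pieces are finite and the manipulation is unconditionally valid.

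The only genuine point requiring care — and hence the main (mild) obstacle — is the measurability and applicability of Tonelli's theorem for the fidelity term: one must check that $(x,t)\mapsto |\mathbf 1_{\{u(x)>t\}} - \mathbf 1_{\{f(x)>t\}}|$ is jointly measurable with respect to $\nu\otimes\mathcal L^1$. This is standard, since $\{(x,t): u(x)>t\}$ is measurable whenever $u$ is $\nu$-measurable (it is the subgraph region of $u$), and likewise for $f$. Everything else is a direct consequence of results already established in the excerpt, so no further compactness or regularity input is needed.
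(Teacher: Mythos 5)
Your proposal is correct and follows exactly the route the paper takes: the paper presents this identity as an immediate consequence of the coarea formula (Theorem~\ref{1coarea1}) together with the layer-cake representation of $\int_X|u-f|\,d\nu$ displayed just before the statement, which is precisely your argument. Your added care about joint measurability and Tonelli is a sound filling-in of details the paper leaves implicit, not a departure from its proof.
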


Consequently, given a $\nu$-measurable set $\Omega\subset X$ and taking $f= \1_\Omega$, by the Maximum Principle (Proposition \ref{MP2}), we get
\begin{equation}\label{esetformulationN}
\mathcal{E}_m(u, \1_\Omega, \lambda) = \int_{0}^{1}  \Big(P_m (E_t(u)) + \lambda \nu( E_t(u) \bigtriangleup \Omega)\Big) dt.
\end{equation}

\subsection{The Geometric Problem}\label{secgeoprobl}

Given a $\nu$-measurable set $F \subset X$, we consider the geometric functional
$$\mathcal{E}_m^G(A,F,\lambda):= P_m(A) + \lambda\nu(A \bigtriangleup F).$$
In view of   Theorem~\ref{setformulation}, given $f \in L^1(X, \nu)$, one may consider the family of geometric problems
\begin{equation}\label{geomprob1}
P(f, t, \lambda): \quad \inf_{ A \in \mathcal{B}(X)} \mathcal{E}_m^G(A,E_t(f),   \lambda), \  \hbox{ for any } t \in \R,
\end{equation}
where $\mathcal{B}(X)$ is the family of Borel subsets of $X$.
By Theorem~\ref{setformulation},  we can see that
 a minimizer of
$\mathcal{E}_m^G(A,F,\lambda)$, $A \in \mathcal{B}(X)$,
  always exists:

 \begin{theorem}\label{characteristicf}   Let    $F\subset X$ be a non- $\nu$-null $\nu$-measurable set and $\lambda>0$. Then, there exists a minimizer $u_\lambda$ of $\mathcal{E}_m(., \1_F, \lambda)$. Moreover, for almost every $t \in ]0,1[$,   $ E_t(u_\lambda) $ is   a minimizer of $\mathcal{E}_m^G(., F, \lambda)$, and
 $$\min_{ A \in \mathcal{B}(X)} \mathcal{E}_m^G(A,F, \lambda)=\min_{u \in L^1(X,\nu)} \mathcal{E}_m(u, \1_F, \lambda).$$
 \end{theorem}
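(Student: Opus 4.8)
The plan is to prove existence of a minimizer $u_\lambda$ of $\mathcal{E}_m(\cdot,\1_F,\lambda)$ by first solving the geometric problem and then ``stacking'' the minimizing sets into a function via the layer-cake/coarea machinery. The starting observation is that by \eqref{esetformulationN} together with Theorem~\ref{setformulation}, for any $u$ with $0\le u\le 1$ one has
\begin{equation*}
\mathcal{E}_m(u,\1_F,\lambda)=\int_0^1\mathcal{E}_m^G(E_t(u),F,\lambda)\,dt\ \ge\ \int_0^1\Big(\inf_{A\in\mathcal{B}(X)}\mathcal{E}_m^G(A,F,\lambda)\Big)dt=\min_{A}\mathcal{E}_m^G(A,F,\lambda),
\end{equation*}
where by the Maximum Principle (Proposition~\ref{MP2}) it suffices to consider such $u$. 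So the infimum on the function side is bounded below by the geometric minimum, and we only need to produce a function achieving equality.

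First I would establish that $\mathcal{E}_m^G(\cdot,F,\lambda)$ actually attains its infimum over $\mathcal{B}(X)$. Take a minimizing sequence $(A_n)$; since $\nu(X)<\infty$, the functions $\1_{A_n}$ are bounded in $L^2(X,\nu)$, so along a subsequence $\1_{A_n}\rightharpoonup w$ weakly in $L^2(X,\nu)$ with $0\le w\le1$. One cannot expect $w$ to be a characteristic function, but here the coarea formula (Theorem~\ref{1coarea1}) and the geometric rewriting save the day: by lower semicontinuity of $TV_m$ (Lemma~\ref{semicont}) and of $u\mapsto\int_X|u-\1_F|$, $\mathcal{E}_m(w,\1_F,\lambda)\le\liminf_n\mathcal{E}_m(\1_{A_n},\1_F,\lambda)=\inf_A\mathcal{E}_m^G(A,F,\lambda)$, while simultaneously $\mathcal{E}_m(w,\1_F,\lambda)=\int_0^1\mathcal{E}_m^G(E_t(w),F,\lambda)\,dt\ge\inf_A\mathcal{E}_m^G(A,F,\lambda)$. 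Hence equality holds throughout, which forces $\mathcal{E}_m^G(E_t(w),F,\lambda)=\inf_A\mathcal{E}_m^G(A,F,\lambda)$ for a.e.\ $t\in(0,1)$; in particular the geometric infimum is attained (by $E_t(w)$ for a.e.\ $t$) and $w$ is the desired minimizer $u_\lambda$ on the function side. This simultaneously gives the final displayed equality
$$\min_{A\in\mathcal{B}(X)}\mathcal{E}_m^G(A,F,\lambda)=\min_{u\in L^1(X,\nu)}\mathcal{E}_m(u,\1_F,\lambda)$$
and the statement that $E_t(u_\lambda)$ minimizes $\mathcal{E}_m^G(\cdot,F,\lambda)$ for a.e.\ $t\in(0,1)$.

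The main technical obstacle I anticipate is the bookkeeping at the level of superlevel sets: one must justify that the layer-cake identity $\mathcal{E}_m(w,\1_F,\lambda)=\int_0^1\mathcal{E}_m^G(E_t(w),F,\lambda)\,dt$ holds for the weak limit $w$ (which follows from Theorem~\ref{setformulation} applied to $w$, using $0\le w\le1$ and $F$ non-$\nu$-null so that the integral reduces to $(0,1)$), and that the chain of inequalities can only close up if the integrand equals its minimum a.e.\ — i.e.\ ruling out that $E_t(w)$ does strictly better than the geometric infimum on a positive-measure set of $t$, which is impossible since $\inf_A\mathcal{E}_m^G(A,F,\lambda)$ is by definition a lower bound for each $\mathcal{E}_m^G(E_t(w),F,\lambda)$. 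A minor point to handle is that $F$ being non-$\nu$-null (and $\nu(F)<\nu(X)$ not needed here) guarantees the problem is nondegenerate; and that the lower semicontinuity of the fidelity term with respect to weak $L^2$ convergence holds because $u\mapsto\int_X|u-\1_F|\,d\nu$ is convex and strongly continuous, hence weakly lower semicontinuous. Assembling these pieces yields the theorem.
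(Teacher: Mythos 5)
Your proposal is correct and follows essentially the same route as the paper: a weakly convergent (in $L^2(X,\nu)$) minimizing sequence handled by Lemma~\ref{semicont} and weak lower semicontinuity of the fidelity term, followed by the layer\-/cake identity of Theorem~\ref{setformulation} to force $\mathcal{E}_m^G(E_t(u_\lambda),F,\lambda)$ to equal the geometric infimum for a.e.\ $t$. The only (cosmetic) difference is that you run the direct method on a minimizing sequence of characteristic functions for the geometric problem and then verify the weak limit also solves the functional problem, whereas the paper minimizes $\mathcal{E}_m(\cdot,\1_F,\lambda)$ directly (using the truncation argument behind Proposition~\ref{MP2} for compactness) and then compares with the competitors $\1_A$.
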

\begin{proof} Since $\1_F\in L^\infty(X,\nu)$,  by the direct method of the calculus of variations,  we have that there exists $u_\lambda$   (which, by Proposition~\ref{MP2}, belongs to $L^\infty(X,\nu)$)  such that
$$\mathcal{E}_m(u_\lambda,\1_F, \lambda) =\min_{u \in L^1(X,\nu)} \mathcal{E}_m(u, \1_F, \lambda).$$
Now, by Theorem~\ref{setformulation},
$$\int_0^1 \mathcal{E}_m^G(E_t(u_\lambda),F, \lambda)dt=\mathcal{E}_m(u_\lambda,\1_F, \lambda)
\le\inf_{ A \in \mathcal{B}(X)} \mathcal{E}_m(\1_A,\1_F, \lambda)=\inf_{ A \in \mathcal{B}(X)} \mathcal{E}_m^G(A,F, \lambda),
$$
hence, for a.e. $t \in]0,1[$,
$$  \mathcal{E}_m^G(E_t(u_\lambda),F, \lambda) =\inf_{ A \in \mathcal{B}(X)} \mathcal{E}_m^G(A,F, \lambda),
$$
which concludes the proof.
\end{proof}

  The next results  were proved in \cite{YGO} for the local case. Now, since  their proofs only use properties of a measure,  the submodularity of the perimeter, that we have for the $m$-perimeter (Proposition~\ref{Juli1sub}), and the local version of Theorem \ref{setformulation}, with the same proofs  one can obtain these results:

\begin{lemma}\label{ole1} Given $f \in L^1(X, \nu)$  and $\lambda>0$ there exists a function $u \in L^1(X, \nu)$ such that
$$  \mathcal{E}_m^G(E_t(u),E_t(f), \lambda) =\inf_{ A \in \mathcal{B}(X)} \mathcal{E}_m^G(A,E_t(f), \lambda)  \quad \hbox{for all} \ t \in \R.
$$
\end{lemma}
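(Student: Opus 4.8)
The plan is to produce the single function $u$ all at once by stacking solutions of the geometric problems $P(f,t,\lambda)$ layer-by-layer, in the spirit of the coarea formula. First I would fix $f\in L^1(X,\nu)$ and $\lambda>0$ and, for each $t\in\R$, invoke Theorem \ref{characteristicf} (applied with $F=E_t(f)$ when $E_t(f)$ is non-$\nu$-null, and trivially $A=\emptyset$ otherwise) to pick a minimizer $A_t\in\mathcal B(X)$ of $\mathcal E_m^G(\cdot,E_t(f),\lambda)$. The obstacle is that an arbitrary choice of the $A_t$ need not be monotone in $t$ (i.e. $A_s\subset A_t$ for $s>t$), so we cannot directly define $u$ via $\{u>t\}=A_t$. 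The remedy is the classical observation, valid here because of the submodularity of $P_m$ (Proposition \ref{Juli1sub}) and the obvious submodularity of the fidelity term $A\mapsto\nu(A\bigtriangleup F)$, that the class of minimizers of $\mathcal E_m^G(\cdot,F,\lambda)$ is closed under $\cup$ and $\cap$: if $A$ and $B$ are both minimizers then
\begin{equation*}
\mathcal E_m^G(A\cup B,F,\lambda)+\mathcal E_m^G(A\cap B,F,\lambda)\le \mathcal E_m^G(A,F,\lambda)+\mathcal E_m^G(B,F,\lambda),
\end{equation*}
the inequality coming from $P_m(A\cup B)+P_m(A\cap B)\le P_m(A)+P_m(B)$ together with $\nu((A\cup B)\bigtriangleup F)+\nu((A\cap B)\bigtriangleup F)= \nu(A\bigtriangleup F)+\nu(B\bigtriangleup F)$ (a pointwise identity for the indicator functions), so both $A\cup B$ and $A\cap B$ must also be minimizers.

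Next I would use the lattice structure of the minimizer classes to build a \emph{monotone} selection. One clean way: it suffices to produce minimizers $A_q$ for $q$ ranging over $\mathbb Q$ with $A_q\subset A_{q'}$ for $q>q'$, and then set $A_t:=\bigcup_{q\in\mathbb Q,\,q>t}A_q$; since $E_t(f)=\bigcup_{q>t}E_q(f)$ and $\mathcal E_m^G$ behaves well under these monotone limits (the perimeter is lower semicontinuous along increasing unions by Lemma \ref{semicont}/the coarea formula, and the fidelity term passes to the limit by dominated convergence, using $\nu(X)<\infty$), one checks $A_t$ is again a minimizer of $P(f,t,\lambda)$ and the family $\{A_t\}_{t\in\R}$ is monotone nonincreasing in $t$. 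To get the countable monotone family, enumerate $\mathbb Q=\{q_1,q_2,\dots\}$ and recursively replace: having chosen minimizers $A_{q_1},\dots,A_{q_{n-1}}$ that are monotone among themselves, pick any minimizer $B$ of $P(f,q_n,\lambda)$ and then \emph{correct} it to
\begin{equation*}
A_{q_n}:=\Big(B\cup\!\!\bigcup_{q_i>q_n,\,i<n}\!\!A_{q_i}\Big)\cap\bigcap_{q_i<q_n,\,i<n}\!\!A_{q_i},
\end{equation*}
which by the union/intersection-stability above is still a minimizer of $P(f,q_n,\lambda)$, and by construction respects all previously imposed inclusions; a short check shows the inclusions needed in the corrections are consistent (here one uses that for $s<t$ the problems $P(f,t,\lambda)$ and $P(f,s,\lambda)$ have, among their minimizers, a pair that is nested — which follows from $E_t(f)\subset E_s(f)$ and applying the lattice argument to $A_t\cap A_s$ and $A_s\cup A_t$ exactly as in \cite{YGO}).

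Finally, with a monotone nonincreasing family $\{A_t\}_{t\in\R}$ of Borel sets, each $A_t$ a minimizer of $P(f,t,\lambda)$, define
\begin{equation*}
u(x):=\sup\{t\in\mathbb Q:\ x\in A_t\}
\end{equation*}
(with $\sup\emptyset=-\infty$, truncating so that $u$ is real-valued — using the Maximum Principle bound $\mathrm{ess\,inf}\,f\le t\le\mathrm{ess\,sup}\,f$ outside which the trivial sets $\emptyset$ or $X$ are minimizers, which one may assume were selected). Then $E_t(u)=\{x:u(x)>t\}=\bigcup_{q>t}A_q$, so $E_t(u)$ agrees up to $\nu$-null sets with a minimizer of $P(f,t,\lambda)$ for every $t$, giving $\mathcal E_m^G(E_t(u),E_t(f),\lambda)=\inf_{A\in\mathcal B(X)}\mathcal E_m^G(A,E_t(f),\lambda)$ for all $t\in\R$; and $u\in L^1(X,\nu)$ because $\nu(X)<\infty$ and $u$ is bounded. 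I expect the main obstacle to be exactly the bookkeeping that turns the pointwise-in-$t$ existence of minimizers into a \emph{single monotone} family — i.e. the lattice/selection argument above — while the measure-theoretic limits and the verification that $E_t(u)$ is a minimizer are routine once monotonicity is in hand; all of this is precisely the content of \cite{YGO} in the local setting, transported verbatim using Proposition \ref{Juli1sub} and Theorem \ref{setformulation}.
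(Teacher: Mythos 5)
Your proposal is correct and coincides with the paper's own treatment: the paper gives no independent proof of Lemma~\ref{ole1}, remarking only that the argument of \cite{YGO} transfers verbatim because it uses nothing beyond measure-theoretic identities for the fidelity term, the submodularity of $P_m$ (Proposition~\ref{Juli1sub}) and Theorem~\ref{setformulation}, and your lattice-of-minimizers/monotone-selection construction (closure of the minimizer class under $\cup$ and $\cap$ at a fixed level, the crossing inequality between levels $s<t$ coming from $E_t(f)\subset E_s(f)$, a nested family over $\mathbb{Q}$, and passage to $A_t=\bigcup_{q>t}A_q$) is exactly that transported proof. The only detail to repair is the last step for unbounded $f\in L^1(X,\nu)$, where ``$u$ is bounded'' is unavailable: instead compare $A_t$ with $\emptyset$ and with $X$ to get $\nu(E_t(u))\le 2\nu(E_t(f))$ and $\nu(X\setminus E_t(u))\le 2\nu(X\setminus E_t(f))$, and conclude $u\in L^1(X,\nu)$ from the layer-cake formula.
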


\begin{theorem}\label{thin001}
 For $f\in L^1(X,\nu)$  and $\lambda>0$ there exists (at least) one minimizer of the variational problem
\begin{equation}\label{VARPRO1}
\min_{u \in L^1(X,\nu)} \mathcal{E}_m(u, f, \lambda).
\end{equation}
\end{theorem}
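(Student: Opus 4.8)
The plan is to use the geometric decomposition established in Theorem~\ref{setformulation} together with the ``levelwise optimal'' function provided by Lemma~\ref{ole1}. Take $f \in L^1(X,\nu)$ and $\lambda > 0$, and let $u \in L^1(X,\nu)$ be the function furnished by Lemma~\ref{ole1}, so that for every $t \in \R$ the superlevel set $E_t(u)$ is a minimizer of the geometric energy $\mathcal{E}_m^G(\cdot, E_t(f), \lambda)$ among all Borel sets. The claim is that this $u$ is a minimizer of \eqref{VARPRO1}. The key point is that for \emph{any} competitor $w \in L^1(X,\nu)$ and \emph{every} real $t$ one has
\begin{equation*}
\mathcal{E}_m^G(E_t(u), E_t(f), \lambda) \le \mathcal{E}_m^G(E_t(w), E_t(f), \lambda),
\end{equation*}
simply because $E_t(w)$ is one particular Borel set and $E_t(u)$ is optimal over all of $\mathcal{B}(X)$.

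Next I would integrate this inequality in $t$ over $\R$. By Theorem~\ref{setformulation},
\begin{equation*}
\mathcal{E}_m(u, f, \lambda) = \int_{-\infty}^{+\infty} \mathcal{E}_m^G(E_t(u), E_t(f), \lambda)\, dt
\le \int_{-\infty}^{+\infty} \mathcal{E}_m^G(E_t(w), E_t(f), \lambda)\, dt = \mathcal{E}_m(w, f, \lambda).
\end{equation*}
Since $w \in L^1(X,\nu)$ was arbitrary, $u$ minimizes $\mathcal{E}_m(\cdot, f, \lambda)$, which is exactly the assertion of Theorem~\ref{thin001}. One should note that the integrands are nonnegative, so the integrals are well defined in $[0,+\infty]$, and for $w$ with $\mathcal{E}_m(w,f,\lambda) < \infty$ the comparison is a genuine inequality of finite quantities; the case where every competitor has infinite energy is trivial.

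The main obstacle, or rather the point requiring care, is making sure the hypotheses of the cited results are actually met: Theorem~\ref{setformulation} requires $u, f \in L^1(X,\nu)$, and one must check that the function $u$ produced by Lemma~\ref{ole1} indeed lies in $L^1(X,\nu)$ (which is part of the statement of that lemma) and that $\mathcal{E}_m(u,f,\lambda)$ is finite — the latter follows because, comparing with the competitor $w = f$, we get $\mathcal{E}_m(u,f,\lambda) \le \mathcal{E}_m(f,f,\lambda) = TV_m(f)$, and this is finite when $f \in BV_m(X)$; for general $f \in L^1(X,\nu) \subset BV_m(X)$ this is automatic. A secondary subtlety is the measurability in $t$ of the map $t \mapsto \mathcal{E}_m^G(E_t(w), E_t(f), \lambda)$ needed to apply Theorem~\ref{setformulation}, but this is already implicit in the coarea formula (Theorem~\ref{1coarea1}) and the layer cake representation of the fidelity term used to prove Theorem~\ref{setformulation}, so no new work is required. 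Thus the proof reduces to invoking Lemma~\ref{ole1} and Theorem~\ref{setformulation} and performing the one-line integration above.
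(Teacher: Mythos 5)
Your proof is correct and is essentially identical to the paper's own argument: both take the levelwise-optimal function from Lemma~\ref{ole1}, observe that each $E_t(w)$ is an admissible competitor in the geometric problem, and integrate the resulting pointwise inequality via Theorem~\ref{setformulation}. The extra remarks on finiteness and measurability are harmless but not needed beyond what the cited results already provide.
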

\begin{proof} Let $u$ be the function obtained in Lemma \ref{ole1}. Then,  by Theorem \ref{setformulation}, given $v\in L^1(X,\nu)$, we have
$$\mathcal{E}_m(u, f, \lambda) = \int_{-\infty}^{+\infty} \mathcal{E}_m^G(E_t(u),E_t(f), \lambda) dt \leq \int_{-\infty}^{+\infty} \mathcal{E}_m^G(E_t(v),E_t(f), \lambda) dt = \mathcal{E}_m(v, f, \lambda).$$
\end{proof}

Duval, Aujol and Gousseau in \cite[Theorem 4.2]{DAG} show that, for the continuous case, there is an equivalence with the geometric problem. This result  can be extended,  on account of the submodularity proved in Proposition~\ref{Juli1sub} for the $m$-perimeter,
to our nonlocal context:

\begin{theorem}\label{geomequiv} Let $f \in L^1(X, \nu)$  and $\lambda>0$.  The following assertions are equivalent:
\item[  (i)] $u$ is a solution of Problem \eqref{VARPRO1}.

\item[ (ii)] Almost every level set $E_t(u)$ is a solution of \eqref{geomprob1}.

\end{theorem}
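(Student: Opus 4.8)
The plan is to prove the equivalence of (i) and (ii) using the layer-cake / coarea representation of the energy from Theorem~\ref{setformulation} together with the submodularity of the $m$-perimeter (Proposition~\ref{Juli1sub}), following the strategy of Duval--Aujol--Gousseau. The direction (ii)$\Rightarrow$(i) is the easy one: if almost every level set $E_t(u)$ minimizes $\mathcal{E}_m^G(\cdot,E_t(f),\lambda)$, then for any competitor $v\in L^1(X,\nu)$ we have $\mathcal{E}_m^G(E_t(u),E_t(f),\lambda)\le \mathcal{E}_m^G(E_t(v),E_t(f),\lambda)$ for a.e.\ $t$, and integrating in $t$ and invoking Theorem~\ref{setformulation} on both sides gives $\mathcal{E}_m(u,f,\lambda)\le \mathcal{E}_m(v,f,\lambda)$; since $M(f,\lambda)\neq\emptyset$ (Theorem~\ref{thin001}) this shows $u$ is a minimizer. (One should also note that, by Theorem~\ref{thin001}/Lemma~\ref{ole1}, there is at least one function whose level sets are all minimizers, so the class of such $u$ is nonempty.)

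The substantive direction is (i)$\Rightarrow$(ii). Suppose $u$ solves Problem~\eqref{VARPRO1}. Let $w$ be the function from Lemma~\ref{ole1}, so that $E_t(w)$ minimizes $\mathcal{E}_m^G(\cdot,E_t(f),\lambda)$ for every $t$; by the easy direction $w$ is also a minimizer, hence $\mathcal{E}_m(u,f,\lambda)=\mathcal{E}_m(w,f,\lambda)$. Using Theorem~\ref{setformulation},
\[
\int_{-\infty}^{+\infty}\Big(\mathcal{E}_m^G(E_t(u),E_t(f),\lambda)-\mathcal{E}_m^G(E_t(w),E_t(f),\lambda)\Big)\,dt=0,
\]
and since the integrand is nonnegative for a.e.\ $t$ (because $E_t(w)$ is a minimizer), it vanishes for a.e.\ $t$; that is, $E_t(u)$ is itself a minimizer of $\mathcal{E}_m^G(\cdot,E_t(f),\lambda)$ for a.e.\ $t$. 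This already gives (ii), \emph{provided} one first knows that $\mathcal{E}_m(u,f,\lambda)$ and $\int \mathcal{E}_m^G(E_t(u),E_t(f),\lambda)\,dt$ are finite and that Theorem~\ref{setformulation} genuinely applies to $u$; here one uses that $u\in L^1(X,\nu)$ together with the finiteness of the minimal energy.

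The main obstacle — and the place where submodularity is really needed — is handling the case where $f$ is \emph{not} already bounded (so the $t$-range is all of $\R$, not $[0,1]$) and, more importantly, verifying that every competitor $v$ in the geometric problem $P(f,t,\lambda)$ at a fixed level $t$ can be compared against $E_t(u)$ without the level sets of $u$ at nearby levels interfering. Concretely, one must rule out the pathology that $E_t(u)$ beats $E_t(w)$ for the \emph{integrated} energy yet fails to be pointwise optimal because of nonmonotone "crossings" among the level sets $\{E_t(u)\}_t$; this is exactly controlled by the submodularity inequality $P_m(A\cup B)+P_m(A\cap B)\le P_m(A)+P_m(B)$ from Proposition~\ref{Juli1sub}, which forces that if $A$ is a strictly better competitor at level $t$ then one can surgically replace $E_t(u)$ by $E_t(u)\cup A$ or $E_t(u)\cap A$ at a \emph{family} of levels, strictly decreasing the total energy and contradicting minimality of $u$. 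I would carry this out by: (a) fixing $t_0$ and a competitor $A$ with $\mathcal{E}_m^G(A,E_{t_0}(f),\lambda)<\mathcal{E}_m^G(E_{t_0}(u),E_{t_0}(f),\lambda)$; (b) defining a modified function $\tilde u$ by inserting $A$ into the level sets of $u$ on an interval of levels around $t_0$, using the monotone rearrangement $\tilde E_t = E_t(u)\cup A$ for $t<t_0$ and $\tilde E_t=E_t(u)\cap A$ for $t\ge t_0$ (or the symmetric choice); (c) estimating, via submodularity and the subadditivity $\nu((A\cup B)\bigtriangleup F)+\nu((A\cap B)\bigtriangleup F)\le \nu(A\bigtriangleup F)+\nu(B\bigtriangleup F)$ for the fidelity term, that $\int \mathcal{E}_m^G(\tilde E_t,E_t(f),\lambda)\,dt \le \int \mathcal{E}_m^G(E_t(u),E_t(f),\lambda)\,dt$ with strict inequality on a positive-measure set of levels; and (d) concluding by Theorem~\ref{setformulation} that $\mathcal{E}_m(\tilde u,f,\lambda)<\mathcal{E}_m(u,f,\lambda)$, contradicting (i). The technical bookkeeping in (c) — ensuring the gains at level $t_0$ are not canceled by losses elsewhere and that integrability is preserved — is the delicate part, but it is precisely the nonlocal analogue of \cite[Theorem~4.2]{DAG} and goes through verbatim once Proposition~\ref{Juli1sub} and Theorem~\ref{setformulation} are in hand.
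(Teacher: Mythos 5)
Your proof is correct, and in fact your first argument for (i)$\Rightarrow$(ii) already closes the theorem completely: once Lemma~\ref{ole1} supplies a function $w$ all of whose level sets minimize the geometric problem, the easy direction makes $w$ a minimizer of \eqref{VARPRO1}, so $\mathcal{E}_m(u,f,\lambda)=\mathcal{E}_m(w,f,\lambda)$, and Theorem~\ref{setformulation} turns this into the vanishing of the integral of the nonnegative function $t\mapsto \mathcal{E}_m^G(E_t(u),E_t(f),\lambda)-\mathcal{E}_m^G(E_t(w),E_t(f),\lambda)$; hence $E_t(u)$ attains the infimum for a.e.\ $t$. (Finiteness is automatic since the minimal energy is at most $\lambda\Vert f\Vert_1<\infty$.) This differs from the paper's route, which simply invokes the proof of \cite[Theorem~4.2]{DAG} --- essentially the surgical replacement argument you sketch in your second half, where submodularity is used directly to modify the level sets of a putative minimizer. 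Your shortcut pushes all the submodularity work into Lemma~\ref{ole1} (whose proof, imported from \cite{YGO}, is where Proposition~\ref{Juli1sub} is really consumed), and in exchange gets a two-line deduction of the equivalence. The one criticism is that the entire second half of your write-up --- the worry about ``crossings'' among the level sets of $u$, unbounded $f$, and the construction of $\tilde u$ via $E_t(u)\cup A$ and $E_t(u)\cap A$ --- is unnecessary given your first argument: since the comparison at each level is against the \emph{global} infimum realized by $E_t(w)$, no interference between levels can occur, and you do not need to re-run the DAG machinery. You should either commit to the short proof or to the DAG-style proof, not present the first as incomplete and the second as the ``real'' argument.
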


  In \cite[Proposition~5.5]{DAG} it is also shown that at points where the boundary of a minimizer of the geometric problem for datum $F$ and fidelity parameter $\lambda$ does not coincide with the boundary of $F$, the mean curvature  is $\pm\lambda$. Let us see that there is a nonlocal counterpart of this fact   but where the nonlocal character of the problem  gives rise to a nontrivial extension.

We recall the concept of $m$-mean curvature introduced in~\cite{MST1}.
\begin{definition}{\rm
Let $E \subset X$ be $\nu$-measurable. For a point $x  \in X$ we define  the {\it $m$-mean curvature of $\partial E$ at $x$} as
\begin{equation}\label{defcurdefdef}H^m_{\partial E}(x):= \int_{X}  (\1_{X \setminus E}(y) - \1_E(y)) dm_x(y).\end{equation}}
\end{definition}
Observe that
\begin{equation}\label{defcurp01}H^m_{\partial E}(x) =  -H^m_{\partial (X\setminus E)}(x),\end{equation}
and
\begin{equation}\label{defcur}H^m_{\partial E}(x) =  1 - 2 \int_E  dm_x(y).\end{equation}

 \begin{proposition}\label{mmeancurvatureandlambda}
Let $F\subset X$ be a $\nu$-measurable set with $ 0 < \nu(F) <\nu(X)$,  $\lambda>0$, and let $E$ be a minimizer of $\mathcal{E}_m^G(\cdot, F, \lambda)$. Let $A\subset X$ be a non-null $\nu$-measurable set, we have:

\item{ (1)} For  $\nu(A\cap F)=0$,

\item{\qquad (i) } if $\nu(A\setminus E)>0$,
$$\frac{1}{\nu(A\setminus E)}\int_{A\setminus E}H_{\partial E}^m(x)d\nu(x) \geq -  \lambda \ + \ \frac{1}{\nu(A\setminus E)}\int_{A\setminus E}m_x(A\setminus E)d\nu(x)  \ge -\lambda$$

\item{\qquad (ii) } if $\nu(A\cap E)>0$,
$$\frac{1}{\nu(A\cap E)}\int_{A\cap E}H_{\partial E}^m(x)d\nu(x) \le  - \lambda \ - \ \frac{1}{\nu(A\cap E)}\int_{A\cap E}m_x(A\cap E)d\nu(x)  \le -\lambda.$$

\item{ (2)} For   $\nu(A\setminus F)=0$,

\item{\qquad (i) } if $\nu(A\setminus E)>0$,
$$\frac{1}{\nu(A\setminus E)}\int_{A\setminus E}H_{\partial E}^m(x)d\nu(x) \geq  \, \lambda \ + \ \frac{1}{\nu(A\setminus E)}\int_{A\setminus E}m_x(A\setminus E)d\nu(x)\ge \lambda .$$

\item{\qquad (ii) } if $\nu(A\cap E)>0$,
$$\frac{1}{\nu(A\cap E )}\int_{A\cap E }H_{\partial E}^m(x)d\nu(x)  \le \, \lambda \   - \ \frac{1}{\nu(A\cap E )}\int_{A\cap E }m_x(A\cap E )d\nu(x) \le \lambda.$$

\end{proposition}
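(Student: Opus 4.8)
The plan is to exploit the minimality of $E$ by comparing $\mathcal{E}_m^G(E,F,\lambda)$ with the competitors $E\cup (A\setminus E)$ (for the ``$\geq$'' inequalities) and $E\setminus (A\cap E)$ (for the ``$\leq$'' inequalities), then to rewrite the resulting inequalities using Proposition~\ref{launion01}, Proposition~\ref{Juli1sub} and the definition of $m$-mean curvature \eqref{defcurdefdef}. I will treat case (1)(i) in detail; the other three are symmetric via \eqref{defcurp01}, Lemma~\ref{sec001}-type dualizing, and the roles of $A\setminus E$ versus $A\cap E$.

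For case (1)(i) I set $A':=A\setminus E$, so $\nu(A'\cap F)=0$ and $\nu(A')>0$, and I use $E':=E\cup A'$ as a competitor. First I compute $\nu(E'\bigtriangleup F)-\nu(E\bigtriangleup F)$: since $A'$ is disjoint from $E$ and $\nu(A'\cap F)=0$, adding $A'$ to $E$ only adds points outside $F$, so this difference equals $\nu(A')$. Next I compute $P_m(E')-P_m(E)$ using Proposition~\ref{launion01}~{\it 1} with the disjoint sets $E$ and $A'$: $P_m(E\cup A')=P_m(E)+P_m(A')-2L_m(E,A')$. Now I expand $P_m(A')$ and $L_m(E,A')$ via the definitions: $L_m(E,A')=\int_{A'}m_x(E)\,d\nu(x)$ (using reversibility to put the outer integral over $A'$), and $P_m(A')=\int_{A'}(1-m_x(A'))\,d\nu(x)$ by \eqref{secondf021}. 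Hence
\[
P_m(E')-P_m(E)=\int_{A'}\bigl(1-m_x(A')-2m_x(E)\bigr)\,d\nu(x).
\]
Recognizing from \eqref{defcur} that $H^m_{\partial E}(x)=1-2m_x(E)$, this is $\int_{A'}H^m_{\partial E}(x)\,d\nu(x)-\int_{A'}m_x(A')\,d\nu(x)$. Minimality $\mathcal{E}_m^G(E',F,\lambda)\geq\mathcal{E}_m^G(E,F,\lambda)$ then reads
\[
\int_{A'}H^m_{\partial E}(x)\,d\nu(x)-\int_{A'}m_x(A')\,d\nu(x)+\lambda\,\nu(A')\geq 0,
\]
and dividing by $\nu(A')=\nu(A\setminus E)>0$ gives exactly the first inequality of (1)(i); the second inequality ``$\geq -\lambda$'' is then immediate because $\int_{A\setminus E}m_x(A\setminus E)\,d\nu(x)\geq 0$.

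For (1)(ii) I use instead the competitor $E\setminus(A\cap E)$ and a parallel computation: with $A'':=A\cap E$, one has $\nu\bigl((E\setminus A'')\bigtriangleup F\bigr)-\nu(E\bigtriangleup F)=\nu(A'')$ again (removing points of $E$ that lie outside $F$), while $P_m(E\setminus A'')-P_m(E)$ is handled by applying Proposition~\ref{launion01}~{\it 1} to the disjoint decomposition $E=(E\setminus A'')\cup A''$ and solving for $P_m(E\setminus A'')$; the arithmetic produces $-\int_{A''}H^m_{\partial E}(x)\,d\nu(x)-\int_{A''}m_x(A'')\,d\nu(x)$, and minimality yields the stated ``$\leq -\lambda$'' bound after dividing by $\nu(A\cap E)$. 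Finally, case (2) follows from case (1) applied to the complement: replacing $F$ by $X\setminus F$ turns $\nu(A\setminus F)=0$ into $\nu(A\cap(X\setminus F))=0$, a minimizer of $\mathcal{E}_m^G(\cdot,F,\lambda)$ stays a minimizer for $X\setminus F$ up to complementation, and $H^m_{\partial E}=-H^m_{\partial(X\setminus E)}$ by \eqref{defcurp01} flips the signs appropriately; alternatively one reruns the same competitor argument directly, now the set $A\setminus E$ (resp. $A\cap E$) consists of points inside $F$, so the fidelity term \emph{decreases} by $\nu(A\setminus E)$ instead of increasing, which is what flips $-\lambda$ to $+\lambda$.

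The main obstacle is purely bookkeeping: correctly tracking the sign of the change in the fidelity term $\nu(A\bigtriangleup F)$ in each of the four cases (it depends on whether the symmetric-difference points being added or removed lie inside or outside $F$), and correctly extracting the curvature integral plus the leftover $\int m_x(\cdot)\,d\nu$ term from Proposition~\ref{launion01}. No compactness or regularity input is needed — only the already-established additivity/submodularity identities for $P_m$ and the algebraic identity \eqref{defcur} relating $H^m_{\partial E}$ to $m_x(E)$.
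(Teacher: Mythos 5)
Your overall strategy is the same as the paper's: compare $E$ with the competitors $E\cup A$ and $E\setminus(A\cap E)$, and convert the resulting perimeter difference into the curvature integral via the identity $H^m_{\partial E}(x)=1-2m_x(E)$ from \eqref{defcur}. Your computation for (1)(i) is correct and matches the paper's (the paper expands the double integrals $\int_E\int_{X\setminus E}-\int_{E\cup A}\int_{X\setminus(E\cup A)}$ directly rather than invoking Proposition~\ref{launion01}~{\it 1} together with \eqref{secondf021}, but the algebra is identical), and the complementation argument you sketch for case (2) is exactly how the paper handles its (ii) statements.

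There is, however, a sign error in your argument for (1)(ii) which, as written, breaks the conclusion. With $A''=A\cap E$ and $\nu(A''\cap F)=0$, the set $A''$ is contained in $E\setminus F$ up to a $\nu$-null set, so removing it from $E$ \emph{shrinks} the symmetric difference: $\nu\big((E\setminus A'')\bigtriangleup F\big)-\nu(E\bigtriangleup F)=-\nu(A'')$, not $+\nu(A'')$. Your perimeter computation $P_m(E\setminus A'')-P_m(E)=-\int_{A''}H^m_{\partial E}\,d\nu-\int_{A''}m_x(A'')\,d\nu$ is correct, but combined with the fidelity change $+\nu(A'')$ that you wrote, minimality would only give $\frac{1}{\nu(A'')}\int_{A''}H^m_{\partial E}\,d\nu\le\lambda-\frac{1}{\nu(A'')}\int_{A''}m_x(A'')\,d\nu$, which is the bound of (2)(ii), not the $-\lambda$ bound claimed in (1)(ii). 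With the corrected fidelity change the minimality inequality reads $-\int_{A''}H^m_{\partial E}\,d\nu-\int_{A''}m_x(A'')\,d\nu\ge\lambda\nu(A'')$ and (1)(ii) does follow. The same bookkeeping hazard lurks in your parenthetical for the direct version of case (2): there, removing $A\cap E\subset F$ from $E$ \emph{increases} the fidelity term, contrary to what the ``(resp.\ $A\cap E$)'' clause suggests. The complementation route you also propose avoids all of this and is the one the paper takes: since $P_m(E)=P_m(X\setminus E)$ and $E\bigtriangleup F=(X\setminus E)\bigtriangleup(X\setminus F)$, the set $X\setminus E$ minimizes $\mathcal{E}_m^G(\cdot,X\setminus F,\lambda)$, and \eqref{defcurp01} flips the sign of the curvature, so both (ii) statements follow from the already-established (i) statements.
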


\begin{proof}  {\it  (i)}:
 Since $E$ is a minimizer of $\mathcal{E}_m^G(\cdot, F, \lambda)$ we have that
$$P_m(E)+\lambda\nu(E\triangle F)\leq P_m(E\cup A)+\lambda\nu((E\cup A)\triangle F).$$
In other words,
$$\int_E\int_{X\setminus E}dm_x(y)d\nu(x)-\int_{E\cup A}\int_{X\setminus (E\cup A)}dm_x(y)d\nu(x)\le \lambda\left(\nu((E\cup A)\triangle F)-\nu(E\triangle F)\right),$$
but
$$\nu((E\cup A)\triangle F)-\nu(E\triangle F)=\left\{\begin{array}{cc}
    \nu(A\setminus E) & \hbox{if } \nu(A\cap F)=0 , \\[10pt]
    -\nu(A\setminus E) & \hbox{if } \nu(A\setminus F)=0 ;
  \end{array}\right.
$$
and
$$\begin{array}{l}\displaystyle\int_E\int_{X\setminus E}dm_x(y)d\nu(x)-\int_{E\cup A}\int_{X\setminus (E\cup A)}dm_x(y)d\nu(x)
\\ \\ \displaystyle  \qquad\qquad =\int_E\int_{A\setminus E}dm_x(y)d\nu(x)-\int_{A\setminus E}\int_{X\setminus E}dm_x(y)d\nu(x)+\int_{A\setminus E}\int_{A\setminus E}dm_x(y)d\nu(x)
\\ \\ \displaystyle \qquad\qquad  =\int_{A\setminus E}\int_Edm_x(y)d\nu(x)-\int_{A\setminus E}\int_{X\setminus E}dm_x(y)d\nu(x)+\int_{A\setminus E}\int_{A\setminus E}dm_x(y)d\nu(x)
\\ \\ \displaystyle
\qquad\qquad =-\int_{A\setminus E}H_{\partial E}^m(x)d\nu(x)+\int_{A\setminus E}m_x(A\setminus E)d\nu(x).
 \end{array}
 $$
Consequently:
\item{ \it (1)} If $\nu(A\cap F)=0$,  then $$\displaystyle \frac{1}{\nu(A\setminus E)}\int_{A\setminus E}H_{\partial E}^m(x)d\nu(x) \geq -  \lambda \ + \ \frac{1}{\nu(A\setminus E)}\int_{A\setminus E}m_x(A\setminus E)d\nu(x) .$$
\item{ \it (2)} If $\nu(A\setminus F)=0$, then
$$\displaystyle \frac{1}{\nu(A\setminus E)}\int_{A\setminus E}H_{\partial E}^m(x)d\nu(x) \geq  \, \lambda \ + \ \frac{1}{\nu(A\setminus E)}\int_{A\setminus E}m_x(A\setminus E)d\nu(x) .$$

 {\it (ii)}: These statements follow from {\it (i)} by~\eqref{defcurp01} and by taking into account that, since $P_m(E)=P_m(X\setminus E)$ and $E\triangle F=(X\setminus E)\triangle (X\setminus F)$, $E$ is a minimizer for $\mathcal{E}_m^G(\cdot, F, \lambda)$ if, and only if, $X\setminus E$ is a minimizer for $\mathcal{E}_m^G(\cdot, X\setminus F, \lambda)$, and, further,   that   $A\cap F=A\setminus (X\setminus F)$ and   $A\cap (X\setminus F)=A\setminus  F$.
\end{proof}

\begin{corollary}\label{mmeancurvatureandlambdaforgraphs}
Let $[X,d,m]$ be the metric random walk space associated to a connected and weighted discrete graph, and let $E$,   $F$ and $\lambda$ as in the hypothesis of Proposition \ref{mmeancurvatureandlambda}. Then
\item{ (1)}
\begin{equation}\label{gives01} \lambda \leq -H_{\partial E}^m(x) - \frac{w_{x,x}}{d_x}   \ \hbox{ for every } x\in E\setminus F,
\end{equation}
and
\begin{equation}\label{gives02}
 \lambda \leq H_{\partial E}^m(x)-   \frac{w_{x,x}}{d_x} \ \hbox{ for every } x\in F\setminus E.
\end{equation}
\item{ (2)}
\begin{equation}\label{gives03}\lambda \geq H_{\partial E}^m(x)+  \frac{w_{x,x}}{d_x}  \ \hbox{ for every } x\in E\cap F,
\end{equation}
and
\begin{equation}\label{gives04}
\lambda\geq -H_{\partial E}^m(x) + \frac{w_{x,x}}{d_x} \ \hbox{ for every } x\in X\setminus (E\cup F).
\end{equation}
 \end{corollary}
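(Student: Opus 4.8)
The plan is to derive all four estimates directly from Proposition \ref{mmeancurvatureandlambda} by choosing the auxiliary set $A$ to be a single vertex. Recall from Example \ref{JJ} (\ref{dom003}) that, for the metric random walk space associated to a connected weighted discrete graph $G$, the reversible measure satisfies $\nu(\{x\})=d_x$ and $m_x(\{x\})=w_{x,x}/d_x$ (with the convention $w_{x,x}=0$ when there is no loop at $x$). In a connected graph every vertex possessing a neighbour has $d_x>0$; a vertex with no neighbour would force $G$ to be a single point, and then there is no set $F$ with $0<\nu(F)<\nu(X)$, so the statement is vacuous. Hence we may assume $d_x>0$, so that $A:=\{x\}$ is a non-null $\nu$-measurable set and is therefore an admissible choice in Proposition \ref{mmeancurvatureandlambda}.

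With $A=\{x\}$ every averaged quantity in Proposition \ref{mmeancurvatureandlambda} collapses to a pointwise value. Indeed, if $x\notin E$ then $A\setminus E=\{x\}$ and $\frac{1}{\nu(A\setminus E)}\int_{A\setminus E}H^m_{\partial E}\,d\nu=H^m_{\partial E}(x)$ and $\frac{1}{\nu(A\setminus E)}\int_{A\setminus E}m_x(A\setminus E)\,d\nu=m_x(\{x\})=\frac{w_{x,x}}{d_x}$; symmetrically, if $x\in E$ then $A\cap E=\{x\}$ and the analogous averages over $A\cap E$ equal $H^m_{\partial E}(x)$ and $\frac{w_{x,x}}{d_x}$. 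It then remains only to run through the four sign patterns. If $x\in E\setminus F$ then $\nu(A\cap F)=0$ and $\nu(A\cap E)>0$, so Proposition \ref{mmeancurvatureandlambda}\,(1)(ii) gives $H^m_{\partial E}(x)\le-\lambda-\frac{w_{x,x}}{d_x}$, which is \eqref{gives01}. If $x\in F\setminus E$ then $\nu(A\setminus F)=0$ and $\nu(A\setminus E)>0$, so part (2)(i) gives $H^m_{\partial E}(x)\ge\lambda+\frac{w_{x,x}}{d_x}$, i.e.\ \eqref{gives02}. If $x\in E\cap F$ then $\nu(A\setminus F)=0$ and $\nu(A\cap E)>0$, so part (2)(ii) gives $H^m_{\partial E}(x)\le\lambda-\frac{w_{x,x}}{d_x}$, i.e.\ \eqref{gives03}. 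Finally, if $x\in X\setminus(E\cup F)$ then $\nu(A\cap F)=0$ and $\nu(A\setminus E)>0$, so part (1)(i) gives $H^m_{\partial E}(x)\ge-\lambda+\frac{w_{x,x}}{d_x}$, i.e.\ \eqref{gives04}.

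I do not expect a genuine obstacle here: the only points to check are that $\{x\}$ is an admissible test set (done above) and that averages over a $\nu$-atom reduce to point values (immediate). It is worth remarking that when $w_{x,x}=0$ these estimates reduce to $|\lambda|\le|H^m_{\partial E}(x)|$ on the part of $\partial E$ that does not follow $\partial F$, recovering the classical fact that the mean curvature there equals $\pm\lambda$; the term $w_{x,x}/d_x\ge0$ is precisely the nonlocal correction anticipated before the statement. If one prefers a symmetric bookkeeping of the four cases, one can equivalently phrase (i)/(ii) using $H^m_{\partial E}(x)=-H^m_{\partial(X\setminus E)}(x)$ from \eqref{defcurp01} together with the remark, used in the proof of Proposition \ref{mmeancurvatureandlambda}, that $E$ minimizes $\mathcal{E}_m^G(\cdot,F,\lambda)$ iff $X\setminus E$ minimizes $\mathcal{E}_m^G(\cdot,X\setminus F,\lambda)$.
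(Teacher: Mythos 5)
Your proof is correct and follows essentially the same route as the paper: take $A=\{x\}$, note $\nu(\{x\})=d_x>0$ by connectedness and $m_x(\{x\})=w_{x,x}/d_x$, and read off each of the four inequalities from the corresponding part (1)(i)--(ii), (2)(i)--(ii) of Proposition \ref{mmeancurvatureandlambda}; the paper treats the first case explicitly and dispatches the rest as ``similar'' or via the complementation $E\mapsto X\setminus E$, which you also mention. No gaps.
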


\begin{proof}
  {\it (1)}: If $E\setminus F\neq \emptyset$ let $x\in E\setminus F$ and take $A=\{x\}$, so that $A\cap E= A$. Note that $\nu(A)>0$ since $[X,d,m]$ is connected. Then, since $A\cap F=\emptyset$, by Proposition~\ref{mmeancurvatureandlambda} {\it (1)(ii)}, we get
    $$\frac{1}{\nu(\{x\})}\int_{\{x\}}H_{\partial E}^m(y)d\nu(y)  \le -\lambda -\frac{1}{\nu(\{x\})}\int_{\{x\}}m_y(\{x\})d\nu(y).$$
    That is,
    $  H_{\partial E}^m(x)\leq  -\lambda -m_x(\{x\}) \hbox{ for every } x\in E\setminus F,$ which gives~\eqref{gives01}.
    Now, \eqref{gives02}~can be obtained with a similar argument by using Proposition~\ref{mmeancurvatureandlambda} {\it (2)(i)}, or as follows: since $E$ is a minimizer for $\mathcal{E}_m^G(\cdot, F, \lambda)$ then $X\setminus E$ is a minimizer for $\mathcal{E}_m^G(., X\setminus F, \lambda)$. Consequently, from~\eqref{gives01},
     $  H_{\partial (X\setminus E)}^m(x)\leq  -\lambda - \frac{w_{x,x}}{d_x} \hbox{ for every } x\in (X\setminus E)\setminus (X\setminus F),$
     that is, since $ H_{\partial (X\setminus E)}^m(x)=-   H_{\partial E}^m(x)$,
     $H_{\partial E}^m(x) \geq \lambda +  \frac{w_{x,x}}{d_x} \ \hbox{ for every } x\in F\setminus E.$
     The proof of {\it (2)} is similar.
    \end{proof}

With this results at hand,
we obtain a priori estimates on the $\lambda$ for which a set $E$ can be a minimizer of $\mathcal{E}_m^G(\cdot, F, \lambda)$. Indeed,
 we must try with $\lambda$ such that
$$\begin{array}{c}\displaystyle
\max\left\{\sup_{x \in F\cap E}\left(H_{\partial E}^m(x)+  \frac{w_{x,x}}{d_x} \right),\sup_{x\notin F\cup E}\left(-H_{\partial E}^m(x)+  \frac{w_{x,x}}{d_x} \right)\right\}\le\lambda \qquad\qquad\qquad\qquad
\\ \\ \displaystyle
\qquad\qquad\qquad\qquad\le \min\left\{\inf_{x\in E\setminus F}\left(-H_{\partial E}^m(x)-  \frac{w_{x,x}}{d_x} \right),\inf_{x\in F\setminus E} \left(H_{\partial E}^m(x)-  \frac{w_{x,x}}{d_x} \right) \right\}.
\end{array}
$$

 \begin{definition}
Let $(X,d,\nu)$ be a metric measure space. For a $\nu$-measurable subset $E\subset X$ we will write $x\in\partial_\nu E$ if
\begin{equation} \nu( B_\epsilon(x)\cap E)>0 \ \hbox{ and } \  \nu( B_\epsilon(x)\setminus E)>0 \ \hbox{ for every } \epsilon>0.
\end{equation}
\end{definition}

\begin{corollary}\label{told001} Let $\Omega$ be a bounded domain in $\mathbb{R}^n$ and let $m=m^{J,\Omega}$ be the random walk  given in Example~\ref{dom001}(1). Suppose further that $\hbox{\rm supp} (J)=B_r(0)$. Let $E$, $F$ and $\lambda$ as in the hypothesis of Proposition \ref{mmeancurvatureandlambda} and suppose that $\partial_{\mathcal{L}^N} E$ is not empty. Let $x\in\partial_{\mathcal{L}^N} E $.
\item {  (i)}    If there is a neighborhood $V\subset \Omega$ of $x$ such that $\mathcal{L}^N(V\cap F)=0$ then $\mathcal{L}^N\left(B_r(x)\setminus\Omega\right)=0$
    and
$$ H_{\partial E}^m(x)=-\lambda.$$
\item {  (ii)}  If there is a neighborhood $V\subset \Omega$ of $x$ such that $\mathcal{L}^N(V\setminus F)=0$ then $\mathcal{L}^N\left(B_r(x)\setminus\Omega\right)=0$
    and
$$ H_{\partial E}^m(x) =\lambda.$$

Consequently, if $\mathcal{L}^N(B_r(x)\setminus\Omega)>0$ then $x\in\partial_{\mathcal{L}^N} F$.
\end{corollary}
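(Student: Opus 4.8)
The plan is to follow the same line as the proof of Corollary~\ref{mmeancurvatureandlambdaforgraphs}, where minimality of $E$ was tested against the single vertex $\{x\}$; in the present continuous setting $\{x\}$ is $\mathcal{L}^N$-null, so instead I would test against the balls $B_\epsilon(x)$ and let $\epsilon\downarrow 0$. Write $a(z):=\int_{\mathbb{R}^n\setminus\Omega}J(z-w)\,dw$, so that $m^{J,\Omega}_z=J(z-\cdot)\,\mathcal{L}^N\res\Omega+a(z)\,\delta_z$ and, by~\eqref{defcur}, $H^m_{\partial E}(z)=1-2\int_E J(z-w)\,dw-2a(z)\,\1_E(z)$ for $z\in\Omega$. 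Both $z\mapsto \int_E J(z-w)\,dw$ and $z\mapsto a(z)$ are continuous on $\mathbb{R}^n$, being convolutions of the $L^1$ function $J$ with the bounded functions $\1_E$ and $\1_{\mathbb{R}^n\setminus\Omega}$. In case~(i), fix a neighbourhood $V\subset\Omega$ of $x$ with $\mathcal{L}^N(V\cap F)=0$ and take $\epsilon$ small enough that $A:=B_\epsilon(x)\subset V$; then $\nu(A\cap F)=0$, while $\nu(A\setminus E)>0$ and $\nu(A\cap E)>0$ because $x\in\partial_{\mathcal{L}^N}E$, so Proposition~\ref{mmeancurvatureandlambda}(1)(i) and~(1)(ii) both apply with this $A$.

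The next step is to pass to the limit $\epsilon\downarrow 0$ in those two inequalities. For $z$ in the relevant sets one has $m_z(A\setminus E)=\int_{A\setminus E}J(z-w)\,dw+a(z)$ and $m_z(A\cap E)=\int_{A\cap E}J(z-w)\,dw+a(z)$, and $\int_{A\setminus E}J(z-w)\,dw,\ \int_{A\cap E}J(z-w)\,dw\le\int_{B_{2\epsilon}(0)}J\to 0$; using this, the continuity of $\int_E J(\cdot-w)\,dw$ and of $a$, and the fact that $B_\epsilon(x)\setminus E$ and $B_\epsilon(x)\cap E$ are nonempty and shrink to $x$, all the averages in Proposition~\ref{mmeancurvatureandlambda}(1) converge. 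The inequality from~(1)(i) becomes $1-2\int_E J(x-w)\,dw\ge a(x)-\lambda$, and the one from~(1)(ii) becomes $1-2\int_E J(x-w)\,dw-2a(x)\le-\lambda-a(x)$, whence
\[
1-2\int_E J(x-w)\,dw \;=\; a(x)-\lambda ,
\]
while the coarser bounds ``$\ge-\lambda$'' and ``$\le-\lambda$'' in the same proposition yield, in the limit, $1-2\int_E J(x-w)\,dw\ge-\lambda$ and $1-2\int_E J(x-w)\,dw-2a(x)\le-\lambda$.

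The real content, and the step I expect to be the main obstacle, is to deduce from this that $a(x)=0$ --- equivalently, since $\operatorname{supp}J=B_r(0)$, that $\mathcal{L}^N(B_r(x)\setminus\Omega)=0$. This is precisely the ``nontrivial extension'' announced before the statement: the reflecting mass $a(x)\delta_x$ of $m^{J,\Omega}_x$ enters $H^m_{\partial E}(x)$ only through the term $2a(x)\1_E(x)$, so it makes the minimality inequality obtained by \emph{adding} a small ball at $x$ genuinely asymmetric to the one obtained by \emph{removing} one; the argument must exploit this asymmetry, retaining the nonnegative self-interaction terms $\frac{1}{\nu(\cdot)}\int_{\cdot}\int_{\cdot}J(z-w)\,dw\,dz$ rather than merely discarding them, to rule out $a(x)>0$. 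Once $a(x)=0$ is established, the $\delta_x$-term drops out and the value of $H^m_{\partial E}$ at the Lebesgue boundary point $x$ is unambiguously $1-2\int_E J(x-w)\,dw=-\lambda$, which proves~(i). Part~(ii) follows by applying~(i) to $X\setminus E$, which minimizes $\mathcal{E}_m^G(\cdot,X\setminus F,\lambda)$ (because $P_m(E)=P_m(X\setminus E)$ and $E\bigtriangleup F=(X\setminus E)\bigtriangleup(X\setminus F)$) and has the same Lebesgue boundary as $E$, with hypothesis $\mathcal{L}^N(V\cap(X\setminus F))=\mathcal{L}^N(V\setminus F)=0$, using $H^m_{\partial(X\setminus E)}=-H^m_{\partial E}$ from~\eqref{defcurp01}. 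Finally, the concluding assertion is just the contrapositive of~(i) and~(ii): if $\mathcal{L}^N(B_r(x)\setminus\Omega)>0$, neither hypothesis can hold at $x$, so every neighbourhood of $x$ meets both $F$ and $X\setminus F$ in positive $\mathcal{L}^N$-measure, which is exactly $x\in\partial_{\mathcal{L}^N}F$.
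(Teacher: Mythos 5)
Your set-up (testing minimality of $E$ against $B_\epsilon(x)$, continuity of $z\mapsto\int_EJ(z-w)\,dw$ and of $a(z)=\int_{\mathbb{R}^N\setminus\Omega}J(z-w)\,dw$, passage to the limit, part (ii) by duality, and the final assertion as a contrapositive) is exactly the paper's. But the proposal stops short of the one step that carries the content of the corollary: you never prove $a(x)=0$, you only announce it as ``the main obstacle'' and gesture at exploiting an asymmetry. Worse, your own (correct) bookkeeping shows that the route you sketch cannot work: with the atomic part $a(z)\delta_z$ of $m^{J,\Omega}_z$ included in $H^m_{\partial E}(z)$, as the definition \eqref{defcurdefdef} literally requires, the $a$-terms enter \emph{both} sides of each of the two averaged inequalities of Proposition~\ref{mmeancurvatureandlambda}~(1) with the same sign (as $+\,$avg$(a)$ in (1)(i), as $-\,$avg$(a)$ in (1)(ii)) and cancel identically; what survives in the limit is the single identity $\int_{\Omega\setminus E}J(x-w)\,dw-\int_EJ(x-w)\,dw=-\lambda$, i.e.\ your $1-2\int_EJ(x-w)\,dw=a(x)-\lambda$, which leaves $a(x)$ completely undetermined. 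The self-interaction terms you propose to retain are bounded by $\int_{B_{2\epsilon}(0)}J\to0$ and contribute nothing in the limit, so they cannot rule out $a(x)>0$.

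The paper obtains $a(x)=0$ precisely because it writes the left-hand sides of the two averaged inequalities as the average of $\int_{\Omega\setminus E}J(z-y)\,dy-\int_EJ(z-y)\,dy$ over $B_\epsilon(x)\setminus E$, resp.\ over $B_\epsilon(x)\cap E$ --- that is, it uses only the absolutely continuous part of $m_z$ in the curvature --- while keeping the full $m_z(B_\epsilon(x)\setminus E)$ and $m_z(B_\epsilon(x)\cap E)$, atom included, on the right. The two $\int_{\mathbb{R}^N\setminus\Omega}J$ terms then survive with opposite signs, the limit yields $-\lambda-a(x)\ge\int_{\Omega\setminus E}J(x-y)\,dy-\int_EJ(x-y)\,dy\ge-\lambda+a(x)$, and $a(x)\le0$ follows. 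This is exactly where your accounting and the paper's diverge: if $H^m_{\partial E}(z)$ at $z\in B_\epsilon(x)\cap E$ carries the term $-a(z)$ coming from the atom (and $+a(z)$ at $z\in B_\epsilon(x)\setminus E$), the squeeze disappears and $\mathcal{L}^N(B_r(x)\setminus\Omega)=0$ does not follow from Proposition~\ref{mmeancurvatureandlambda} as stated. So the gap you flagged is genuine and is not closable along the lines you suggest; it can only be closed by adopting the atom-free reading of the averaged curvature that the paper uses (or by redoing the comparison-set computation from scratch and checking which bookkeeping is consistent with \eqref{defcurdefdef}), and as written your proof establishes only $\int_{\Omega\setminus E}J(x-w)\,dw-\int_EJ(x-w)\,dw=-\lambda$, not the two assertions of the corollary.
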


  Observe that we can rewrite this result as follows:

{\it Let $\Omega$ be a bounded domain in $\mathbb{R}^n$ and $m=m^{J,\Omega}$ be the random walk given in Example~\ref{dom001}(1),
and let $E$, $F$ and $\lambda$ as in the hypothesis of Proposition \ref{mmeancurvatureandlambda}. For   $x\in\partial_{\mathcal{L}^N} E $, either $x\in \partial_{\mathcal{L}^N} F$ or, if $x\notin \partial_{\mathcal{L}^N} F$, then $ H_{\partial E}^m(x)=\pm\lambda$.}

\begin{proof}
{\it (i)}:  Let $x\in \partial_{\mathcal{L}^N} E$ such that we can find a neighborhood $V\subset \Omega$ of $x$ with $\mathcal{L}^N(V\cap F)=0$. Then, for $\epsilon>0$ small enough, we have that $B_\epsilon(x)\subset \Omega$ and    $\nu(B_\epsilon(x)\cap F)=0$. Hence, by Proposition~\ref{mmeancurvatureandlambda}~{\it (1)} with $A=B_\epsilon(x)$ and recalling the definition of $m^{J,\Omega}$,
$$\frac{1}{\mathcal{L}^N(B_\epsilon(x)\setminus E)}\int_{B_\epsilon(x)\setminus E}\left(\int_{\Omega\setminus E}J(z-y)dy-\int_EJ(z-y)dy\right)dz$$ $$ \geq    -\lambda  +   \frac{1}{\mathcal{L}^N(B_\epsilon(x)\setminus E)}\int_{B_\epsilon(x)\setminus E}\left(\int_{B_\epsilon(x)\setminus E}J(z-y)dy+\int_{\mathbb{R}^N\setminus\Omega}J(z-y)dy\right)dz$$
  $$ \geq    -\lambda  +   \frac{1}{\mathcal{L}^N(B_\epsilon(x)\setminus E)}\int_{B_\epsilon(x)\setminus E}\left(\int_{\mathbb{R}^N\setminus\Omega}J(z-y)dy\right)dz,$$
and
$$\frac{1}{\mathcal{L}^N(B_\epsilon(x)\cap E )}\int_{B_\epsilon(x)\cap E }\left(\int_{\Omega\setminus E}J(z-y)dy-\int_EJ(z-y)dy\right) dz $$
$$ \le  -\lambda \   - \ \frac{1}{\mathcal{L}^N(B_\epsilon(x)\cap E )}\int_{B_\epsilon(x)\cap E }\left(\int_{B_\epsilon(x)\cap E}J(z-y)dy+\int_{\mathbb{R}^N\setminus\Omega}J(z-y)dy\right)dz $$
$$ \le  -\lambda \   - \ \frac{1}{\mathcal{L}^N(B_\epsilon(x)\cap E )}\int_{B_\epsilon(x)\cap E }\left(\int_{\mathbb{R}^N\setminus\Omega}J(z-y)dy\right)dz .$$
Now, since, $\displaystyle z\mapsto \int_{\Omega\setminus E}J(z-y)dy-\int_EJ(z-y)dy$ and $\displaystyle z\mapsto  \int_{\mathbb{R}^N\setminus\Omega}J(z-y)dy$ are continuous, we get, by letting $\epsilon$ tend to $0$ in the above inequalities, that
$$\begin{array}{l}
\displaystyle
-\lambda-\int_{\mathbb{R}^N\setminus\Omega}J(x-y)dy \ge \int_{\Omega\setminus E}J(x-y)dy-\int_EJ(x-y)dy\\ \\
\displaystyle
\phantom{-\lambda-\int_{\mathbb{R}^N\setminus\Omega}J(x-y)dy}
\ge -\lambda+\int_{\mathbb{R}^N\setminus\Omega}J(x-y)dy.
\end{array}$$
Now, this implies that $\displaystyle\int_{\mathbb{R}^N\setminus\Omega}J(x-y)dy=0$, and
$$ H_{\partial E}^m(x)=\int_{\Omega\setminus E}J(x-y)dy-\int_EJ(x-y)dy=-\lambda.$$
A similar proof using Proposition~\ref{mmeancurvatureandlambda}~{\it (2)}  gives {\it (ii)}.
 \end{proof}

\begin{definition}
Let $[X,d,m]$ be a metric random walk space and $x\in X$. The random walk $m$ has the strong-Feller property at $x$ if
$$\lim_{y\to x}m_y(E)=m_x(E) \ \hbox{ for every $\nu$-measurable set $E$}.$$
\end{definition}

 Note that the examples of metric random walk spaces given in   Example \ref{JJ} (1--3) have the strong-Feller property. Consequently, some of the results of Corollary \ref{told001} will follow from the following results. However, since part of the Corollary is particular to the $m^{J,\Omega}$ random walk and, moreover, this random walk is of special importance in our development, we give these results separately.

\begin{lemma}\label{lemmadiflebFeller}
Let $[X,d,m]$ be a metric random walk space with invariant measure $\nu$, and let $x\in X$. Suppose that the random walk has the strong-Feller property at $x$, then, for a sequence of $\nu$-measurable sets $A_n\subset B(x,\frac1n)$ with $\nu(A_n)>0$, $n\in\N$, we have:
$$\lim_{n\to\infty}\frac{1}{\nu(A_n)}\int_{A_n}m_y(E)d\nu(y)=m_x(E) \hbox{ for any $\nu$-measurable set } E\subset X .$$

In particular, $$\lim_{n\to\infty}\frac{1}{\nu(A_n)}\int_{A_n}H_{\partial E}^m(y)d\nu(y)=H_{\partial E}^m(x) \hbox{ for any $\nu$-measurable set } E\subset X .$$
\end{lemma}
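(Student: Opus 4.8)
The plan is to read this as an averaged form of the pointwise continuity statement that the strong-Feller property provides. Fix a $\nu$-measurable set $E\subset X$ and set $\varphi(y):=m_y(E)$. By the invariance of $\nu$ the map $\varphi$ is $\nu$-measurable, and since each $m_y$ is a probability measure, $0\le\varphi\le 1$; as $\nu$ is finite and $0<\nu(A_n)<\infty$ for every $n$, all the averages $\frac{1}{\nu(A_n)}\int_{A_n}\varphi\,d\nu$ are well defined and finite.

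First I would fix $\varepsilon>0$ and use the strong-Feller property at $x$ to produce $\delta>0$ such that $|\varphi(y)-\varphi(x)|<\varepsilon$ whenever $d(y,x)<\delta$. Choosing $N\in\N$ with $\frac1N<\delta$, for every $n\ge N$ one has $A_n\subset B(x,\tfrac1n)\subset B(x,\delta)$, hence $|\varphi(y)-\varphi(x)|<\varepsilon$ for all $y\in A_n$, so that
$$\left|\frac{1}{\nu(A_n)}\int_{A_n}m_y(E)\,d\nu(y)-m_x(E)\right|\le\frac{1}{\nu(A_n)}\int_{A_n}|\varphi(y)-\varphi(x)|\,d\nu(y)<\varepsilon.$$
Since $\varepsilon>0$ was arbitrary, this is precisely the first assertion.

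For the ``in particular'' part I would invoke~\eqref{defcur}, which says $H^m_{\partial E}(y)=1-2\int_E dm_y(y')=1-2\,m_y(E)$, i.e. $H^m_{\partial E}$ is an affine function of $y\mapsto m_y(E)$. Therefore
$$\frac{1}{\nu(A_n)}\int_{A_n}H^m_{\partial E}(y)\,d\nu(y)=1-\frac{2}{\nu(A_n)}\int_{A_n}m_y(E)\,d\nu(y)\longrightarrow 1-2\,m_x(E)=H^m_{\partial E}(x),$$
by the first part. There is essentially no obstacle here: the only points worth checking are the $\nu$-measurability of $y\mapsto m_y(E)$ (from the invariance of $\nu$) and the observation that the conclusion uses the sets $A_n$ only through the shrinking condition $A_n\subset B(x,\tfrac1n)$, the normalizing weights $1/\nu(A_n)$ being irrelevant to the estimate. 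The same argument in fact works verbatim whenever $0<\nu(A_n)<\infty$, so the finiteness of $\nu$ is not genuinely needed.
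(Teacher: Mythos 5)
Your argument is correct and is essentially identical to the paper's own proof: both fix $\varepsilon>0$, use the strong-Feller property at $x$ to make $|m_y(E)-m_x(E)|<\varepsilon$ on a small ball containing $A_n$ for $n$ large, and bound the averaged difference by $\varepsilon$; the ``in particular'' part then follows from the identity $H^m_{\partial E}(y)=1-2\,m_y(E)$ of~\eqref{defcur}, exactly as the paper intends. Your side remarks (measurability of $y\mapsto m_y(E)$, and that only $0<\nu(A_n)<\infty$ is needed) are accurate but do not change the route.
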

\begin{proof}
Let $E\subset X$ be a $\nu$-measurable set. Since the random walk has the strong-Feller property at $x$, there exists $n_0\in\N$ such that $|m_y(E)-m_x(E)|<\varepsilon$ for every $y\in B(x,\frac{1}{n_0})$. Then, for $n\ge n_0$, since $A_n\subset B(x,\frac1n)$, we have
$$\left| \frac{1}{\nu(A_n)}\int_{A_n}m_y(E)d\nu(y) - m_x(E) \right| \le \frac{1}{\nu(A_n)}\int_{A_n}|m_y(E)-m_x(E)|d\nu(y)<\varepsilon .$$

\end{proof}

\begin{proposition}
Let $[X,d,m]$ be a metric random walk space with  reversible  measure~$\nu$. Let $E$, $F$ and $\lambda$ as in the hypothesis of Proposition \ref{mmeancurvatureandlambda}. Let $x\in \partial_\nu E$ and suppose that the random walk has the strong-Feller property at $x$. The following holds:
\item{ (1)} If there is   a neighbourhood $V$ of $x$ such that $\nu(V\cap F)=0$, then
$$H_{\partial E}^m(x) =  - \lambda .$$
\item{ (2)} If there is   a neighbourhood $V$ of $x$ such that  $\nu(V\setminus F)=0$, then
 $$H_{\partial E}^m(x) =   \lambda .$$

 In particular, if $[X,d,m]$ has the strong-Feller property, then
 \item{ (1)}$$H_{\partial E}^m(x) =  - \lambda \ \hbox{ for every } x\in\partial_\nu E\cap \hbox{int}(X\setminus F).$$
\item{ (2)}  $$H_{\partial E}^m(x) =   \lambda \ \hbox{ for every } x\in\partial_\nu E\cap \hbox{int}(F).$$
\end{proposition}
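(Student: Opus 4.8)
The plan is to mimic the proof of Corollary~\ref{told001}, but to replace the explicit continuity of the kernel $J$ used there by the strong-Feller property, accessed through Lemma~\ref{lemmadiflebFeller}. To prove~(1), I would fix $x\in\partial_\nu E$ together with a neighbourhood $V$ of $x$ satisfying $\nu(V\cap F)=0$, and set $A_n:=B(x,\tfrac1n)$. Since $V$ is a neighbourhood of $x$, we have $A_n\subset V$ for all $n$ large enough, and hence $\nu(A_n\cap F)=0$; moreover, because $x\in\partial_\nu E$, both $\nu(A_n\cap E)>0$ and $\nu(A_n\setminus E)>0$ for every $n$. Thus, for $n$ large, Proposition~\ref{mmeancurvatureandlambda}~(1) applies with $A=A_n$ in \emph{both} of its branches~(i) and~(ii); discarding the nonnegative averaged $m$-terms appearing there, it yields
\[
\frac{1}{\nu(A_n\setminus E)}\int_{A_n\setminus E}H^m_{\partial E}(y)\,d\nu(y)\ \ge\ -\lambda
\qquad\text{and}\qquad
\frac{1}{\nu(A_n\cap E)}\int_{A_n\cap E}H^m_{\partial E}(y)\,d\nu(y)\ \le\ -\lambda .
\]

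Next I would let $n\to\infty$. The sets $A_n\setminus E$ and $A_n\cap E$ are $\nu$-measurable subsets of $B(x,\tfrac1n)$ of positive $\nu$-measure, so Lemma~\ref{lemmadiflebFeller} (this is the only place the strong-Feller property at $x$ enters) shows that the left-hand sides of the two displayed inequalities both converge to $H^m_{\partial E}(x)$. Passing to the limit therefore gives $H^m_{\partial E}(x)\ge-\lambda$ and $H^m_{\partial E}(x)\le-\lambda$, i.e. $H^m_{\partial E}(x)=-\lambda$, which is~(1). Statement~(2) is obtained in exactly the same way, invoking Proposition~\ref{mmeancurvatureandlambda}~(2) in place of~(1); there the two inequalities read $\ge\lambda$ and $\le\lambda$, so one concludes $H^m_{\partial E}(x)=\lambda$. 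For the ``in particular'' assertions, if $[X,d,m]$ has the strong-Feller property everywhere and $x\in\partial_\nu E\cap\hbox{int}(X\setminus F)$, then some open neighbourhood $V$ of $x$ is contained in $X\setminus F$, so $V\cap F=\emptyset$ and in particular $\nu(V\cap F)=0$; then apply~(1). Likewise, if $x\in\partial_\nu E\cap\hbox{int}(F)$, then some open $V\ni x$ lies in $F$, whence $\nu(V\setminus F)=0$, and~(2) applies.

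I do not expect a genuine obstacle: everything reduces to correctly invoking the two earlier results. The one point requiring a little care is that the hypothesis $x\in\partial_\nu E$ is precisely what makes \emph{both} the ``$A\setminus E$'' and the ``$A\cap E$'' inequalities of Proposition~\ref{mmeancurvatureandlambda} available simultaneously — it is the combination of these two one-sided bounds that pins $H^m_{\partial E}(x)$ down to the single value $-\lambda$ (resp.\ $\lambda$). Note also that, unlike in Corollary~\ref{told001}, no information on the behaviour of $m_y\big(B(x,\tfrac1n)\big)$ as $n\to\infty$ is needed, since the extra averaged terms in Proposition~\ref{mmeancurvatureandlambda} are nonnegative and may simply be dropped before taking the limit.
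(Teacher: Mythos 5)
Your proposal is correct and follows essentially the same route as the paper: both apply Proposition~\ref{mmeancurvatureandlambda}~(1)(i) and (1)(ii) to small balls around $x$ (the paper takes $A_n=B(x,\frac1n)\setminus E$ and $A_n=B(x,\frac1n)\cap E$ separately, which yields the identical averages as your single choice $A=B(x,\frac1n)$), and then passes to the limit via Lemma~\ref{lemmadiflebFeller} to squeeze $H^m_{\partial E}(x)$ between $-\lambda$ and $-\lambda$. Your observation that $x\in\partial_\nu E$ is exactly what makes both one-sided bounds available, and that the extra averaged $m$-terms can simply be dropped, matches the paper's argument.
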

\begin{proof}
  The proof follows by Proposition \ref{mmeancurvatureandlambda} and Lemma \ref{lemmadiflebFeller}. Indeed, let us prove \textit{(1)}. Take $A_n=B(x,\frac1n)\setminus E$, then, since $x\in \partial_\nu E$ we have that $\nu(A_n)>0$, and, moreover, since $V$ is a neighborhood of $x$, $\nu(A_n\cap F)\leq \nu(V\cap F)=0$ for $n$ large enough. Therefore, by \textit{(1)(i)} in Proposition \ref{mmeancurvatureandlambda}, we have
  $$\frac{1}{\nu(A_n)}\int_{A_n}H_{\partial E}^m(y)d\nu(y) \geq -\lambda \ \hbox{ for $n$ large enough},$$
  and taking limits when $n\to\infty$, by Lemma \ref{lemmadiflebFeller}, we get that $H_{\partial E}^m(x) \ge -\lambda $.

  To prove the opposite inequality we proceed equally by taking $A_n=B(x,\frac1n)\cap E$ and using \textit{(1)(ii)} in Proposition \ref{mmeancurvatureandlambda}.
\end{proof}

\subsection{Thresholding Parameters}
 In the local case it is well known (see~\cite{ChanEsedoglu}) that for $f= \1_{B_r(0)}$ the solution $u_\lambda$ of problem \eqref{BV1decomp} is given by:

\noindent {\it (i)} $u_\lambda = 0$ if $0< \lambda\le\frac{2}{r}$,

\noindent {\it (ii)} $ u_\lambda = c\1_{B_r(0)}$ with $0\le c\le 1$ if $\lambda=\frac{2}{r}$,

\noindent {\it (iii)} $u_\lambda = \1_{B_r(0)}$ if $\lambda\ge\frac{2}{r}$.

  \noindent In~\cite[Proposition 5.2]{DAG}  it is shown that this thresholding property holds true for a large class of calibrable sets in $\mathbb{R}^2$. Our goal now is to show   that there is also a thresholding property in the nonlocal case treated here.
\medskip

 For a constant $c$, we will abuse notation and denote the constant function $c\1_X$ by $c$ whenever this is not misleading.

  \begin{lemma}\label{proper001} Let $\lambda_0>0$.
\item{ (i)} If $f\in M(f,\lambda_0)$ then
$$\{f\}=  M(f,\lambda)\quad\forall \lambda>\lambda_0.$$
\item{ (ii)}  If $c  \in M(f,\lambda_0)$, $c\ge 0$ constant, then $c\in{\rm med}_\nu(f)$,
$${\rm med}_\nu(f) \subset   M(f,\lambda_0), $$
    and
$$    {\rm med}_\nu(f)=   M(f,\lambda)\quad\forall 0<\lambda<\lambda_0.$$
\item{ (iii)}   If $u\in M(f,\lambda_0)$ and $u\in M(f,\lambda_1)$ for some $u\in L^1(X,\nu)$ and $\lambda_0<\lambda_1$, then $u\in M(f,\lambda)$ for every $\lambda_0\le\lambda\le\lambda_1$.
    \end{lemma}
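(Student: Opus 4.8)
The plan is to exploit two structural features that are already available: the monotonicity of $M(f,\lambda)$ in $\lambda$ (through Remark \ref{saltos1} and Proposition \ref{sec002}) and the fact that the fidelity term $\mathcal{G}_f(u)=\int_X|u-f|\,d\nu$ is linear along the segment joining a minimizer to $f$.

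For item \textit{(i)}, suppose $f\in M(f,\lambda_0)$. Then $\mathcal{E}_m(f,\lambda_0)=TV_m(f)$ because the fidelity term vanishes at $u=f$, and since $0\le TV_m(f)\le\mathcal{E}_m(u,f,\lambda_0)$ for every $u$, in fact $TV_m(f)=\min_u TV_m(u)$ restricted to... more precisely, for any competitor $u$ we have $TV_m(f)=\mathcal{E}_m(f,\lambda_0)\le\mathcal{E}_m(u,f,\lambda_0)=TV_m(u)+\lambda_0\|u-f\|_1$. Now fix $\lambda>\lambda_0$ and let $u\in M(f,\lambda)$. From the last inequality applied to this $u$ together with the trivial bound $\mathcal{E}_m(u,f,\lambda)\le\mathcal{E}_m(f,f,\lambda)=TV_m(f)$, i.e. $TV_m(u)+\lambda\|u-f\|_1\le TV_m(f)\le TV_m(u)+\lambda_0\|u-f\|_1$, we get $(\lambda-\lambda_0)\|u-f\|_1\le 0$, hence $u=f$ $\nu$-a.e.; and $f$ is indeed a minimizer at level $\lambda$ since $\mathcal{E}_m(f,f,\lambda)=TV_m(f)\le\mathcal{E}_m(u,f,\lambda)$ for all $u$ by the same chain of inequalities. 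This gives $\{f\}=M(f,\lambda)$.

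For item \textit{(ii)}, assume $c\in M(f,\lambda_0)$ with $c\ge0$ constant. Since $TV_m(c)=0$, minimality gives $0+\lambda_0\|c-f\|_1\le TV_m(u)+\lambda_0\|u-f\|_1$ for all $u$; taking $u=c'$ any other constant shows $\|c-f\|_1\le\|c'-f\|_1$, so by \eqref{NNsigg1} we get $c\in{\rm med}_\nu(f)$, and conversely any $c'\in{\rm med}_\nu(f)$ also realizes the minimum $\lambda_0\min_{k}\|k-f\|_1$, which is $\le\min_u\mathcal{E}_m(u,f,\lambda_0)=\mathcal{E}_m(c,f,\lambda_0)=\lambda_0\|c-f\|_1$, so ${\rm med}_\nu(f)\subset M(f,\lambda_0)$. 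For $0<\lambda<\lambda_0$: if $w\in{\rm med}_\nu(f)$ then $\mathcal{E}_m(w,f,\lambda)=\lambda\|w-f\|_1$; for any $u$, using that $c$ is a minimizer at $\lambda_0$ one has $\lambda_0\|c-f\|_1\le TV_m(u)+\lambda_0\|u-f\|_1$, and since $\|w-f\|_1=\|c-f\|_1$ and $\lambda<\lambda_0$, a convexity/scaling estimate ($\lambda\|w-f\|_1\le\frac{\lambda}{\lambda_0}(TV_m(u)+\lambda_0\|u-f\|_1)\le TV_m(u)+\lambda\|u-f\|_1$, using $\frac{\lambda}{\lambda_0}<1$ and both summands nonnegative) shows $w\in M(f,\lambda)$. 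The reverse inclusion $M(f,\lambda)\subset{\rm med}_\nu(f)$ follows because a minimizer $u$ at small $\lambda$ must itself be constant: otherwise $TV_m(u)>0$ (ergodicity), but then comparing $\mathcal{E}_m(u,f,\lambda)$ with $\mathcal{E}_m(c,f,\lambda)=\lambda\|c-f\|_1$ and using monotonicity of $M(f,\cdot)$ together with $c$ being a minimizer for all $\lambda\le\lambda_0$ forces $TV_m(u)=0$; then $u$ is a constant minimizing $\|u-f\|_1$, hence a median.

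For item \textit{(iii)}, this is simply convexity of the value function together with concavity in $\lambda$ (Proposition \ref{sec002}). If $u\in M(f,\lambda_0)\cap M(f,\lambda_1)$ and $\lambda=(1-\theta)\lambda_0+\theta\lambda_1$ with $\theta\in[0,1]$, then $\mathcal{E}_m(u,f,\lambda)=(1-\theta)\mathcal{E}_m(u,f,\lambda_0)+\theta\mathcal{E}_m(u,f,\lambda_1)=(1-\theta)\mathcal{E}_m(f,\lambda_0)+\theta\mathcal{E}_m(f,\lambda_1)\le\mathcal{E}_m(f,\lambda)$, where the last inequality is concavity of $\lambda\mapsto\mathcal{E}_m(f,\lambda)$; since always $\mathcal{E}_m(u,f,\lambda)\ge\mathcal{E}_m(f,\lambda)$, equality holds and $u\in M(f,\lambda)$.

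The main obstacle I expect is the reverse inclusion $M(f,\lambda)\subset{\rm med}_\nu(f)$ in \textit{(ii)}: one has to rule out nonconstant minimizers at small $\lambda$, and the cleanest route is to invoke that once a constant belongs to $M(f,\lambda_0)$ it belongs to $M(f,\lambda)$ for every $\lambda\le\lambda_0$ (the forward direction just proved), then use that any minimizer $u$ at level $\lambda$ satisfies $TV_m(u)+\lambda\|u-f\|_1=\lambda\|c-f\|_1$ with $\|u-f\|_1\ge\|c-f\|_1$ (medians minimize the $L^1$ distance), forcing $TV_m(u)=0$ and $\|u-f\|_1=\|c-f\|_1$, whence $u$ is a constant median by \eqref{NNsigg1}. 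The Euler--Lagrange characterization of Theorem \ref{El1} can be used as an alternative bookkeeping device but is not strictly needed.
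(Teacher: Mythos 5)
Your items \textit{(i)} and \textit{(iii)} are correct, and item \textit{(iii)} in particular supplies the convexity-in-$\lambda$ argument that the paper dismisses with ``follows easily.'' In item \textit{(ii)} your route to $c\in{\rm med}_\nu(f)$ (compare $c$ against other constants, which have zero total variation, and invoke \eqref{NNsigg1}) is more elementary than the paper's, which goes through the Euler--Lagrange characterization of Theorem \ref{El1} and the antisymmetry of $\g$ to get $\int_X\xi\,d\nu=0$ and then \eqref{Nsigg1}; likewise your scaling inequality $\lambda\Vert w-f\Vert_1\le\frac{\lambda}{\lambda_0}\mathcal{E}_m(u,f,\lambda_0)\le\mathcal{E}_m(u,f,\lambda)$ replaces the paper's rescaling of the vector field ($\g_\lambda=\frac{\lambda}{\lambda_0}\g$). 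Both of these substitutions are fine.

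The one genuine problem is the reverse inclusion $M(f,\lambda)\subset{\rm med}_\nu(f)$ for $0<\lambda<\lambda_0$. In the body of \textit{(ii)} you only gesture at it (``using monotonicity of $M(f,\cdot)$ \dots forces $TV_m(u)=0$''), and in your closing paragraph the concrete step rests on the claim $\Vert u-f\Vert_1\ge\Vert c-f\Vert_1$ justified by ``medians minimize the $L^1$ distance.'' That justification is false: by \eqref{NNsigg1} a median minimizes $\Vert\cdot-f\Vert_1$ only over \emph{constants}, and a general competitor $u$ (e.g.\ $u=f$) can have strictly smaller fidelity. The inequality you want is nevertheless true, but its correct source is Remark \ref{saltos1} (fidelity of minimizers is nonincreasing in $\lambda$), applied to $u\in M(f,\lambda)$ and $c\in M(f,\lambda_0)$ with $\lambda<\lambda_0$; with that in hand, $TV_m(u)+\lambda\Vert u-f\Vert_1=\lambda\Vert c-f\Vert_1\le\lambda\Vert u-f\Vert_1$ gives $TV_m(u)=0$, ergodicity makes $u$ constant, and $\Vert u-f\Vert_1=\Vert c-f\Vert_1$ makes it a median. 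The paper instead argues by contradiction without Remark \ref{saltos1}: if $u\in M(f,\lambda)$ is nonconstant then $TV_m(u)>0$ forces the \emph{strict} inequality $\Vert u-f\Vert_1<\Vert c-f\Vert_1$, and then
\begin{equation}
\mathcal{E}_m(u,f,\lambda_0)=\mathcal{E}_m(u,f,\lambda)+(\lambda_0-\lambda)\Vert u-f\Vert_1<\mathcal{E}_m(c,f,\lambda)+(\lambda_0-\lambda)\Vert c-f\Vert_1=\mathcal{E}_m(c,f,\lambda_0),
\end{equation}
contradicting $c\in M(f,\lambda_0)$. Either patch closes the gap; as written, your argument does not.
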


\begin{proof}
\item{ (i)}: Take $\lambda> \lambda_0$, then, for any $u\in L^1(X,\nu)$ such that $\nu(\{u\neq f\})>0$, we have
$$\mathcal{E}_m(f, f, \lambda)=TV_m(f)=\mathcal{E}_m(f, f, \lambda_0)\le
\mathcal{E}_m(u, f, \lambda_0)<\mathcal{E}_m(u, f, \lambda).$$

\item{ (ii)}:  Since $c \in M(f,\lambda_0)$ we have that, by Theorem \ref{El1}, there exists $\xi\in \hbox{sign}(c -f)$ and $\hbox{\bf g}\in L^\infty(X\times X, \nu \otimes m_x)$ antisymmetric with $\Vert \hbox{\bf g} \Vert_\infty \leq 1$ satisfying
$$\int_{X}\g(x,y)\,dm_x(y)= \lambda_0  \xi(x) \quad \hbox{for } \nu-\mbox{a.e }x\in X  \hbox{ and}$$
$$\hbox{\bf g}(x,y) \in {\rm sign}(0) \quad \hbox{for }(\nu \otimes m_x)-a.e. \ (x,y) \in X \times X.$$
Then,
$$ \int_X \xi d\nu(x)= \frac{1}{\lambda_0}\int_X \int_{X}\g(x,y)\,dm_x(y) d \nu(x) =0,$$
so that $0\in {\rm med}_\nu(c -f)$, which is equivalent to $c\in{\rm med}_\nu(f)$.
Now, for $\lambda<\lambda_0$, taking $g_\lambda(x,y)=\frac{\lambda}{\lambda_0}g(x,y)$ we obtain that
\begin{equation}\label{ply001}c\in M(f,\lambda).
\end{equation}

  Furthermore, by \eqref{NNsigg1}, for any other $m\in \hbox{med}_\nu(f)$ and any $\lambda>0$,
$$ \mathcal{E}(c,f,\lambda_0)=\lambda\int_X|c-f|d\nu=\lambda\int_X|m-f|d\nu=  \mathcal{E}(m,f,\lambda),$$
so that
$$\hbox{med}_\nu(f)\subset M(f,\lambda_0), \quad\forall 0<\lambda<\lambda_0.$$

 Now, let $m\in \hbox{med}_\nu(f)$, for any constant function $k\notin \hbox{med}_\nu(f)$, by \eqref{NNsigg1} we have that
$$\int_X|k-f|d\nu>\int_X|m-f|d\nu$$
so $k\notin M(f,\lambda)$ for every $\lambda>0$.

Suppose then that there exists some nonconstant function $u$, such that $u\in M(f,\lambda)$ for $0<\lambda<\lambda_0$. Since $\nu$ is ergodic with respect to $m$ we have that $TV_m(u)>0$, thus
$$\mathcal{E}(u,f,\lambda)\leq\mathcal{E}(m,f,\lambda)$$
implies that
$$\int_X|u-f|d\nu<\int_X|m-f|$$
and, therefore,
$$\mathcal{E}(u,f,\lambda_0)=\mathcal{E}(u,f,\lambda)+(\lambda_0-\lambda)\int_X|u-f|d\nu < \qquad \qquad \qquad \qquad$$
$$\qquad \qquad \qquad \qquad <\mathcal{E}(m,f,\lambda)+(\lambda_0-\lambda)\int_X|m-f|d\nu=\mathcal{E}(m,f,\lambda_0)$$
which is a contradiction.
Consequently,
$$\hbox{med}_\nu(f)= M(f,\lambda)\quad\forall 0<\lambda<\lambda_0.$$

\item{ (iii)}  Follows easily.
\end{proof}

 \begin{proposition}\label{proper002}  Let $(\lambda_0, u_0)$  be an  $m$-eigenpair of the $1$-Laplacian $\Delta^m_1$ on $X$ with $\lambda_0>0$. Then, $0\in  \hbox{med}_\nu(u_0)$ and
 $$\left\{ \begin{array}{lll}  \hbox{med}_\nu(u_0)=M(u_0,\lambda) \quad &\hbox{ if} \ \ 0 < \lambda < \lambda_0, \\ \\ \{ c u_0 \, : \, 0 \leq c \leq 1 \}\cup\hbox{med}_\nu(u_0)  \subset M(u_0, \lambda_0)
 \quad & \
 \\ \\ \{ u_0 \}  = M(u_0, \lambda) \quad &\hbox{ if} \ \  \lambda > \lambda_0.
 \end{array} \right.$$
\end{proposition}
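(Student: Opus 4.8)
The plan is to reduce everything to Theorem~\ref{El1} and Lemma~\ref{proper001}. First I would unpack the hypothesis: by Definition~\ref{Ndefeigenpair} together with Definition~\ref{el1laplac}, saying that $(\lambda_0,u_0)$ is an $m$-eigenpair means there is $\xi\in{\rm sign}(u_0)$ with $\lambda_0\xi\in\partial\mathcal F_m(u_0)$. By \eqref{perfect1}--\eqref{porfin1} this already forces $\lambda_0\xi\in G_m(X)$ with $\Vert\lambda_0\xi\Vert_{m,*}\le1$, i.e.\ $\lambda_0\xi\in\partial\mathcal F_m(0)$ as well. To obtain $0\in{\rm med}_\nu(u_0)$ I would write $\lambda_0\xi={\rm div}_m\z$ with $\z\in X_m^2(X)$ and integrate over $X$: Green's formula~\eqref{Green} applied with the constant function $u\equiv1$ (whose nonlocal gradient vanishes identically) gives $\int_X\lambda_0\xi\,d\nu=\int_X{\rm div}_m\z\,d\nu=0$, hence $\int_X\xi\,d\nu=0$, and then \eqref{Nsigg1} yields $0\in{\rm med}_\nu(u_0)$.

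The crux is to show that the \emph{same} multiplier $\lambda_0\xi$ is a subgradient of $\mathcal F_m$ at every point of the segment $\{cu_0:0\le c\le1\}$. For $c=0$ this was just observed; for $c\in(0,1]$ it follows from the positive $1$-homogeneity of $\mathcal F_m$ (so $\partial\mathcal F_m(cu_0)=\partial\mathcal F_m(u_0)\ni\lambda_0\xi$), or directly from \eqref{perfect1}, since $\lambda_0\xi\in G_m(X)$, $\Vert\lambda_0\xi\Vert_{m,*}\le1$ and $\lambda_0\int_X(cu_0)\xi\,d\nu=c\,TV_m(u_0)=TV_m(cu_0)$. Next I would check that $-\xi$ is an admissible selection of ${\rm sign}(cu_0-u_0)={\rm sign}\big((c-1)u_0\big)$ for every $c\in[0,1]$: on $\{u_0>0\}$ one has $-\xi=-1$, on $\{u_0<0\}$ one has $-\xi=1$, and on $\{u_0=0\}$ one has $-\xi\in[-1,1]$, which matches the required sign of $(c-1)u_0$ in each case. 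Since $\Delta_1^m(cu_0)=-\partial\mathcal F_m(cu_0)$ by Definition~\ref{el1laplac}, this gives $\lambda_0(-\xi)\in\Delta_1^m(cu_0)$, and, as $u_0\in L^2(X,\nu)$, Theorem~\ref{El1} then yields $cu_0\in M(u_0,\lambda_0)$ for every $0\le c\le1$; in particular $0\in M(u_0,\lambda_0)$ and $u_0\in M(u_0,\lambda_0)$.

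Finally I would assemble the three regimes. Because $0$ is a nonnegative constant lying in $M(u_0,\lambda_0)$, Lemma~\ref{proper001}\,(ii) gives ${\rm med}_\nu(u_0)\subset M(u_0,\lambda_0)$ and ${\rm med}_\nu(u_0)=M(u_0,\lambda)$ for every $0<\lambda<\lambda_0$; together with the segment inclusion of the previous step this is exactly the first two lines of the statement. For $\lambda>\lambda_0$, since $u_0\in M(u_0,\lambda_0)$, Lemma~\ref{proper001}\,(i) gives $M(u_0,\lambda)=\{u_0\}$. I expect the main obstacle to be the sign bookkeeping in translating between $\Delta_1^m$, $\partial\mathcal F_m$ and ${\rm sign}(\cdot)$, together with the observation that $\lambda_0\xi$ works as a subgradient along the whole segment $[0,u_0]$---this is precisely what makes every $cu_0$, $0\le c\le1$, a minimizer at the threshold $\lambda_0$; once these are in place, everything else is a routine application of Theorem~\ref{El1} and Lemma~\ref{proper001}.
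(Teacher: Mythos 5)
Your proof is correct and follows essentially the same route as the paper: produce $\xi_0\in{\rm sign}(u_0)$ with $-\lambda_0\xi_0\in\Delta_1^m u_0$, observe that $-\xi_0\in{\rm sign}(cu_0-u_0)$ and $\Delta_1^m(cu_0)=\Delta_1^m(u_0)$ to get $cu_0\in M(u_0,\lambda_0)$ via Theorem~\ref{El1}, then invoke Lemma~\ref{proper001}. The only (harmless) deviations are that you derive $0\in{\rm med}_\nu(u_0)$ directly from Green's formula where the paper cites a result of \cite{MST1}, and you handle $c=0$ through $\partial\mathcal F_m(0)$ where the paper uses the energy equality $\mathcal E_m(u_0,u_0,\lambda_0)=\lambda_0\Vert u_0\Vert_1=\mathcal E_m(0,u_0,\lambda_0)$.
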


\begin{proof}
Since $(\lambda_0, u_0)$ is an $m$-eigenpair of the $1$-Laplacian $\Delta^m_1$ with $\lambda_0>0$, we have that $0\in  \hbox{med}_\nu(u_0)$ (see \cite[Corollary 6.11]{MST1}). Furthermore, by the definition of $m$-eigenpair, we have that
$$\hbox{there exists} \ \xi_0 \in \hbox{sign}(u_0) \ \hbox{ such that }  - \lambda_0 \xi_0 \in \Delta_1^m(u_0).$$
Hence, for $0 < c \leq 1$, $\xi := - \xi_0 \in \hbox{sign}(c u_0 -u_0)$ and  $\lambda_0 \xi \in \Delta_1^m(u_0) = \Delta^m_1(cu_0)$, which implies that $cu_0 \in M(u_0, \lambda_0)$. Moreover, since $TV_m(u_0)=\lambda_0$ (see \cite[Remark 6.2]{MST1}) and $\Vert u_0\Vert_1 =1$, we have that
$$\mathcal{E}(u_0,u_0,\lambda_0)=\lambda_0=\mathcal{E}(0,u_0,\lambda_0).$$
Consequently,  by Lemma~\ref{proper001},  we get the rest of the thesis.

\end{proof}

 \medskip

In \cite[Proposition 6.12] {MST1} we showed that  if  $(\lambda_0, u_0)$  is an  $m$-eigenpair of the $1$~-~Laplacian $\Delta^m_1$ then $\left(\lambda_0, \frac{1}{\nu(E_0(u))}\1_{E_0(u)}\right)$  is also an  $m$-eigenpair of $\Delta^m_1$, where $E_0(u)$ is the upper $0$-level set of $u$. Moreover $E_0(u)$ is an $m$-calibrable set and $\lambda_0=\lambda_{E_0(u)}^m$. Then, as a consequence of the previous result we obtain the following.

\begin{corollary}\label{proper003}  Let $\Omega\subset X$ be a $\nu$-measurable set such that $\left(\lambda_\Omega^m,\frac{1}{\nu(\Omega)}\1_\Omega\right)$ is an $m$-eigenpair, then
\item{ (i)} if $\nu(\Omega)<\frac12 \nu(X)$,
$$\left\{ \begin{array}{lll} \{0 \} = M(\1_\Omega, \lambda) \quad &\hbox{if} \ \ 0 < \lambda < \lambda_\Omega^m, \\ \\ \{ c \1_\Omega \, : \, 0 \leq c \leq 1 \}  \subset M(\1_\Omega, \lambda_\Omega^m) \quad & \
\\ \\ \{ \1_\Omega \}  = M(\1_\Omega, \lambda) \quad &\hbox{if} \ \  \lambda > \lambda_\Omega^m\, ;
 \end{array} \right.$$

\item{ (ii)} if $\nu(\Omega)=\frac12\nu(X)$,
$$\left\{ \begin{array}{lll} \{ c  \, : \, 0 \leq c \leq 1 \} = M(\1_\Omega, \lambda) \quad &\hbox{if} \ \ 0< \lambda < \lambda_\Omega^m, \\ \\ \{c\1_\Omega+d\1_{X\setminus\Omega}:0\le d\le c\le 1\}\subset M(\1_\Omega, \lambda_\Omega^m) \quad & \ \\ \\ \{ \1_\Omega \}  = M(\1_\Omega, \lambda) \quad &\hbox{if} \ \  \lambda > \lambda_\Omega^m\, .
 \end{array} \right.$$
 \end{corollary}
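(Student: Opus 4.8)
The plan is to derive everything from Proposition \ref{proper002} together with the two facts recalled just before the corollary: if $\left(\lambda_\Omega^m,\frac{1}{\nu(\Omega)}\1_\Omega\right)$ is an $m$-eigenpair, then so is the same pair with $u_0=\frac{1}{\nu(\Omega)}\1_\Omega$, and $0\in\mathrm{med}_\nu(u_0)$. First I would compute $\mathrm{med}_\nu(u_0)=\mathrm{med}_\nu\!\left(\frac{1}{\nu(\Omega)}\1_\Omega\right)$ explicitly in the two regimes. Since $u_0$ takes only the values $\frac{1}{\nu(\Omega)}>0$ (on $\Omega$) and $0$ (on $X\setminus\Omega$), a constant $\mu$ is a median iff $\nu(\{u_0<\mu\})\le\frac12\nu(X)$ and $\nu(\{u_0>\mu\})\le\frac12\nu(X)$. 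When $\nu(\Omega)<\frac12\nu(X)$ this forces $\mu=0$ (the only value making $\nu(\{u_0>\mu\})=\nu(\Omega)\le\frac12\nu(X)$ while also $\nu(\{u_0<\mu\})=0$), so $\mathrm{med}_\nu(u_0)=\{0\}$; when $\nu(\Omega)=\frac12\nu(X)$ one gets $\nu(\{u_0<\mu\})=\nu(\{u_0>\mu\})=\frac12\nu(X)$ precisely for $0\le\mu\le\frac1{\nu(\Omega)}$, i.e. $\mathrm{med}_\nu(u_0)=\{c\,:\,0\le c\le\frac1{\nu(\Omega)}\}$, which after rescaling to work with $\1_\Omega$ in place of $u_0$ becomes $\{c\,:\,0\le c\le1\}$.

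Next I would translate Proposition \ref{proper002}, which is stated for the eigenfunction $u_0$ normalized by $\|u_0\|_1=1$, into a statement about $\1_\Omega$. Note $M(f,\lambda)$ depends on $f$ only up to the obvious scaling: $u\in M(f,\lambda)\iff \frac{1}{a}u\in M(\frac1a f,\lambda)$ for $a>0$ (directly from the form of $\mathcal{E}_m$ and the homogeneity of $TV_m$ and the $L^1$-term), so with $a=\frac{1}{\nu(\Omega)}$ we have $M(\1_\Omega,\lambda)=\nu(\Omega)\,M(u_0,\lambda)$. Hence Proposition \ref{proper002} gives, for $0<\lambda<\lambda_0=\lambda_\Omega^m$, that $M(\1_\Omega,\lambda)=\nu(\Omega)\,\mathrm{med}_\nu(u_0)=\mathrm{med}_\nu(\1_\Omega)$; for $\lambda>\lambda_\Omega^m$, that $M(\1_\Omega,\lambda)=\{\1_\Omega\}$; and at $\lambda=\lambda_\Omega^m$, that $\{c\1_\Omega:0\le c\le1\}\cup\mathrm{med}_\nu(\1_\Omega)\subset M(\1_\Omega,\lambda_\Omega^m)$ (the set $\{cu_0:0\le c\le1\}$ rescales to $\{c\1_\Omega:0\le c\le1\}$).

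Finally I would plug in the two median computations. In case (i), $\mathrm{med}_\nu(\1_\Omega)=\{0\}$, giving $M(\1_\Omega,\lambda)=\{0\}$ for $\lambda<\lambda_\Omega^m$, $M(\1_\Omega,\lambda)=\{\1_\Omega\}$ for $\lambda>\lambda_\Omega^m$, and $\{c\1_\Omega:0\le c\le1\}\subset M(\1_\Omega,\lambda_\Omega^m)$ since $\mathrm{med}_\nu(\1_\Omega)=\{0\}$ is already contained in that set. In case (ii), $\mathrm{med}_\nu(\1_\Omega)=\{c:0\le c\le1\}$, which handles $\lambda<\lambda_\Omega^m$ directly; for the inclusion at $\lambda=\lambda_\Omega^m$ I would note that both $\1_\Omega$ and the constants $d\in[0,1]$ lie in $M(\1_\Omega,\lambda_\Omega^m)$, so by convexity of $M(\1_\Omega,\lambda_\Omega^m)$ (property \eqref{Conclo}) every convex combination $c\1_\Omega+(1-c)d'$... more carefully, one takes $c\1_\Omega$ and $d\1_{X\setminus\Omega}$; using Lemma \ref{sec001} and the symmetry $\nu(\Omega)=\nu(X\setminus\Omega)$ one also gets $\1_{X\setminus\Omega}\in M(\1_\Omega,\lambda_\Omega^m)$ up to medians, and then convexity of $M(\1_\Omega,\lambda_\Omega^m)$ yields all $c\1_\Omega+d\1_{X\setminus\Omega}$ with $0\le d\le c\le 1$ (these are convex combinations of $0$, $d(\1_\Omega+\1_{X\setminus\Omega})=d$, $(c-d)\1_\Omega$ scaled appropriately, all of which are known to be minimizers). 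The main obstacle I anticipate is precisely this last point — verifying the full two-parameter family $\{c\1_\Omega+d\1_{X\setminus\Omega}:0\le d\le c\le1\}$ lies in $M(\1_\Omega,\lambda_\Omega^m)$ in the balanced case; I would handle it by writing each such function as a convex combination of members already shown to be minimizers (the constants in $[0,1]$, the function $\1_\Omega$, and — via Lemma \ref{sec001} applied to $\1_{X\setminus\Omega}$ together with translation by the constant $1$ — the function $\1_X-\1_\Omega=\1_{X\setminus\Omega}$), and invoking \eqref{Conclo}; the monotone-truncation Maximum Principle (Proposition \ref{MP2}) plus Theorem \ref{El1} would be the fallback for checking the Euler--Lagrange condition directly if the convexity argument leaves gaps.
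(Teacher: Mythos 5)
Your argument is correct and follows the paper's own proof essentially verbatim: compute $\mathrm{med}_\nu(\1_\Omega)$ in the two regimes, invoke Proposition~\ref{proper002}, transfer its conclusions from $u_0=\frac{1}{\nu(\Omega)}\1_\Omega$ to $\1_\Omega$ by the rescaling $T(r)=\nu(\Omega)r$ (the paper routes this through Corollary~\ref{continv}, you through the homogeneity of $\mathcal{E}_m$ --- the same thing), and obtain the two-parameter family in the balanced case as convex combinations of the constants in $[0,1]$ and the multiples $c\1_\Omega$, via the convexity \eqref{Conclo}. One stray aside should be dropped: $\1_{X\setminus\Omega}$ is \emph{not} in $M(\1_\Omega,\lambda_\Omega^m)$ (its energy is $P_m(\Omega)+\lambda_\Omega^m\nu(X)>P_m(\Omega)=\mathcal{E}_m(\1_\Omega,\1_\Omega,\lambda_\Omega^m)$), but your closing parenthetical shows you never actually need it, since for $0\le d\le c\le 1$ one has $c\1_\Omega+d\1_{X\setminus\Omega}=t\,(c_1\1_\Omega)+(1-t)\,c_2$ with any $t\in[c-d,\,1-d]$, $c_1=(c-d)/t$, $c_2=d/(1-t)$, all in $[0,1]$.
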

\begin{proof} Note that, since $\left(\lambda_\Omega^m,\frac{1}{\nu(\Omega)}\1_\Omega\right)$ is an $m$-eigenpair, $\nu(\Omega)\le \frac12 \nu(X)$.
Now, if $\nu(\Omega)<\frac12 \nu(X)$, then $\hbox{med}_\nu(\1_\Omega)=\{0\}$, and, if
$\nu(\Omega)=\frac12 \nu(X)$, then $\hbox{med}_\nu(\1_\Omega)=\{c\,:\,0\le c\le 1\}$.  Consequently, the result  follows by  Proposition~\ref{proper002} and   Corollary~\ref{continv} with $T(r)=\nu(\Omega)r$.
In the case that $\nu(\Omega)=\frac12 \nu(X)$ we have $$\{ c \1_\Omega \, : \, 0 \leq c \leq 1 \}\cup \{ c  \, : \, 0 \leq c \leq 1 \}\subset M(\1_\Omega, \lambda_\Omega^m),$$ hence,  since  $M(\1_\Omega, \lambda_\Omega^m)$ is convex, we get that
$$\{c\1_\Omega+d\1_{X\setminus\Omega}:0\le d\le c\le 1\}\subset M(\1_\Omega, \lambda_\Omega^m).$$
\end{proof}

\begin{proposition}\label{camp004}
 Let $\Omega$ be a $\nu$-measurable set with $0<\nu(\Omega)<\nu(X)$. If $\1_\Omega\in M(\1_\Omega,\lambda_{\Omega}^m)$ then $\left(\lambda_\Omega^m,\frac{1}{\nu(\Omega)}\1_\Omega\right)$ is an $m$-eigenpair.
\end{proposition}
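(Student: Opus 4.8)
The plan is to combine the Euler--Lagrange characterization of $M(\1_\Omega,\lambda_\Omega^m)$ (Theorem~\ref{El1}) with the description of $\partial\mathcal{F}_m$ by antisymmetric vector fields (Theorem~\ref{chsubd}), and to extract, from an integration over $\Omega$, the rigidity that turns the $\mathrm{sign}(0)$--multiplier produced by Theorem~\ref{El1} into a genuine $\mathrm{sign}(\1_\Omega)$--multiplier, which is exactly what the definition of $m$-eigenpair requires.

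First I would record the easy reductions: $u_0:=\frac{1}{\nu(\Omega)}\1_\Omega\in L^2(X,\nu)$ has $\Vert u_0\Vert_1=1$, and, since $\mathcal{F}_m$ is positively $1$-homogeneous, $\partial\mathcal{F}_m(u_0)=\partial\mathcal{F}_m(\1_\Omega)$ (this is the same observation used for $\Delta_1^m(u_0)=\Delta_1^m(cu_0)$ in the proof of Proposition~\ref{proper002}). Hence it suffices to find $\xi\in{\rm sign}(\1_\Omega)$ with $\lambda_\Omega^m\xi\in\partial\mathcal{F}_m(\1_\Omega)=-\Delta_1^m(u_0)$. Now, since $\1_\Omega\in L^2(X,\nu)$ and, by hypothesis, $\1_\Omega\in M(\1_\Omega,\lambda_\Omega^m)$, Theorem~\ref{El1} yields a measurable $\xi_0:X\to[-1,1]$ (an element of ${\rm sign}(0)$) with $\lambda_\Omega^m\xi_0\in\Delta_1^m(\1_\Omega)$, i.e. $-\lambda_\Omega^m\xi_0\in\partial\mathcal{F}_m(\1_\Omega)$; by Theorem~\ref{chsubd}(v) there is then an antisymmetric $\g_0\in L^\infty(X\times X,\nu\otimes m_x)$ with $\Vert\g_0\Vert_\infty\le1$, $\g_0(x,y)\in{\rm sign}(\1_\Omega(y)-\1_\Omega(x))$ $(\nu\otimes m_x)$-a.e., and $\int_X\g_0(x,y)\,dm_x(y)=\lambda_\Omega^m\xi_0(x)$ for $\nu$-a.e. $x\in X$.

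The crucial step is to evaluate this last identity on $\Omega$ and integrate. For $x\in\Omega$ and $y\in X\setminus\Omega$ the sign constraint forces $\g_0(x,y)=-1$, so splitting the inner integral gives $\lambda_\Omega^m\xi_0(x)=\int_\Omega\g_0(x,y)\,dm_x(y)-m_x(X\setminus\Omega)$ for $\nu$-a.e. $x\in\Omega$. Integrating in $x$ over $\Omega$: the double integral $\int_\Omega\int_\Omega\g_0(x,y)\,dm_x(y)\,d\nu(x)$ vanishes by antisymmetry together with reversibility (the same swap-of-variables argument that yields $L_m(A,B)=L_m(B,A)$), and $\int_\Omega m_x(X\setminus\Omega)\,d\nu(x)=P_m(\Omega)=\lambda_\Omega^m\,\nu(\Omega)$; since ergodicity forces $P_m(\Omega)>0$, hence $\lambda_\Omega^m>0$, we obtain $\int_\Omega\xi_0\,d\nu=-\nu(\Omega)$. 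Because $\xi_0\ge-1$ $\nu$-a.e., this equality is only possible if $\xi_0=-1$ $\nu$-a.e. on $\Omega$.

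Finally, set $\xi:=-\xi_0$: then $\xi=1$ $\nu$-a.e. on $\Omega$ and $\xi\in[-1,1]$ on $X\setminus\Omega$, so $\xi\in{\rm sign}(\1_\Omega)={\rm sign}(u_0)$, while $\lambda_\Omega^m\xi=-\lambda_\Omega^m\xi_0\in\partial\mathcal{F}_m(\1_\Omega)=\partial\mathcal{F}_m(u_0)=-\Delta_1^m(u_0)$. Together with $\Vert u_0\Vert_1=1$ this is precisely the statement that $(\lambda_\Omega^m,u_0)$ is an $m$-eigenpair, by Definition~\ref{Ndefeigenpair}. The only delicate point is the sign bookkeeping between $\Delta_1^m$, $\partial\mathcal{F}_m$ and the multivalued ${\rm sign}$; the substantive content is the rigidity observation that the a priori weak constraint $\xi_0\in{\rm sign}(0)$ coming from Theorem~\ref{El1} is in fact saturated on $\Omega$, pinning $\xi_0\equiv-1$ there, which after the sign flip is exactly what promotes it to an element of ${\rm sign}(\1_\Omega)$.
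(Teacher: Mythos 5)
Your proof is correct, but it follows a genuinely different route from the paper's. The paper first shows that $\Omega$ is $m$-calibrable by testing the minimality of $\1_\Omega$ against $\1_E$ for subsets $E\subset\Omega$, then invokes an external result (\cite[Theorem 5.8]{MST1}) to produce an antisymmetric field $\g_0$ on $\Omega\times\Omega$ realizing the calibrability identity $\lambda_\Omega^m=-\int_\Omega\g_0(x,y)\,dm_x(y)+1-m_x(\Omega)$, and finally glues this field with the Euler--Lagrange field $g_1$ coming from $\1_\Omega\in M(\1_\Omega,\lambda_\Omega^m)$ to assemble the eigenpair. You instead work with the single Euler--Lagrange field throughout: the sign constraint pins $\g_0\equiv-1$ on $\Omega\times(X\setminus\Omega)$, the $\Omega\times\Omega$ contribution dies by antisymmetry plus reversibility, and integrating the identity $\int_X\g_0(x,y)\,dm_x(y)=\lambda_\Omega^m\xi_0(x)$ over $\Omega$ yields $\int_\Omega\xi_0\,d\nu=-\nu(\Omega)$, whence $\xi_0\equiv-1$ on $\Omega$ and $-\xi_0\in{\rm sign}(\1_\Omega)$; the homogeneity of $\partial\mathcal{F}_m$ (the same fact used in Proposition~\ref{proper002}) and ergodicity (to get $P_m(\Omega)>0$) close the argument. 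Your version is shorter and more self-contained, needing only Theorems~\ref{El1} and~\ref{chsubd} and the reversibility of $\nu$, at the cost of not exhibiting the calibrability of $\Omega$ as an explicit intermediate step (it follows a posteriori from the eigenpair property); the paper's gluing construction, by contrast, makes the calibrable structure of $\Omega$ visible and produces a vector field adapted to it. Both arguments are sound, and the sign bookkeeping you flag as the delicate point checks out.
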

\begin{proof}
 Let us first see that $\Omega$ is  $m$-calibrable. Indeed, for a $\nu$-measurable subset $E$ of $\Omega$ with $0<\nu(E)<\nu(\Omega)$, we have that
$$\begin{array}{l}P_m(\Omega)=\mathcal{E}_m(\1_\Omega, \1_\Omega, \lambda_\Omega^m)\le \mathcal{E}_m(\1_E, \1_\Omega, \lambda_\Omega^m)\\ \\ =P_m(E)+\lambda_\Omega^m\big(\nu(\Omega)-\nu(E)\big)=
P_m(E)+P_m(\Omega)-\lambda_\Omega^m\nu(E),
\end{array}$$
from where the $m$-calibrability of $\Omega$ follows.

Since $\Omega$ is $m$-calibrable, there exists (see \cite[Theorem 5.8]{MST1}) an antisymmetric function $\g_0$ in $\Omega\times\Omega$ such that
    \begin{equation}
  -1\le \g_0(x,y)\le 1 \qquad \hbox{for $(\nu \otimes m_x)$-\mbox{a.e. }$(x,y) \in \Omega \times \Omega$},\end{equation}    and
\begin{equation}\label{calibrablecondinomega}\lambda_\Omega^m = -\int_{\Omega }\g_0(x,y)\,dm_x(y) + 1 - m_x(\Omega), \quad x \in \Omega.
  \end{equation}
Now, if $\1_\Omega\in M(\1_\Omega,\lambda_{\Omega}^m)$, there exists $\xi_1 \in  \hbox{sign}(0)$ such that   $$\lambda_{\Omega}^m \xi_1 \in \Delta_1^m (\1_\Omega).$$
Therefore, there exists $g_1\in L^\infty(X\times X, \nu \otimes m_x)$ antisymmetric with $\Vert g_1 \Vert_\infty \leq 1$ such that
\begin{equation}\label{minimizerconditionomega}
-\int_{X}\g_1(x,y)\,dm_x(y)=  -\lambda_{\Omega}^m \xi_1(x) \quad \hbox{for }\nu-\mbox{a.e }x\in X,
\end{equation}
and
         \begin{equation}g_1(x,y) \in {\rm sign}(\1_\Omega(y) - \1_\Omega(x)) \quad \hbox{for }(\nu \otimes m_x)-a.e. \ (x,y) \in X \times X.
     \end{equation}
Let
$$g(x,y):=\left\{\begin{array}{cc}
    g_0(x,y) & \hbox{if } (x,y)\in\Omega\times\Omega \\[8pt]
    g_1(x,y) & \hbox{elsewhere}
  \end{array}\right.$$
and
$$\xi(x):=\left\{\begin{array}{cc}
    1 & \hbox{if } x\in\Omega \\[8pt]
    -\xi_1(x) & \hbox{elsewhere.}
  \end{array}\right.$$
  Then, \eqref{calibrablecondinomega} and \eqref{minimizerconditionomega} read as follows
$$\lambda_\Omega^m\xi(x) = -\int_{X }\g(x,y)\,dm_x(y), \quad \hbox{for } \nu-\hbox{a.e. }x \in \Omega$$
and
$$\lambda_\Omega^m\xi(x) = -\int_{X }\g(x,y)\,dm_x(y), \quad \hbox{for } \nu-\mbox{a.e }x\in X\setminus\Omega,$$
thus $(\lambda_\Omega^m,\frac{1}{\nu(\Omega)}\1_\Omega)$ is an $m$-eigenpair.
\end{proof}

\begin{corollary}\label{camp005}
Let $\Omega\subset X$ be a $\nu$-measurable set with $0<\nu(\Omega)< \nu(X)$. The following   statements are equivalent:
\item{ (i)} $\1_\Omega\in M(\1_\Omega,\lambda_\Omega^m)$,
\item{ (ii)} $\left(\lambda_\Omega^m,\frac{1}{\nu(\Omega)}\1_\Omega\right)$ is an $m$-eigenpair,
\item { (iii)} the following thresholding property holds $$
\left\{\begin{array}{ll}
\displaystyle 0\in M(\1_\Omega,\lambda)\quad\forall\,0<\lambda\le \lambda_\Omega^m, \hbox{ and}\\ \\
\displaystyle \1_\Omega\in M(\1_\Omega,\lambda)\quad\forall\,\lambda\ge\lambda_\Omega^m,
\end{array}\right.$$
\end{corollary}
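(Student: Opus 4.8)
The plan is to establish the cycle of implications (i) $\Rightarrow$ (ii) $\Rightarrow$ (iii) $\Rightarrow$ (i), each step being a direct appeal to a result already proved, so that no new analytical work is required.

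The implication (i) $\Rightarrow$ (ii) is exactly the content of Proposition~\ref{camp004}: if $\1_\Omega\in M(\1_\Omega,\lambda_\Omega^m)$, then $\left(\lambda_\Omega^m,\frac{1}{\nu(\Omega)}\1_\Omega\right)$ is an $m$-eigenpair. For (iii) $\Rightarrow$ (i) the plan is even simpler: specializing the second line of (iii) to $\lambda=\lambda_\Omega^m$ gives $\1_\Omega\in M(\1_\Omega,\lambda_\Omega^m)$, which is precisely (i).

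The substantive step is (ii) $\Rightarrow$ (iii), and here I would invoke Corollary~\ref{proper003}, which takes exactly the eigenpair hypothesis in (ii) as its assumption. Since an $m$-eigenpair of this form forces $\nu(\Omega)\le\frac12\nu(X)$, there are two cases. If $\nu(\Omega)<\frac12\nu(X)$, Corollary~\ref{proper003}(i) gives $\{0\}=M(\1_\Omega,\lambda)$ for $0<\lambda<\lambda_\Omega^m$, the inclusion $\{c\1_\Omega:0\le c\le 1\}\subset M(\1_\Omega,\lambda_\Omega^m)$ (so both $0$ and $\1_\Omega$ lie in $M(\1_\Omega,\lambda_\Omega^m)$), and $\{\1_\Omega\}=M(\1_\Omega,\lambda)$ for $\lambda>\lambda_\Omega^m$; reading off the membership of $0$ for $0<\lambda\le\lambda_\Omega^m$ and of $\1_\Omega$ for $\lambda\ge\lambda_\Omega^m$ yields the two displayed conditions of (iii). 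If $\nu(\Omega)=\frac12\nu(X)$, then Corollary~\ref{proper003}(ii) gives $M(\1_\Omega,\lambda)=\{c:0\le c\le 1\}$ (which contains $0$) for $0<\lambda<\lambda_\Omega^m$, the inclusion $\{c\1_\Omega+d\1_{X\setminus\Omega}:0\le d\le c\le 1\}\subset M(\1_\Omega,\lambda_\Omega^m)$ (which contains $0$, taking $c=d=0$, and $\1_\Omega$, taking $c=1$, $d=0$), and $\{\1_\Omega\}=M(\1_\Omega,\lambda)$ for $\lambda>\lambda_\Omega^m$; again (iii) follows.

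Because all three arrows reduce to previously established statements, there is no genuine obstacle. The only point deserving a little care is the case $\nu(\Omega)=\frac12\nu(X)$: there the set of minimizers at sub-threshold parameters is the whole interval of constant functions rather than $\{0\}$, and the inclusion at $\lambda=\lambda_\Omega^m$ is only one-sided, so one must verify explicitly that $0$ and $\1_\Omega$ still belong to the relevant sets, which they do by the choices of constants indicated above.
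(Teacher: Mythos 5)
Your proposal is correct and follows exactly the same route as the paper: (i) $\Rightarrow$ (ii) by Proposition~\ref{camp004}, (ii) $\Rightarrow$ (iii) by Corollary~\ref{proper003}, and (iii) $\Rightarrow$ (i) by specializing to $\lambda=\lambda_\Omega^m$. The paper states these three arrows without elaboration, so your case analysis of Corollary~\ref{proper003} (including the check at $\nu(\Omega)=\frac12\nu(X)$) simply fills in details the authors left implicit.
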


\begin{proof}
$(i) \rightarrow (ii)$ by Proposition~\ref{camp004}. $(ii) \rightarrow (iii)$ by Corollary~\ref{proper003}. $(iii) \rightarrow (i)$ follows easily.
 \end{proof}

 The following result is proved in \cite[Proposition 3.1]{MST1}.
\begin{proposition}[\cite{MST1}]\label{NForm1prop} Let $1\le p\le \infty$. For $u \in BV_m(X) \cap L^{p'}(X,\nu)$, we have that
\begin{equation}\label{NForm1}
TV_m(f) =   \sup \left\{ \int_{X} f(x) ({\rm div}_m \z)(x) d\nu(x)  \ : \ \z \in X_m^p(X), \ \Vert \z \Vert_\infty \leq 1 \right\}.
\end{equation}
\end{proposition}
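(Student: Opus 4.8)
The plan is to deduce the identity \eqref{NForm1} from Green's formula \eqref{Green}; throughout I write $u$ for the function appearing in the hypothesis (it is denoted $f$ in the statement of \eqref{NForm1}). First I would prove the inequality \lq\lq$\le$\rq\rq. Fix $\z\in X_m^p(X)$ with $\Vert\z\Vert_\infty\le 1$. Since $u\in BV_m(X)\cap L^{p'}(X,\nu)$, Green's formula applies and gives
$$\int_X u(x)({\rm div}_m\z)(x)\,d\nu(x)=-\frac12\int_{X\times X}\nabla u(x,y)\,\z(x,y)\,d(\nu\otimes m_x),$$
and, using $\Vert\z\Vert_\infty\le1$ together with $|\nabla u(x,y)|=|u(y)-u(x)|\in L^1(X\times X,\nu\otimes m_x)$ (which holds because $u\in BV_m(X)$), the right-hand side is bounded in absolute value by $\frac12\int_{X\times X}|u(y)-u(x)|\,d(\nu\otimes m_x)=TV_m(u)$. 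Taking the supremum over all admissible $\z$ proves \lq\lq$\le$\rq\rq.

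For the reverse inequality I would exhibit a vector field attaining the value $TV_m(u)$. Set $\z_0(x,y):={\rm sign}(u(x)-u(y))$, with the convention that $\z_0$ takes the value $0$ where $u(x)=u(y)$ (a legitimate choice since $0\in{\rm sign}(0)$). Then $\z_0$ is antisymmetric, $\nu\otimes m_x$-measurable, and $\Vert\z_0\Vert_\infty\le1$; by antisymmetry $({\rm div}_m\z_0)(x)=\int_X\z_0(x,y)\,dm_x(y)$, so $\Vert{\rm div}_m\z_0\Vert_\infty\le m_x(X)=1$, and since $\nu$ is finite this forces ${\rm div}_m\z_0\in L^p(X,\nu)$; hence $\z_0\in X_m^p(X)$ is admissible. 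Finally, since $\nabla u(x,y)\,\z_0(x,y)=(u(y)-u(x))\,{\rm sign}(u(x)-u(y))=-|u(y)-u(x)|$ for $(\nu\otimes m_x)$-a.e.\ $(x,y)$, Green's formula yields $\int_X u(x)({\rm div}_m\z_0)(x)\,d\nu(x)=\frac12\int_{X\times X}|u(y)-u(x)|\,d(\nu\otimes m_x)=TV_m(u)$, which combined with the first step gives \eqref{NForm1}.

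The whole argument is essentially a one-line duality once Green's formula is in hand; the only point requiring real care is the verification that the pointwise sign selection $\z_0$ is jointly measurable with respect to the generalized product measure $\nu\otimes m_x$, so that ${\rm div}_m\z_0$ is well defined $\nu$-a.e. I expect this measurability bookkeeping — not any analytic difficulty — to be the main (and only) obstacle. It is worth noting that the finiteness of $\nu$ is exactly what makes the single field $\z_0$ admissible for every $p\in[1,\infty]$ simultaneously, so no smoothing or approximation step is needed, in contrast with the classical local duality formula for $\int|Du|$.
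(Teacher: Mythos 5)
Your argument is correct and is essentially the intended one: the paper does not prove this proposition but imports it from \cite[Proposition 3.1]{MST1}, where the same two-step duality is used --- Green's formula \eqref{Green} plus $\Vert \z\Vert_\infty\le 1$ for the upper bound, and the antisymmetric selection $\z_0(x,y)={\rm sign}(u(x)-u(y))$ (with the $0$ convention on the diagonal of values) to attain $TV_m(u)$. Your bookkeeping is also right: $\z_0$ is Borel as the composition of the measurable map $(x,y)\mapsto u(x)-u(y)$ with a Borel selection of ${\rm sign}$, $\vert{\rm div}_m\z_0\vert\le 1$ since each $m_x$ is a probability measure, and the standing finiteness of $\nu$ puts ${\rm div}_m\z_0$ in every $L^p(X,\nu)$, so no approximation is needed.
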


We say that a function $f \in BV_m(X)\cap L^{p'}(X,\nu)$ is {\it maximal} if the supremum  in \eqref{NForm1} is a maximum, that is, if there exists $\z_0=\z_0(f) \in X_m^p(X)$ with  $\Vert \z_0 \Vert_\infty \leq 1$ such that
\begin{equation}\label{tres001}TV_m(f) = \int_{X} f(x) ({\rm div}_m \z_0)(x) d\nu(x).
\end{equation}

\begin{proposition}\label{casi1} If $f \in BV_m(X)$ is a maximal function with $\z_0=\z_0(f)$ satisfying~\eqref{tres001}, then, for $\lambda_*=\Vert div_mz_0\Vert_\infty$, $$f\in M(f,\lambda_*),$$
and, consequently,  $M(f,\lambda) = \{ f \}$ for all $\lambda >\lambda_*$.
\end{proposition}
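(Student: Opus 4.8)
The plan is to show directly that $f$ itself minimizes the $L^1$-fidelity energy $\mathcal{E}_m(\cdot,f,\lambda_*)$ over $L^1(X,\nu)$, by pairing the vector field $\z_0$ against the dual formula for the $m$-total variation of Proposition~\ref{NForm1prop}. First I would record that $\lambda_*<\infty$: taking $p=\infty$ in the definition of maximality we have $\z_0\in X^\infty_m(X)$, i.e.\ ${\rm div}_m\z_0\in L^\infty(X,\nu)$, so $\lambda_*=\|{\rm div}_m\z_0\|_\infty<\infty$ (and in fact $\lambda_*\le1$, since $\|\z_0\|_\infty\le1$ and $\nu\otimes m_x$ is symmetric by reversibility of $\nu$).

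Now let $u\in L^1(X,\nu)$. If $u\notin BV_m(X)$ then $\mathcal{E}_m(u,f,\lambda_*)=+\infty\ge\mathcal{E}_m(f,f,\lambda_*)$ trivially, so assume $u\in BV_m(X)\cap L^1(X,\nu)$. By Proposition~\ref{NForm1prop} with $p=\infty$, applied to the admissible field $\z_0$,
$$TV_m(u)\ \ge\ \int_X u\,({\rm div}_m\z_0)\,d\nu\ =\ \int_X f\,({\rm div}_m\z_0)\,d\nu\ +\ \int_X(u-f)\,({\rm div}_m\z_0)\,d\nu .$$
The first integral on the right equals $TV_m(f)$ by the defining identity~\eqref{tres001} of a maximal function, and the second is $\ \ge\ -\int_X|u-f|\,|{\rm div}_m\z_0|\,d\nu\ \ge\ -\lambda_*\int_X|u-f|\,d\nu$ by the definition of $\lambda_*$. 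Adding $\lambda_*\int_X|u-f|\,d\nu$ to both sides gives
$$\mathcal{E}_m(u,f,\lambda_*)=TV_m(u)+\lambda_*\int_X|u-f|\,d\nu\ \ge\ TV_m(f)=\mathcal{E}_m(f,f,\lambda_*),$$
so $f\in M(f,\lambda_*)$. (Equivalently, when $\lambda_*>0$ one may instead verify the Euler--Lagrange condition of Theorem~\ref{El1}: $\xi:=-\frac1{\lambda_*}{\rm div}_m\z_0$ satisfies $\xi(x)\in[-1,1]={\rm sign}(0)$ $\nu$-a.e., and taking $\z=-\z_0$ in Theorem~\ref{chsubd}(ii) gives $\lambda_*\xi\in\Delta^m_1 f$.)

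Finally, the ``consequently'' part is immediate from Lemma~\ref{proper001}(i): since $f\in M(f,\lambda_*)$, we get $M(f,\lambda)=\{f\}$ for every $\lambda>\lambda_*$. I do not anticipate any real obstacle here; the only points requiring a little care are the finiteness of $\lambda_*$ and the absolute convergence of $\int_X(u-f)\,({\rm div}_m\z_0)\,d\nu$, which follow from ${\rm div}_m\z_0\in L^\infty(X,\nu)$ together with $u-f\in L^1(X,\nu)$ and $f\in L^1(X,\nu)$. The real content of the proof is the one-line duality estimate above.
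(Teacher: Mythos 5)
Your argument is exactly the paper's proof: both pair the maximizing field $\z_0$ against the dual formula of Proposition~\ref{NForm1prop}, split $\int_X u\,({\rm div}_m\z_0)\,d\nu$ as $\int_X f\,({\rm div}_m\z_0)\,d\nu+\int_X(u-f)\,({\rm div}_m\z_0)\,d\nu$, bound the second term by $\lambda_*\int_X|u-f|\,d\nu$, and then invoke Lemma~\ref{proper001} for the final claim. The extra remarks on finiteness and integrability are fine (and the case $u\notin BV_m(X)$ is vacuous since the paper notes $L^1(X,\nu)\subset BV_m(X)$), so this is correct and essentially identical to the paper's proof.
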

\begin{proof} Given $u \in L^1(X,\nu)$, by Proposition \ref{NForm1prop}, we have that
$$\mathcal{E}_m(u, f, \lambda_*)= TV_m(u) + \lambda_* \int_X \vert u - f \vert d\nu \geq \int_{X} u(x) ({\rm div}_m \z_0)(x) d\nu(x)+ \lambda_* \int_X \vert u - f \vert d\nu$$ $$= \int_{X} f(x) ({\rm div}_m \z_0)(x) d\nu(x)+ \lambda_* \int_X \vert u - f \vert d\nu+ \int_{X} (u(x) -f(x)) ({\rm div}_m \z_0)(x) d\nu(x)$$ $$\geq \mathcal{E}_m(f, f, \lambda_*)+ \left( \lambda_* -\Vert {\rm div}_m \z_0 \Vert_\infty \right)\int_X \vert u - f \vert d\nu=\mathcal{E}_m(f, f, \lambda_*).$$
Therefore, $f\in M(f,\lambda_*)$. The rest of the thesis follows from Lemma~\ref{proper001}. \end{proof}

\begin{proposition}\label{pers002}   For any $\nu$-measurable set $\Omega\subset X$, $\1_\Omega$ is a maximal function  with $\z_0=\z_0(\1_\Omega)$ given by
 $$z_0(x,y)=\left\{\begin{array}{ll}
                  0 & \hbox{if }(x,y)\in\Omega\times\Omega, \\[8pt]
                  -1 & \hbox{if }(x,y)\in (X\setminus\Omega)\times\Omega, \\[8pt]
                  1 & \hbox{if }(x,y)\in \Omega\times(X\setminus\Omega), \\[8pt]
                  0 & \hbox{if }(x,y)\in (X\setminus\Omega)\times(X\setminus\Omega) .
                \end{array}\right.$$
Hence   $$\1_\Omega\in M(\1_\Omega,\lambda_*),$$
 where
$$\lambda_*=\lambda_*(\Omega):=\Vert \1_\Omega -m_{(.)}(\Omega) \Vert_\infty$$
satisfies
$0<\lambda_*\le 1.$
\end{proposition}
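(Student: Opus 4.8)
The plan is to verify directly that the proposed field $\z_0$ belongs to $X_m^p(X)$ with $\Vert\z_0\Vert_\infty\le 1$, to compute its $m$-divergence explicitly, to recognize $\1_\Omega$ as a maximal function with this $\z_0$, and finally to invoke Proposition~\ref{casi1}.

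\textbf{Step 1 (the $m$-divergence of $\z_0$).} First I would note that $\z_0$ is antisymmetric: swapping $x$ and $y$ interchanges the cases $(x,y)\in\Omega\times(X\setminus\Omega)$ and $(x,y)\in(X\setminus\Omega)\times\Omega$, flipping the sign of the value, and fixes the two diagonal cases. Hence $({\rm div}_m\z_0)(x)=\int_X z_0(x,y)\,dm_x(y)$, and a two-case computation gives $({\rm div}_m\z_0)(x)=m_x(X\setminus\Omega)=1-m_x(\Omega)$ for $x\in\Omega$ and $({\rm div}_m\z_0)(x)=-m_x(\Omega)$ for $x\in X\setminus\Omega$; that is,
$${\rm div}_m\z_0=\1_\Omega-m_{(\cdot)}(\Omega).$$
Since this function takes values in $[-1,1]$ and $\nu$ is finite, it lies in $L^p(X,\nu)$ for every $1\le p\le\infty$, and together with $\Vert\z_0\Vert_\infty\le1$ this shows $\z_0\in X_m^p(X)$ for all $p\ge1$.

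\textbf{Step 2 (maximality of $\1_\Omega$).} Using the explicit form of ${\rm div}_m\z_0$,
$$\int_X\1_\Omega(x)\,({\rm div}_m\z_0)(x)\,d\nu(x)=\int_\Omega\big(1-m_x(\Omega)\big)\,d\nu(x)=\nu(\Omega)-\int_\Omega\int_\Omega dm_x(y)\,d\nu(x),$$
which equals $P_m(\Omega)=TV_m(\1_\Omega)$ by~\eqref{secondf021}. (Equivalently, one checks pointwise that $z_0(x,y)\in{\rm sign}(\1_\Omega(x)-\1_\Omega(y))$, so that $-\nabla\1_\Omega(x,y)\,z_0(x,y)=\vert\nabla\1_\Omega(x,y)\vert$ for $(\nu\otimes m_x)$-a.e.\ $(x,y)$, and the identity then follows from Green's formula~\eqref{Green}.) By Proposition~\ref{NForm1prop} this is exactly the statement that $\1_\Omega$ is maximal with $\z_0=\z_0(\1_\Omega)$. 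Since $\1_\Omega\in L^1(X,\nu)\subset BV_m(X)$, Proposition~\ref{casi1} applies with $f=\1_\Omega$ and $\lambda_*=\Vert{\rm div}_m\z_0\Vert_\infty=\Vert\1_\Omega-m_{(\cdot)}(\Omega)\Vert_\infty$, yielding $\1_\Omega\in M(\1_\Omega,\lambda_*)$.

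\textbf{Step 3 (the bounds $0<\lambda_*\le1$).} Since $\1_\Omega-m_{(\cdot)}(\Omega)$ equals $1-m_x(\Omega)\in[0,1]$ on $\Omega$ and $-m_x(\Omega)\in[-1,0]$ on $X\setminus\Omega$, we get $\lambda_*\le1$. For the lower bound one may assume $0<\nu(\Omega)<\nu(X)$ (otherwise $\1_\Omega$ is $\nu$-a.e.\ constant and there is nothing to prove): integrating $\1_\Omega-m_{(\cdot)}(\Omega)$ over $\Omega$ and using~\eqref{secondf021} gives $P_m(\Omega)=\int_\Omega\big(\1_\Omega(x)-m_x(\Omega)\big)\,d\nu(x)\le\nu(\Omega)\,\lambda_*$, while $P_m(\Omega)=TV_m(\1_\Omega)>0$ by ergodicity (as $TV_m(\1_\Omega)=0$ would force $\1_\Omega$ to be $\nu$-a.e.\ constant); hence $\lambda_*\ge\lambda_\Omega^m>0$. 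I do not expect any genuine obstacle here; the only points that need care are the antisymmetry bookkeeping in Step~1 and correctly matching the resulting integral with formula~\eqref{secondf021} in Step~2.
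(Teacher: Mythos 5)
Your proof is correct and follows essentially the same route as the paper's: compute ${\rm div}_m\z_0=\1_\Omega-m_{(\cdot)}(\Omega)$, identify $\int_X\1_\Omega\,({\rm div}_m\z_0)\,d\nu$ with $P_m(\Omega)=TV_m(\1_\Omega)$ via \eqref{secondf021}, and invoke Proposition~\ref{casi1}. The only additions are your explicit checks of antisymmetry, of $\z_0\in X_m^p(X)$, and of the bounds $0<\lambda_*\le 1$ (correctly noting that the strict lower bound needs $0<\nu(\Omega)<\nu(X)$), which the paper leaves implicit.
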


 \begin{proof}
Indeed, for $x\in\Omega$,
$$({\rm div}_m \z_0)(x)=\frac{1}{2}\left(\int_X z_0(x,y)dm_x(y)-\int_X z_0(y,x)dm_x(y)\right)=$$
$$=\frac{1}{2}\int_{X\setminus \Omega}1 dm_x(y)-\frac12\int_{X\setminus\Omega}-1dm_x(y)=m_x(X\setminus\Omega) . $$
Therefore,
$$\int_{X} \1_\Omega(x) ({\rm div}_m \z_0)(x) d\nu(x)=\int_\Omega\int_{X\setminus\Omega}dm_x(y)d\nu(x)=P_m(\Omega) . $$
Observe also that, for $x\in X\setminus\Omega$,
$$({\rm div}_m \z_0)(x)=-m_x(\Omega) . $$
Therefore,
$${\rm div}_m \z_0 (x)= \1_\Omega(x) -m_{x}(\Omega),$$
and, consequently,
$$\lambda_*=  \Vert {\rm div}_m \z_0 \Vert_\infty=\Vert \1_\Omega -m_{(.)}(\Omega) \Vert_\infty.$$
\end{proof}

\begin{remark} \label{pers001}{\rm
  (i)
    We have that $$\lambda_\Omega^m\le \lambda_*(\Omega).$$ Otherwise, if $\lambda_*(\Omega)< \lambda_\Omega^m$, since $\1_\Omega\in M(\1_\Omega,\lambda_*)$, by Lemma~\ref{proper001} (i) we would have that $\1_\Omega\in M(\1_\Omega,\lambda_\Omega^m)$. Hence, by Proposition~\ref{camp004}, $\left(\lambda_\Omega^m,\frac{1}{\nu(\Omega)}\1_\Omega\right)$ is an $m$-eigenpair and then, by Proposition~\ref{proper003}, $\1_\Omega\notin M(\1_\Omega,\lambda_*)$ which is a contradiction.

Note that, by Proposition~\ref{camp004},
\begin{center}
       if  $\lambda_\Omega^m = \lambda_*(\Omega)$ then $\left(\lambda_\Omega^m,\frac{1}{\nu(\Omega)}\1_\Omega\right)$ is an $m$-eigenpair.
\end{center}

  We point out that in \cite[Theorem 6.5]{MST1}, assuming that $\Omega$ is $m$-calibrable, we proved that $\left(\lambda_\Omega^m,\frac{1}{\nu(\Omega)}\1_\Omega\right)$ is an $m$-eigenpair under the weaker assumption that $\lambda_\Omega^m\ge m_x(\Omega)$ for all $x \in X \setminus \Omega$.

\item{ (ii)} Furthermore, $$\lambda_*(X\setminus\Omega)=\lambda_*(\Omega),$$ and, consequently, from the previous point,
$$ \max\{\lambda_\Omega^m,\lambda_{X\setminus\Omega}^m\}\le \lambda_*(\Omega).$$

} \end{remark}

\begin{proposition}\label{camp006}
Let $\Omega\subset X$ be a $\nu$-measurable set. There exists $\lambda(\Omega)$ satisfying $$ \hbox{max}\{\lambda_\Omega^m,\lambda_{X\setminus\Omega}^m\} \le \lambda(\Omega)\le \lambda_*(\Omega)$$ and
$$\left\{ \begin{array}{ll}
\1_\Omega\not\in M(\1_\Omega,\lambda) & \hbox{if } \ 0<\lambda< \lambda(\Omega),\\[6pt]
         \1_\Omega\in M(\1_\Omega,\lambda(\Omega)), &  \\[6pt]
         \{\1_\Omega\}= M(\1_\Omega,\lambda) & \hbox{if } \ \lambda > \lambda(\Omega).
       \end{array}\right.
$$
Furthermore,
\begin{equation}\label{firstm001}\lambda(\Omega)=\lambda_\Omega^m\ \hbox{ if, and only if, } \left(\lambda_\Omega^m,\frac{1}{\nu(\Omega)}\1_\Omega\right)  \hbox{ is an $m$-eigenpair,}
\end{equation}
 and
\begin{equation}\label{firstm002}\lambda(\Omega)=\lambda_{X\setminus\Omega}^m \ \hbox{ if, and only if, } \left(\lambda_{X\setminus\Omega}^m,\frac{1}{\nu(X\setminus\Omega)}\1_{X\setminus\Omega}\right) \hbox{ is an $m$-eigenpair}.\end{equation}
\end{proposition}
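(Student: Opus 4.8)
The plan is to \emph{define} the threshold directly as
$$\lambda(\Omega):=\inf I,\qquad I:=\{\lambda>0\,:\,\1_\Omega\in M(\1_\Omega,\lambda)\},$$
and then to check that this infimum is attained and satisfies all the requirements. Throughout I would assume $0<\nu(\Omega)<\nu(X)$, which is implicit in the statement since $\lambda_\Omega^m$, $\lambda_{X\setminus\Omega}^m$ only make sense then. The first step is to locate $I$. By Proposition~\ref{pers002} we have $\1_\Omega\in M(\1_\Omega,\lambda_*(\Omega))$, so $\lambda_*(\Omega)\in I$ and hence $\lambda(\Omega)\le\lambda_*(\Omega)$. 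For the lower bound, the key observation is that the competitor $\1_\Omega$ has energy independent of $\lambda$: $\mathcal{E}_m(\1_\Omega,\1_\Omega,\lambda)=TV_m(\1_\Omega)=P_m(\Omega)$. Comparing it with $u=0$ gives, for every $\lambda\in I$, $P_m(\Omega)\le\mathcal{E}_m(0,\1_\Omega,\lambda)=\lambda\,\nu(\Omega)$, hence $\lambda\ge\lambda_\Omega^m$; and, via Lemma~\ref{sec001} applied to $X\setminus\Omega$ (equivalently, comparing with $u=\1_X$ and using $P_m(\Omega)=P_m(X\setminus\Omega)$), also $\lambda\ge\lambda_{X\setminus\Omega}^m$. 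Thus $\max\{\lambda_\Omega^m,\lambda_{X\setminus\Omega}^m\}\le\lambda(\Omega)\le\lambda_*(\Omega)$, and in particular $\lambda(\Omega)>0$ since ergodicity forces $P_m(\Omega)>0$.

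Next I would invoke Lemma~\ref{proper001}(i): whenever $\lambda_0\in I$ one has $M(\1_\Omega,\lambda)=\{\1_\Omega\}$ for every $\lambda>\lambda_0$, so $(\lambda_0,\infty)\subset I$; hence $I$ is an interval equal to $[\lambda(\Omega),\infty)$ or $(\lambda(\Omega),\infty)$. To rule out the open case, i.e.\ to show $\1_\Omega\in M(\1_\Omega,\lambda(\Omega))$, I would take $\lambda_n\downarrow\lambda(\Omega)$ with $\lambda_n\in I$ and pass to the limit in
$$P_m(\Omega)=\mathcal{E}_m(\1_\Omega,\1_\Omega,\lambda_n)\le \mathcal{E}_m(u,\1_\Omega,\lambda_n)=TV_m(u)+\lambda_n\Vert u-\1_\Omega\Vert_{L^1(X,\nu)}$$
(legitimate since $\nu$ is finite, so $\Vert u-\1_\Omega\Vert_{L^1(X,\nu)}<\infty$) to obtain $P_m(\Omega)\le\mathcal{E}_m(u,\1_\Omega,\lambda(\Omega))$ for all $u\in L^1(X,\nu)$; since $P_m(\Omega)=\mathcal{E}_m(\1_\Omega,\1_\Omega,\lambda(\Omega))$, this is exactly $\1_\Omega\in M(\1_\Omega,\lambda(\Omega))$. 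The three displayed alternatives now follow at once: $\1_\Omega\notin M(\1_\Omega,\lambda)$ for $0<\lambda<\lambda(\Omega)$ by minimality of $\lambda(\Omega)$, $\1_\Omega\in M(\1_\Omega,\lambda(\Omega))$ as just shown, and $\{\1_\Omega\}=M(\1_\Omega,\lambda)$ for $\lambda>\lambda(\Omega)$ again by Lemma~\ref{proper001}(i).

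For the two equivalences, I would first note that Lemma~\ref{sec001} gives $\1_\Omega\in M(\1_\Omega,\lambda)\iff\1_{X\setminus\Omega}\in M(\1_{X\setminus\Omega},\lambda)$, whence $\lambda(\Omega)=\lambda(X\setminus\Omega)$; so it suffices to prove~\eqref{firstm001}, and then~\eqref{firstm002} follows by applying it to $X\setminus\Omega$. To prove~\eqref{firstm001}: if $\lambda(\Omega)=\lambda_\Omega^m$, then $\lambda_\Omega^m\in I$, i.e.\ $\1_\Omega\in M(\1_\Omega,\lambda_\Omega^m)$, and Corollary~\ref{camp005} yields that $(\lambda_\Omega^m,\frac{1}{\nu(\Omega)}\1_\Omega)$ is an $m$-eigenpair; conversely, if this pair is an $m$-eigenpair, Corollary~\ref{camp005} gives $\1_\Omega\in M(\1_\Omega,\lambda_\Omega^m)$, so $\lambda(\Omega)\le\lambda_\Omega^m$, which together with the lower bound $\lambda(\Omega)\ge\lambda_\Omega^m$ established in the first paragraph forces equality.

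The only genuinely delicate step is the attainment $\1_\Omega\in M(\1_\Omega,\lambda(\Omega))$; everything else is bookkeeping on top of Proposition~\ref{pers002}, Lemma~\ref{proper001}, Lemma~\ref{sec001} and Corollary~\ref{camp005}. Attainment goes through smoothly precisely because the energy of the competitor $\1_\Omega$ does not depend on $\lambda$, so the limiting inequality is one-sided and no compactness of the minimizers $u$ is needed — which is fortunate, since (as stressed in Section~\ref{secgeoprobl}) such compactness is exactly what is lacking in this nonlocal setting.
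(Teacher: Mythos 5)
Your proposal is correct and follows essentially the same route as the paper: define $\lambda(\Omega)$ as the infimum of the set of $\lambda$ with $\1_\Omega\in M(\1_\Omega,\lambda)$, bound it above by $\lambda_*(\Omega)$ via Proposition~\ref{pers002}, bound it below by comparing $\1_\Omega$ against the competitors $0$ and $\1_X$, and settle the equivalences with Corollary~\ref{camp005} and Lemma~\ref{sec001}. The only cosmetic difference is that for the attainment of the infimum the paper invokes the Lipschitz continuity of $\lambda\mapsto\mathcal{E}_m(f,\lambda)$ (Proposition~\ref{sec002}) while you pass to the limit $\lambda_n\downarrow\lambda(\Omega)$ directly, which amounts to the same thing.
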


\begin{proof}
By Proposition~\ref{pers002},  $\lambda_*(\Omega) \in \{\lambda \, : \, \1_\Omega\in M(\1_\Omega,\lambda)\}\neq \emptyset$.  Set $$\lambda(\Omega):=\inf\{\lambda \, : \, \1_\Omega\in M(\1_\Omega,\lambda)\}.$$  Then,
 $$  \lambda(\Omega)\le \lambda_*(\Omega),$$
 and, by Proposition~\ref{sec002},
$$\lambda(\Omega)=\min\{\lambda \, : \, \1_\Omega\in M(\1_\Omega,\lambda)\}.$$
Hence, $$\1_\Omega\in M(\1_\Omega,\lambda(\Omega)),$$
 and, by Lemma \ref{proper001}, $\{\1_\Omega\}= M(\1_\Omega,\lambda)$
for every $\lambda>\lambda(\Omega)$.

For $\lambda<\lambda_\Omega^m$, we have
$$\mathcal{E}_m(0, \1_\Omega, \lambda)=\lambda \nu(\Omega)<P_m(\Omega)=\mathcal{E}_m(\1_\Omega, \1_\Omega, \lambda) $$
so $\1_\Omega\not\in M(\1_\Omega,\lambda)$.
Moreover, for $\lambda<\lambda_{X\setminus\Omega}^m$, we have
$$\mathcal{E}_m(\1_X, \1_\Omega, \lambda)=\lambda \nu(X\setminus\Omega)<P_m(X\setminus\Omega)=P_m(\Omega)=\mathcal{E}_m(\1_\Omega, \1_\Omega, \lambda) $$
so $\1_\Omega\not\in M(\1_\Omega,\lambda)$.
Consequently, we have   that
$$ \hbox{max}\{\lambda_\Omega^m,\lambda_{X\setminus\Omega}^m\} \le \lambda(\Omega)\le \lambda_*(\Omega) \, .$$

  Finally, \eqref{firstm001} follows from Corollary~\ref{camp005}, and \eqref{firstm002} follows from  Corollary~\ref{camp005} and Lemma~\ref{sec001}.
\end{proof}

\begin{proposition}\label{laseg01}
  Let $\Omega\subset X$ be a $\nu$-measurable set.
\item{(i) } If there exists $\lambda>0$ such that $0 \in M(\1_\Omega, \lambda)$, then there exists $\lambda^{0}(\Omega)$ satisfying $$ 0 < \lambda^0(\Omega)\le h_1^m(\Omega)$$
    and
    $$\left\{ \begin{array}{ll}
         \hbox{med}_\nu(\1_\Omega)=M(\1_\Omega,\lambda) & \hbox{if } \ 0<\lambda<\lambda^0(\Omega),\\[6pt]
         0\in M(\1_\Omega,\lambda^0(\Omega)), &  \\[6pt]
         0 \not\in M(\1_\Omega,\lambda) & \hbox{if } \ \lambda > \lambda^0(\Omega).
       \end{array}\right.
$$
\item{(ii) } If there exists $\lambda>0$ such that $1 \in M(\1_{\Omega}, \lambda)$, then there exists $\lambda^{1}(\Omega)$ satisfying $$ 0 < \lambda^1(\Omega)\le h_1^m(X\setminus\Omega)$$  and
    $$\left\{ \begin{array}{ll}
         \hbox{med}_\nu(\1_\Omega)=M(\1_\Omega,\lambda) & \hbox{if } \ 0<\lambda<\lambda^1(\Omega),\\[6pt]
         1\in M(\1_\Omega,\lambda^1(\Omega)), &  \\[6pt]
         1 \not\in M(\1_\Omega,\lambda) & \hbox{if } \ \lambda > \lambda^1(\Omega).
       \end{array}\right.$$
\end{proposition}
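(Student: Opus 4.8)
The plan is to prove part~(i) directly and then deduce part~(ii) from it by means of the symmetry $\Omega\leftrightarrow X\setminus\Omega$ provided by Lemma~\ref{sec001}. The starting point for~(i) is an a priori estimate: if $0\in M(\1_\Omega,\lambda)$, then, comparing the candidate $u=0$ (for which $\mathcal{E}_m(0,\1_\Omega,\lambda)=\lambda\nu(\Omega)$) against $u=\1_E$ for an arbitrary $\nu$-measurable set $E\subset\Omega$ with $\nu(E)>0$ (for which $\mathcal{E}_m(\1_E,\1_\Omega,\lambda)=P_m(E)+\lambda(\nu(\Omega)-\nu(E))$), minimality of $0$ forces $\lambda\nu(E)\le P_m(E)$, i.e. $\lambda\le\lambda^m_E$; taking the infimum over all such $E$ gives $\lambda\le h_1^m(\Omega)$. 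Hence the set $S:=\{\lambda>0:\,0\in M(\1_\Omega,\lambda)\}$ is contained in $(0,h_1^m(\Omega)]$, and, by the hypothesis of~(i), it is nonempty.

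Next, Lemma~\ref{proper001}(ii), applied with the constant $c=0$, shows that whenever $0\in M(\1_\Omega,\lambda_0)$ one has $0\in{\rm med}_\nu(\1_\Omega)$ and $M(\1_\Omega,\lambda)={\rm med}_\nu(\1_\Omega)$ for every $0<\lambda<\lambda_0$; in particular $(0,\lambda_0]\subset S$. Thus $S$ is an interval with left endpoint $0$, and we define $\lambda^0(\Omega):=\sup S$, which by the previous paragraph satisfies $0<\lambda^0(\Omega)\le h_1^m(\Omega)$. The only delicate point is that this supremum is attained, and here is where I would use Proposition~\ref{sec002}: taking $\lambda_n\uparrow\lambda^0(\Omega)$ with $0\in M(\1_\Omega,\lambda_n)$, the relation $0\in M(\1_\Omega,\lambda_n)$ reads $\lambda_n\nu(\Omega)=\mathcal{E}_m(0,\1_\Omega,\lambda_n)=\mathcal{E}_m(\1_\Omega,\lambda_n)$, and letting $n\to\infty$ and using the continuity of $\lambda\mapsto\mathcal{E}_m(\1_\Omega,\lambda)$ (Proposition~\ref{sec002}) and of $\lambda\mapsto\lambda\nu(\Omega)$ yields $\mathcal{E}_m(0,\1_\Omega,\lambda^0(\Omega))=\lambda^0(\Omega)\nu(\Omega)=\mathcal{E}_m(\1_\Omega,\lambda^0(\Omega))$, that is, $0\in M(\1_\Omega,\lambda^0(\Omega))$. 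The three-case conclusion of~(i) then follows immediately: for $0<\lambda<\lambda^0(\Omega)$ it is Lemma~\ref{proper001}(ii) with $\lambda_0=\lambda^0(\Omega)$; the case $\lambda=\lambda^0(\Omega)$ is the attainment just shown; and for $\lambda>\lambda^0(\Omega)$ we have $\lambda\notin S$, i.e. $0\notin M(\1_\Omega,\lambda)$. This attainment (closedness) step is the only real obstacle; everything else is bookkeeping.

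For part~(ii) I would invoke Lemma~\ref{sec001} in the form $u\in M(\1_\Omega,\lambda)\iff\1_X-u\in M(\1_{X\setminus\Omega},\lambda)$, so that $M(\1_\Omega,\lambda)=\1_X-M(\1_{X\setminus\Omega},\lambda)$ and, in particular, $1\in M(\1_\Omega,\lambda)\iff0\in M(\1_{X\setminus\Omega},\lambda)$. Therefore the hypothesis of~(ii) for $\Omega$ is exactly the hypothesis of~(i) for $X\setminus\Omega$; applying~(i) to $X\setminus\Omega$ produces a number $\lambda^0(X\setminus\Omega)$, and we set $\lambda^1(\Omega):=\lambda^0(X\setminus\Omega)$, so that $0<\lambda^1(\Omega)\le h_1^m(X\setminus\Omega)$. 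Transporting the three cases of~(i) back along the involution $u\mapsto\1_X-u$, and using that $\1_X-{\rm med}_\nu(\1_{X\setminus\Omega})={\rm med}_\nu(\1_\Omega)$ (immediate from $\1_\Omega=\1_X-\1_{X\setminus\Omega}$, which reverses all level sets), gives precisely the statement of~(ii).
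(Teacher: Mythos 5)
Your proposal is correct and follows essentially the same route as the paper: the a priori bound $\lambda\le\lambda^m_E$ obtained by comparing $0$ against $\1_E$ for $E\subset\Omega$, the definition of $\lambda^0(\Omega)$ as a supremum, the use of Proposition~\ref{sec002} to upgrade the supremum to a maximum, Lemma~\ref{proper001} for the case split, and Lemma~\ref{sec001} to deduce (ii) from (i). You merely spell out more explicitly the continuity argument for attainment and the median bookkeeping that the paper leaves implicit.
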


\begin{proof}
  (i): Let $\tilde{\Omega}\subset \Omega $ be a $\nu$-measurable set, then
$$\mathcal{E}_m(\1_{\tilde{\Omega}}, \1_\Omega, \lambda)-\mathcal{E}_m(0,\1_\Omega,\lambda)=P_m(\tilde{\Omega})-\lambda \nu(\tilde{\Omega}),$$
so that
$$\mathcal{E}_m(\1_{\tilde{\Omega}}, \1_\Omega, \lambda)<\mathcal{E}_m(0,\1_\Omega,\lambda) \iff \lambda>\lambda_{\tilde{\Omega}}^m,$$
 thus  $$\hbox{$0 \in M(\1_\Omega, \lambda)$ implies   $\lambda \leq h_1^m(\Omega)$.}$$
  Therefore, if we set
 $$\lambda^0(\Omega):= \sup \{ \lambda \, : \, 0 \in M(\1_\Omega, \lambda)\},$$
 we have that $\lambda^0(\Omega) \leq h_1^m(\Omega)$. Moreover, by Proposition~\ref{sec002}, we have that $$\lambda^0(\Omega)= \max \{ \lambda>0 \, : \, 0 \in M(\1_\Omega, \lambda)\}$$
and this is the parameter that we were looking for.
\item{ (ii)}   follows from (i) and Lemma~\ref{sec001}.  \end{proof}

  We can set $\lambda^0(\Omega)=0$ if there is no  $\lambda>0$ such that $0 \in M(\1_\Omega, \lambda)$, and $\lambda^1(\Omega)=0$ if there is no  $\lambda>0$ such that $1 \in M(\1_\Omega, \lambda)$.

We have the following formula for the thresholding parameter $\lambda(\Omega)$.
\begin{proposition}\label{jusa001}  Let $\Omega\subset X$ be a $\nu$-measurable set, then
 \label{az001}
\begin{equation}\label{ram002} \lambda(\Omega)=   \sup \left\{ \frac{P_m(\Omega)-P_m(E)}{\nu(\Omega\triangle E)} \, :\, E\subset X \hbox{ \rm $\nu$-measurable, }  \nu(\Omega\triangle E)>0 \right\}.
\end{equation}
\end{proposition}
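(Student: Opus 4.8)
Write $S$ for the supremum appearing on the right-hand side of \eqref{ram002}. We may assume $0<\nu(\Omega)<\nu(X)$, the remaining cases being immediate. The plan is to establish the two inequalities $S\le\lambda(\Omega)$ and $\lambda(\Omega)\le S$ separately, using the thresholding characterization of $\lambda(\Omega)$ provided by Proposition \ref{camp006} together with the geometric reformulation of the $(BV,L^1)$-minimization given by Theorem \ref{geomequiv}.

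For the inequality $S\le\lambda(\Omega)$, I would use that, by Proposition \ref{camp006}, $\1_\Omega\in M(\1_\Omega,\lambda(\Omega))$, so that $\1_\Omega$ minimizes $\mathcal{E}_m(\cdot,\1_\Omega,\lambda(\Omega))$. Testing this minimality against the competitor $\1_E$, for an arbitrary $\nu$-measurable set $E$, gives
$$P_m(\Omega)=\mathcal{E}_m(\1_\Omega,\1_\Omega,\lambda(\Omega))\le \mathcal{E}_m(\1_E,\1_\Omega,\lambda(\Omega))=P_m(E)+\lambda(\Omega)\,\nu(E\triangle\Omega),$$
that is, $P_m(\Omega)-P_m(E)\le\lambda(\Omega)\,\nu(\Omega\triangle E)$. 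Dividing by $\nu(\Omega\triangle E)$ whenever it is positive and taking the supremum over all such $E$ yields $S\le\lambda(\Omega)$; in particular $S$ is finite, since $\lambda(\Omega)\le\lambda_*(\Omega)\le 1$.

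For the reverse inequality $\lambda(\Omega)\le S$, I would fix $\lambda>S$ and show that $M(\1_\Omega,\lambda)=\{\1_\Omega\}$; by Proposition \ref{camp006} this forces $\lambda\ge\lambda(\Omega)$, and since it holds for every $\lambda>S$ we obtain $S\ge\lambda(\Omega)$. So let $u$ be a minimizer of $\mathcal{E}_m(\cdot,\1_\Omega,\lambda)$, which exists by Theorem \ref{thin001} and satisfies $0\le u\le 1$ $\nu$-a.e.\ by the Maximum Principle (Proposition \ref{MP2}). By Theorem \ref{geomequiv}, for almost every $t\in ]0,1[$ the level set $E_t(u)$ is a minimizer of $\mathcal{E}_m^G(\cdot,\Omega,\lambda)$ (recall $E_t(\1_\Omega)=\Omega$ for $t\in ]0,1[$), whence
$$P_m(E_t(u))+\lambda\,\nu(E_t(u)\triangle\Omega)=\mathcal{E}_m^G(E_t(u),\Omega,\lambda)\le \mathcal{E}_m^G(\Omega,\Omega,\lambda)=P_m(\Omega).$$
If $\nu(E_t(u)\triangle\Omega)>0$ for such a $t$, then by definition of $S$ we would have $P_m(\Omega)-P_m(E_t(u))\le S\,\nu(E_t(u)\triangle\Omega)<\lambda\,\nu(E_t(u)\triangle\Omega)$, contradicting the previous display. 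Hence $\nu(E_t(u)\triangle\Omega)=0$ for a.e.\ $t\in ]0,1[$, and then the usual layer-cake argument (integrate $\1_{E_t(u)\triangle\Omega}$ over $t\in ]0,1[$, apply Fubini, and use $0\le u\le 1$) shows $u=\1_\Omega$ $\nu$-a.e., so that indeed $M(\1_\Omega,\lambda)=\{\1_\Omega\}$.

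The main difficulty is essentially absorbed by Theorem \ref{geomequiv}: once the geometric equivalence is available, the whole argument reduces to comparing $\mathcal{E}_m^G(E,\Omega,\lambda)$ with $P_m(\Omega)=\mathcal{E}_m^G(\Omega,\Omega,\lambda)$, and the only delicate point is keeping track of the strict versus non-strict inequalities according to whether $\lambda$ equals $S$ or strictly exceeds it. The existence of a minimizer and the closing layer-cake computation are routine.
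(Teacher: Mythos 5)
Your proof is correct. The first inequality ($S\le\lambda(\Omega)$, testing the minimality of $\1_\Omega$ at $\lambda(\Omega)$ against competitors $\1_E$) is exactly the paper's argument. For the reverse inequality you take a genuinely different route: you fix $\lambda>S$, invoke existence of a minimizer (Theorem \ref{thin001}), the maximum principle (Proposition \ref{MP2}) and the geometric equivalence (Theorem \ref{geomequiv}) to show that every level set of a minimizer must coincide with $\Omega$ up to a $\nu$-null set, conclude $M(\1_\Omega,\lambda)=\{\1_\Omega\}$, and then let $\lambda\downarrow S$. The paper is more economical here: it works directly at $\lambda=\alpha:=S$ and observes that the definition of $\alpha$ gives $P_m(E)+\alpha\,\nu(E\triangle\Omega)\ge P_m(\Omega)$ for \emph{every} measurable $E$, so that by the coarea-type identity \eqref{esetformulationN} one gets $\mathcal{E}_m(u,\1_\Omega,\alpha)=\int_0^1\bigl(P_m(E_t(u))+\alpha\,\nu(E_t(u)\triangle\Omega)\bigr)\,dt\ge P_m(\Omega)=\mathcal{E}_m(\1_\Omega,\1_\Omega,\alpha)$ for all $u$, i.e.\ $\1_\Omega\in M(\1_\Omega,\alpha)$ and hence $\lambda(\Omega)\le\alpha$ by definition. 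This avoids both the existence theorem and Theorem \ref{geomequiv}, and it yields the slightly stronger conclusion $\1_\Omega\in M(\1_\Omega,S)$ without appealing to the Lipschitz continuity of $\lambda\mapsto\mathcal{E}_m(f,\lambda)$ hidden in Proposition \ref{camp006}; your version, on the other hand, delivers as a by-product the uniqueness statement $M(\1_\Omega,\lambda)=\{\1_\Omega\}$ for every $\lambda>S$. One small remark in your favor: the paper's own proof of the first inequality writes ``$E\subset\Omega$'' where an arbitrary measurable $E\subset X$ is needed (and is what the argument actually uses); your formulation with arbitrary $E$ is the correct one.
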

\begin{proof}
Set
$\alpha:=\sup \left\{ \frac{P_m(\Omega)-P_m(E)}{\nu(\Omega\triangle E)} \, :\, E\subset X \hbox{  $\nu$-measurable, }  \nu(\Omega\triangle E)>0 \right\}$ and let $E\subset\Omega$ be a $\nu$-measurable set with  $\nu(\Omega\triangle E)>0$. Then, since
  $\1_\Omega\in M(\1_\Omega,\lambda(\Omega))$, we have that
  $$\mathcal{E}_m(\1_{E}, \1_\Omega, \lambda(\Omega))=P_m(E)+\lambda(\Omega)\nu(\Omega\triangle E)\ge
  \mathcal{E}_m(\1_\Omega,\1_\Omega,\lambda(\Omega))=
  P_m(\Omega), $$
  from where we obtain that
  $$\lambda(\Omega)\ge\frac{P_m(\Omega)-P_m(E)}{\nu(\Omega\triangle E)},$$
  and, hence, $$\lambda(\Omega)\ge \alpha.$$
 On the other hand, by \eqref{esetformulationN} and the definition of $\alpha$, for every $u \in L^1(X,\nu)$ we have
$$\mathcal{E}_m(u, \1_\Omega, \alpha) = \int_{0}^{1}  \Big(P_m (E_t(u)) + \alpha \nu( E_t(u) \bigtriangleup \Omega)\Big) dt\ge P_m(\Omega)=\mathcal{E}_m(\1_\Omega, \1_\Omega, \alpha),
$$
thus $\1_\Omega\in M(\1_\Omega,\alpha),$ and, consequently,
$$\lambda(\Omega)\le \alpha.$$
\end{proof}

We have the following formula for the thresholding parameter $\lambda^0(\Omega)$.

\begin{proposition}\label{jusa002}  Let $\Omega\subset X$ be a $\nu$-measurable set, then
$$\lambda^0(\Omega)=\inf\left\{ \frac{P_m(E)}{\nu(\Omega)-\nu(\Omega\triangle E)} \ : E\subset X \hbox{ \rm $\nu$-measurable, } \nu(\Omega\triangle E)<\nu(\Omega) \right\}.$$
\end{proposition}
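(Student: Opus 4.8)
The plan is to characterize when $0\in M(\1_\Omega,\lambda)$ by an inequality quantified over all competitor sets, and then optimize over $\lambda$. Write $\beta$ for the infimum appearing on the right-hand side of the claimed identity. The first step is to prove that, for every $\lambda>0$,
\begin{equation}\label{eqcrit0}
0\in M(\1_\Omega,\lambda)\iff \lambda\big(\nu(\Omega)-\nu(\Omega\triangle E)\big)\le P_m(E)\ \text{ for all $\nu$-measurable }E\subset X.
\end{equation}
Since $\mathcal{E}_m(0,\1_\Omega,\lambda)=\lambda\nu(\Omega)$ and $\mathcal{E}_m(\1_E,\1_\Omega,\lambda)=P_m(E)+\lambda\nu(E\triangle\Omega)$, the implication ``$\Rightarrow$'' follows immediately by testing the minimality of $0$ against $u=\1_E$ (which belongs to $L^1(X,\nu)$ since $\nu$ is finite). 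For ``$\Leftarrow$'', by the truncation argument used in the proof of Proposition~\ref{MP2} it suffices to compare $\mathcal{E}_m(0,\1_\Omega,\lambda)$ with $\mathcal{E}_m(u,\1_\Omega,\lambda)$ only for competitors $u$ with $0\le u\le1$; for such $u$, formula~\eqref{esetformulationN} gives
\begin{align*}
\mathcal{E}_m(u,\1_\Omega,\lambda)&=\int_0^1\Big(P_m(E_t(u))+\lambda\,\nu(E_t(u)\triangle\Omega)\Big)\,dt\\
&\ge\int_0^1\lambda\nu(\Omega)\,dt=\mathcal{E}_m(0,\1_\Omega,\lambda),
\end{align*}
where the pointwise bound $P_m(E_t(u))+\lambda\,\nu(E_t(u)\triangle\Omega)\ge\lambda\nu(\Omega)$ is exactly the hypothesis of~\eqref{eqcrit0} applied with $E=E_t(u)$.

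The second step is to turn~\eqref{eqcrit0} into a threshold on $\lambda$. For a $\nu$-measurable set $E$ with $\nu(\Omega\triangle E)\ge\nu(\Omega)$ the inequality in~\eqref{eqcrit0} is automatic, its left-hand side being $\le0\le P_m(E)$; while for $E$ with $\nu(\Omega\triangle E)<\nu(\Omega)$ it is equivalent to $\lambda\le P_m(E)\big/\big(\nu(\Omega)-\nu(\Omega\triangle E)\big)$. Taking the infimum over this last family of sets, \eqref{eqcrit0} holds precisely when $\lambda\le\beta$. (That family is nonempty exactly when $\nu(\Omega)>0$; in that case, picking a $\nu$-measurable $E\subset\Omega$ with $\nu(E)>0$ one has $\nu(\Omega\triangle E)=\nu(\Omega)-\nu(E)<\nu(\Omega)$ and the corresponding quotient is $P_m(E)/\nu(E)\le1$, so $0\le\beta<\infty$; when $\nu(\Omega)=0$ the family is empty, $\beta=+\infty$, and indeed $\1_\Omega=0$ $\nu$-a.e., so $0\in M(\1_\Omega,\lambda)$ for every $\lambda>0$.)

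Combining the two steps, $\{\lambda>0:\ 0\in M(\1_\Omega,\lambda)\}=(0,\beta]$ (understood as $(0,+\infty)$ if $\beta=+\infty$ and as $\emptyset$ if $\beta=0$). If this set is nonempty, then $\lambda^0(\Omega)$ is by definition its supremum, namely $\beta$; if it is empty, then $\beta=0$ and, by the convention adopted after Proposition~\ref{laseg01}, $\lambda^0(\Omega)=0=\beta$. In every case $\lambda^0(\Omega)=\beta$, which is the asserted formula. I expect the only point genuinely requiring care to be the ``$\Leftarrow$'' half of~\eqref{eqcrit0}, i.e.\ the reduction to competitors with $0\le u\le1$ and the correct use of~\eqref{esetformulationN}; the remainder reduces to elementary manipulations of the infimum and a check of the degenerate cases.
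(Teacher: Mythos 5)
Your proof is correct and follows essentially the same route as the paper: the inequality $\lambda^0(\Omega)\le\beta$ comes from testing minimality of $0$ against indicator competitors $\1_E$, and the reverse inequality from the layer-cake representation \eqref{esetformulationN} together with splitting the level sets according to the sign of $\nu(E_t(u)\triangle\Omega)-\nu(\Omega)$. The only difference is presentational: you make explicit the truncation to competitors $0\le u\le1$ (needed for \eqref{esetformulationN} to apply) and the degenerate cases $\beta\in\{0,+\infty\}$, which the paper leaves implicit.
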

\begin{proof}Set $\alpha:=\inf\left\{ \frac{P_m(E)}{\nu(\Omega)-\nu(\Omega\triangle E)} \ : E\subset X \hbox{  $\nu$-measurable, } \nu(\Omega\triangle E)<\nu(\Omega) \right\}.$ Since
$$\mathcal{E}_m(\1_E,\1_\Omega,\lambda^0(\Omega))=P_m(E)+\lambda^0(\Omega)\nu(\Omega\triangle E) \ \ \hbox{and} \ \ \mathcal{E}_m(0,\1_\Omega,\lambda^0(\Omega))=\lambda^0(\Omega)\nu(\Omega) ,$$
we have that $\lambda^0(\Omega)\le \alpha$.
 Let us see the opposite inequality. For this it is enough to prove that $0\in M(\1_\Omega,\alpha)$, that is
$$\mathcal{E}_m(u, \1_\Omega, \alpha) \ge \mathcal{E}_m(0, \1_\Omega, \alpha) \quad \forall u \in L^1(X,\nu).$$
By \eqref{esetformulationN}, this inequality is  equivalent to
$$\int_{0}^{1}  \Big(P_m (E_t(u)) + \alpha \Big(
\nu( E_t(u) \bigtriangleup \Omega)-\nu(\Omega)\Big)\Big) dt\ge 0 \quad \forall u \in L^1(X,\nu).$$
Now,
$$\begin{array}{c}\displaystyle
 \int_{0}^{1}  \Big(P_m (E_t(u)) + \alpha \Big(
\nu( E_t(u) \bigtriangleup \Omega)-\nu(\Omega)\Big)\Big) dt
\\ \\
\displaystyle
= \int_{\{t:\nu( E_t(u) \bigtriangleup \Omega)-\nu(\Omega)\ge 0\}}  \Big(P_m (E_t(u)) + \alpha \Big(
\nu( E_t(u) \bigtriangleup \Omega)-\nu(\Omega)\Big)\Big) dt\\ \\
\displaystyle
\qquad + \int_{\{t:\nu( E_t(u) \bigtriangleup \Omega)-\nu(\Omega)< 0\}}  \Big(P_m (E_t(u)) + \alpha \Big(
\nu( E_t(u) \bigtriangleup \Omega)-\nu(\Omega)\Big)\Big) dt,
\end{array}$$
but the first integral in the right hand side is trivially non-negative and the second one is also non-negative by the definition of $\alpha$.
\end{proof}

 \begin{remark}\label{remarkthres}{\rm
 Note that, if $\1_E\in M(\1_\Omega,\lambda)$, then
 \begin{equation}\label{toq001}\lambda^-(E)\le \lambda\le\lambda^+(E),
 \end{equation}
  where
$$\lambda^-(E):=\sup \left\{ \frac{P_m(U)-P_m(E)}{\nu(\Omega\triangle E)-\nu(\Omega\triangle U)} :  U\subset X \hbox{  $\nu$-measurable, } \nu(\Omega\triangle U)>\nu(\Omega\triangle E) \right\},$$
$$\lambda^+(E):=\inf \left\{\frac{P_m(U)-P_m(E)}{\nu(\Omega\triangle E)-\nu(\Omega\triangle U)} :  U\subset X \hbox{ $\nu$-measurable, } \nu(\Omega\triangle U)<\nu(\Omega\triangle E) \right\}.$$
Indeed, if $\1_E\in M(\1_\Omega,\lambda)$, then, for any $\nu$-measurable set $U\subset X$,
$$P_m(E)+\lambda \nu(\Omega\triangle E)\le P_m(U)+\lambda \nu(\Omega\triangle U)$$
thus, if $\nu(\Omega\triangle U)>\nu(\Omega\triangle E)$, we have that
$$\lambda\ge \frac{P_m(U)-P_m(E)}{\nu(\Omega\triangle E)-\nu(\Omega\triangle U)}  ,$$
and, if $\nu(\Omega\triangle U)<\nu(\Omega\triangle E)$, we have that
$$\lambda\le \frac{P_m(U)-P_m(E)}{\nu(\Omega\triangle E)-\nu(\Omega\triangle U)}.$$

Furthermore, observe that, if $\1_E\in M(\1_\Omega,\lambda)$, then
  \begin{equation}\label{toq002}P_m(E)=\inf\left\{P_m(U)\,:\, U\subset X \hbox{  $\nu$-measurable, } \nu(\Omega\triangle U)=\nu(\Omega\triangle E)\right\}.
\end{equation}
Conversely, \eqref{toq001} and~\eqref{toq002} imply that $\1_E\in M(\1_\Omega,\lambda)$.}
 \end{remark}

 It is known (see~\cite{DAG}) that a thresholding property for a set in $\mathbb{R}^2$ implies calibrability of the set. From the previous results we obtain  the non-local counterpart of this result.

 \begin{proposition}\label{puvi001}
   Let $\Omega\subset X$ be a $\nu$-measurable set with $0<\nu(\Omega)<\nu(X)$, if there exists a thresholding parameter $\lambda^*>0$ such that
\item{ (1)} $0\in M(\1_\Omega,\lambda)\quad\forall\, 0<\lambda<\lambda^*$, and
\item{ (2)}  $\1_\Omega\in M(\1_\Omega,\lambda)\quad\forall\, \lambda>\lambda^*$,

    \noindent then $$  \lambda(\Omega)=\lambda^*=\lambda_\Omega^m,$$
    and $\left(\lambda_\Omega^m,\frac{1}{\nu(\Omega)}\1_\Omega\right)$ is an $m$-eigenpair. In particular, $\Omega$ is $m$-calibrable.
\end{proposition}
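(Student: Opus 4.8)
The plan is to combine the two thresholding hypotheses with the structural results already established in this subsection, essentially squeezing $\lambda^*$ between the two natural critical parameters $\lambda(\Omega)$ and $\lambda^0(\Omega)$ and then forcing equalities.

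First I would observe that hypothesis (2), that $\1_\Omega\in M(\1_\Omega,\lambda)$ for all $\lambda>\lambda^*$, together with the definition of $\lambda(\Omega)$ from Proposition~\ref{camp006} as $\min\{\lambda:\1_\Omega\in M(\1_\Omega,\lambda)\}$, gives immediately $\lambda(\Omega)\le\lambda^*$. On the other hand, hypothesis (1), that $0\in M(\1_\Omega,\lambda)$ for all $0<\lambda<\lambda^*$, says via Proposition~\ref{laseg01}(i) that $\lambda^0(\Omega)$ is well defined and $\lambda^0(\Omega)\ge\lambda^*$ (indeed the supremum defining $\lambda^0(\Omega)$ runs over a set containing $(0,\lambda^*)$). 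Since Proposition~\ref{laseg01}(i) also gives $\lambda^0(\Omega)\le h_1^m(\Omega)\le\lambda_\Omega^m$, and Proposition~\ref{camp006} gives $\lambda_\Omega^m\le\lambda(\Omega)$, we obtain the chain
$$\lambda^*\le\lambda^0(\Omega)\le h_1^m(\Omega)\le \lambda_\Omega^m\le\lambda(\Omega)\le\lambda^*,$$
which collapses to $\lambda^*=\lambda^0(\Omega)=h_1^m(\Omega)=\lambda_\Omega^m=\lambda(\Omega)$. In particular $h_1^m(\Omega)=\lambda_\Omega^m$, which is exactly the statement that $\Omega$ is $m$-calibrable.

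Finally, to get the $m$-eigenpair conclusion I would invoke the equivalence already packaged in Proposition~\ref{camp006}, namely \eqref{firstm001}: $\lambda(\Omega)=\lambda_\Omega^m$ if, and only if, $\left(\lambda_\Omega^m,\frac{1}{\nu(\Omega)}\1_\Omega\right)$ is an $m$-eigenpair. Since we have just shown $\lambda(\Omega)=\lambda_\Omega^m$, this gives the eigenpair property at once; alternatively one can note that $\lambda(\Omega)=\lambda_\Omega^m$ forces $\1_\Omega\in M(\1_\Omega,\lambda_\Omega^m)$ and apply Proposition~\ref{camp004} (or Corollary~\ref{camp005}). The $m$-calibrability of $\Omega$ is then either read off from $h_1^m(\Omega)=\lambda_\Omega^m$ directly or recovered from the eigenpair statement via the discussion preceding Corollary~\ref{proper003}.

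The only place that requires a little care — and what I would treat as the main (minor) obstacle — is verifying that hypothesis (1) genuinely places $\lambda^*$ below $\lambda^0(\Omega)$ rather than merely below $h_1^m(\Omega)$: one must check that $0\in M(\1_\Omega,\lambda)$ holding on the whole interval $(0,\lambda^*)$ implies $\lambda^*\le\lambda^0(\Omega)=\max\{\lambda>0:0\in M(\1_\Omega,\lambda)\}$, using the monotonicity/continuity encoded in Proposition~\ref{sec002} and Lemma~\ref{proper001}(iii). Once that squeeze is in place the rest is bookkeeping with the already-proven propositions.
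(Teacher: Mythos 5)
Your proposal is correct and takes essentially the same route as the paper: both arguments squeeze $\lambda^*$ via $\lambda_\Omega^m\le\lambda(\Omega)\le\lambda^*\le\lambda_\Omega^m$ (the upper bound $\lambda(\Omega)\le\lambda^*$ from hypothesis (2) and the definition of $\lambda(\Omega)$, the lower bound $\lambda_\Omega^m\le\lambda(\Omega)$ from Proposition~\ref{camp006}) and then read off the eigenpair from \eqref{firstm001}. The only difference is that the paper gets $\lambda^*\le\lambda_\Omega^m$ directly from the comparison $\mathcal{E}_m(0,\1_\Omega,\lambda)\le\mathcal{E}_m(\1_\Omega,\1_\Omega,\lambda)$, i.e.\ $\lambda\nu(\Omega)\le P_m(\Omega)$, whereas you route it through $\lambda^*\le\lambda^0(\Omega)\le h_1^m(\Omega)\le\lambda_\Omega^m$ using Proposition~\ref{laseg01}; this is a harmless repackaging of the same energy comparison (and the step you flag as needing care is immediate from the definition of $\lambda^0(\Omega)$ as a supremum), with the small bonus that it yields $h_1^m(\Omega)=\lambda_\Omega^m$, hence calibrability, without passing through the eigenpair.
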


\begin{proof}
    By (1), we have that
    $$\mathcal{E}_m(0, \1_\Omega, \lambda)\le \mathcal{E}_m(\1_\Omega, \1_\Omega, \lambda)  \quad\forall\, 0<\lambda<\lambda^*,$$
    that is,
    $$\lambda\nu(\Omega)\le P_m(\Omega) \quad\forall\, 0<\lambda<\lambda^* ,$$
    from where it follows that
     \begin{equation}\label{camp002} \lambda\le\lambda_\Omega^m \quad\forall\, 0<\lambda<\lambda^* .
    \end{equation}
       Hence, $$ \lambda^*\le \lambda_\Omega^m.$$
       On the other hand, by (2) and the definition of $\lambda(\Omega)$,
       $$\lambda(\Omega)\le \lambda^*.$$
       Then, since $\lambda_\Omega^m\le\lambda(\Omega)$, we get $$\lambda_\Omega^m\le\lambda(\Omega)\le\lambda^*\le\lambda_\Omega^m.$$
       Thus, by Proposition \ref{camp006}, we have that $\left(\lambda_\Omega^m,\frac{1}{\nu(\Omega)}\1_\Omega\right)$ is an $m$-eigenpair.
\end{proof}

The following example proves that the minimizer when the observed image is the characteristic function of a set $\Omega$ need not be the characteristic function of a set contained in $\Omega$. Note that in the continuous setting, when $\Omega$ is convex, it is known that for almost all $\lambda>0$ there is a unique minimizer which, moreover, is the characteristic function of a set contained in $\Omega$ (see \cite[Corollary~5.3]{ChanEsedoglu}). We also observe how, with the ROF model with $L^1$- fidelity term, the scale space is mostly constant and makes sudden transitions at certain values of the scale paramenter. In particular, we see how a set may suddenly vanish.

\begin{example}\label{elej001}{\rm
Consider the locally finite weighted discrete graph with vertex set $X=\{1,2,3,4,5,6\}$ and weights $w_{1,2}=5$, $w_{2,3}=6$, $w_{3,4}=2$, $w_{4,5}=1$ and $w_{5,6}=3$. Let $\Omega=\{1,2\}$.

We have that
$$\left\{ \begin{array}{ll}
              \{0\} = M(\1_{\{1,2\}},\lambda)  & {\rm for } \ 0<\lambda< \frac15=\lambda^0(\Omega), \\[8pt]
            \{c\1_{\{1,2,3,4\}} \, : \, c\in[0,1] \}= M(\1_{\{1,2\}},\lambda) & {\rm for } \ \lambda= \frac15, \\[8pt]
            \{\1_{\{1,2,3,4\}}\}= M(\1_{\{1,2\}},\lambda) & {\rm for } \ \frac15< \lambda< \frac13,   \\[8pt]
            \{\1_{\{1,2,3\}}+c\1_{\{4\}}\, : \, c\in[0,1] \}= M(\1_{\{1,2\}},\lambda) & {\rm for } \ \lambda= \frac13, \\[8pt]
            \{\1_{\{1,2,3\}}\}= M(\1_{\{1,2\}},\lambda) & {\rm for } \ \frac13< \lambda< \frac12,  \\[8pt]
            \{\1_{\{1,2\}}+c\1_{\{3\}}\, : \, c\in[0,1] \}= M(\1_{\{1,2\}},\lambda) & {\rm for } \ \lambda=\frac12, \\[8pt]
            \{\1_{\{1,2\}}\}= M(\1_{\{1,2\}},\lambda) & {\rm for } \ \lambda> \frac12=\lambda(\Omega).
                      \end{array}\right.$$
Indeed, to start with, note that
$$\mathcal{E}_m(\1_{\Omega},\1_\Omega,\lambda)=6=:h_1(\lambda)   ,$$
$$\mathcal{E}_m(\1_{\{1,2,3\}},\1_\Omega,\lambda)=2+8\lambda=:h_2(\lambda)   , $$
$$\mathcal{E}_m(\1_{\{1,2,3, 4\}},\1_\Omega,\lambda)=1+11\lambda=:h_3(\lambda), $$
and
$$  \mathcal{E}_m(0,\1_\Omega,\lambda)=16\lambda=:h_4(\lambda) . $$
 We have that,
\item{ $\bullet$} if $0\le\lambda<\frac15$, then $h_4(\lambda)<h_i(\lambda)$ for $i=1,2,3$,
\item{ $\bullet$} if $\frac15<\lambda<\frac13$, then $h_3(\lambda)<h_i(\lambda)$ for $i=1,2,4$,
\item{ $\bullet$} if $\frac13<\lambda<\frac12$, then $h_2(\lambda)<h_i(\lambda)$ for $i=1,3,4$,
\item{ $\bullet$} and, if $\frac12<\lambda\le1$, then $h_1(\lambda)<h_i(\lambda)$ for $i=2,3,4$.
\\[8pt] Moreover, for any other set $F\subseteq\{1,2,3,4,5,6\}$ different to $\{1,2\},\ \{1,2,3\}$ and $\ \{1,2,3,4\}$, and for any $\lambda>0$, we have that $\mathcal{E}_m(\1_F,\1_\Omega,\lambda)$ is larger than $\min \{h_1(\lambda),h_2(\lambda),h_3(\lambda),h_4(\lambda)\}$.

Following Remark \ref{aboverem01}, to see that $\1_\Omega\in M(\1_\Omega,\frac12)$, take
$$g(1,2)=-\frac{1}{10} , \ g(2,3)=-1 , \ g(3,4)=-1 , \ g(4,5)=-\frac12 , \ g(5,6)=0 $$
and
$$\xi(1)=-\frac15, \ \xi(2)=-1 , \ \xi(3)=1 , \ \xi(4)=1 , \ \xi(5)=\frac14 , \ \xi(6)=0 \ . $$
 For $\lambda<\frac12$, since $h_4(\lambda)>h_3(\lambda)$, we have that
  $\1_\Omega\not\in M(\1_\Omega,\lambda)$. Moreover, by Lemma~\ref{proper001}~(i)  we get that $$\{\1_\Omega\}= M(\1_\Omega,\lambda)\quad\hbox{for }\lambda>\frac12.$$

Since $\mathcal{E}_m(\1_{\{1,2,3\}},\1_\Omega,\frac12)=\mathcal{E}_m(\1_\Omega,\1_\Omega,\frac12) $ we have that $\1_{\{1,2,3\}}\in M(\1_\Omega,\frac12)$ and using the convexity of $M(f,\lambda)$ we get that
$$\big\{\1_{\{1,2\}}+c\1_{\{3\}}\, : \, c\in[0,1] \big\}\subset M\left(\1_{\{1,2\}},1/2\right).$$
 Now, $\{1,2\}$ and $\{1,2,3\}$ are the unique minimizers of $\mathcal{E}_m^G(\cdot, \Omega, 1/2)$, thus, by Theorem~\ref{geomequiv}, we have that
$$\big\{\1_{\{1,2\}}+c\1_{\{3\}}\, : \, c\in[0,1] \big\}= M\left(\1_{\{1,2\}},1/2\right).$$

To see that $\1_{\{1,2,3\}}\in M(\1_\Omega,\frac13)$, take
$$g(1,2)=-\frac15 , \ g(2,3)=-\frac79 , \ g(3,4)=-1 , \ g(4,5)=-1 , \ g(5,6)=0 $$
and
$$\xi(1)=-\frac35, \ \xi(2)=-1 , \ \xi(3)=1 , \ \xi(4)=1 , \ \xi(5)=\frac34 , \ \xi(6)=0 \ . $$
Consequently, by Lemma~\ref{proper001}~(iii) we have that $\1_{\{1,2,3\}}\in M(\1_\Omega,\lambda)$ for  $\frac13\le \lambda\le \frac12$.  Moreover,  $\{1,2,3\}$ is the unique minimizer of $\mathcal{E}_m^G(\cdot, \Omega, \lambda)$ for such $\lambda$'s thus, by Theorem~\ref{geomequiv},
$\1_{\{1,2,3\}}$ is the unique element in  $M(\1_\Omega,\lambda)$ for $\frac13< \lambda< \frac12$.

Since $\mathcal{E}_m(\1_{\{1,2,3,4\}},\1_\Omega,\frac13)=\mathcal{E}_m(\1_{\{1,2,3\}},\1_\Omega,\frac13) $ we have that $\1_{\{1,2,3,4\}}\in M(\1_\Omega,\frac13)$ and, as above,   by Theorem~\ref{geomequiv},
$$\big\{\1_{\{1,2,3\}}+c\1_{\{4\}}\, : \, c\in[0,1] \big\}= M\left(\1_{\{1,2\}},1/3\right).$$

Now, to see that $\1_{\{1,2,3,4\}}\in M(\1_\Omega,\frac15)$, take
$$g(1,2)=-\frac15 , \ g(2,3)=-\frac{8}{15} , \ g(3,4)=-\frac45 , \ g(4,5)=-1 , \ g(5,6)=-\frac{1}{15} $$
and
$$\xi(1)=-1, \ \xi(2)=-1 , \ \xi(3)=1 , \ \xi(4)=1 , \ \xi(5)=1 , \ \xi(6)=\frac13 \ . $$
Then again, by Lemma \ref{proper001} (iii), we have that $\1_{\{1,2,3,4\}}\in M(\1_\Omega,\lambda)$ for $\frac15\le \lambda\le \frac13$ and as before, by Theorem~\ref{geomequiv}, $\left\{\1_{\{1,2,3,4\}}\right\}= M(\1_\Omega,\lambda)$ for $\frac15< \lambda< \frac13$.

Finally, the fact that $\mathcal{E}_m(\1_{\{1,2,3,4\}},\1_\Omega,\frac15)=\mathcal{E}_m(0,\1_\Omega,\frac15) $
gives, by  Theorem~\ref{geomequiv}, that
$$\big\{c\1_{\{1,2,3,4\}} \, : \, c\in[0,1] \big\}= M\left(\1_{\{1,2\}},1/5\right)$$
and, by Lemma \ref{proper001} (ii),   $\{0\}= M(\1_\Omega,\lambda)$ for $0\le \lambda<\frac15$.

Note that $\lambda_\Omega^m=\frac38<\frac12=\lambda(\Omega)$ thus, by Proposition \ref{camp006}, $(\frac38, \frac{1}{16} \1_\Omega)$ is not an $m$-eigenpair. However, $\Omega$ is $m$-calibrable since it consists of two points. Note also that
$$\frac{P_m(\Omega)-P_m(\{1,2,3\})}{\nu(\Omega\triangle \{1,2,3\})}=\frac{6-2}{8}=\frac12=\lambda(\Omega) , $$
$$\frac{P_m(\{1,2,3,4\})}{\nu(\Omega)-\nu(\Omega\triangle \{1,2,3,4\})}=\frac{1}{16-11}=\frac15=\lambda^0(\Omega) , $$
and, regarding Corollary \ref{remarkthres},
$$\frac{P_m(\{1,2,3\})-P_m(\{1,2,3,4\})}{\nu(\Omega\triangle \{1,2,3,4\})-\nu(\Omega\triangle \{1,2,3\})}=\frac{2-1}{11-8}=\frac13 \ . $$
 Finally, observe that by Corollary \ref{mmeancurvatureandlambdaforgraphs},
since
$$H^m_{\partial \{1,2,3,4\}}(4)=1-2m_4(\{1,2,3,4\}=1-2\frac{2}{3}=-\frac13,$$
in order for $  \{1,2,3,4\}$ to be a minimizer of  $\mathcal{E}_m^G(\cdot, \{1,2\}, \lambda)$, we must have
$$\lambda\le \frac13,$$
and $\frac13$ is precisely the upper thresholding parameter for this set.

Observe that, if we add a loop at vertex $4$, $w_{4,4}=\alpha>0$, the set $\{1,2,3,4\}$ can be  a minimizer of  $\mathcal{E}_m^G(\cdot, \{1,2\}, \lambda)$ only if (by~\eqref{gives01})
$$\lambda\le -H^m_{\partial \{1,2,3,4\}}(4)-\frac{w_{4,4}}{d_4}=\frac{1}{3+\alpha}<\frac13\,.$$
}\end{example}

 In the following example, for which we avoid to give as much detail as in the previous one, we can see how, as the value of $\lambda$ is decreased, minimizers become coarser as smaller objects merge together to form larger ones.

\begin{example}\label{elej002}{\rm
In $\mathbb{Z}^2$ with the Hamming distance and $w_{i,j}=1$ for every $i,j$, consider the set $\Omega$ given in Figure \ref{fig1}. Then, for $\frac13<\lambda<\frac25$, the minimizer for the ROF problem with the $L^1$-fidelity term and datum $\1_\Omega$ is the characteristic function of the set $E$ represented in \ref{fig2}. This set $E$ merges together the two components  of $\Omega$. Note that
$$\mathcal{E}_m(\1_{\Omega},\1_\Omega,\lambda)=28   ,$$
$$\mathcal{E}_m(\1_E,\1_\Omega,\lambda)=20+20\lambda   , $$
and
$$  \mathcal{E}_m(0,\1_\Omega,\lambda)=80\lambda . $$
By restricting the ambient space to a big enough bounded subset of $\Z^2$ and recalling Example \ref{JJ}(\ref{dom00606}) we obtain a finite invariant measure and the same calculations work.
\begin{figure}[h]
\centering
  \begin{subfigure}[t]{0.45\textwidth}
  \includegraphics[width=\textwidth]{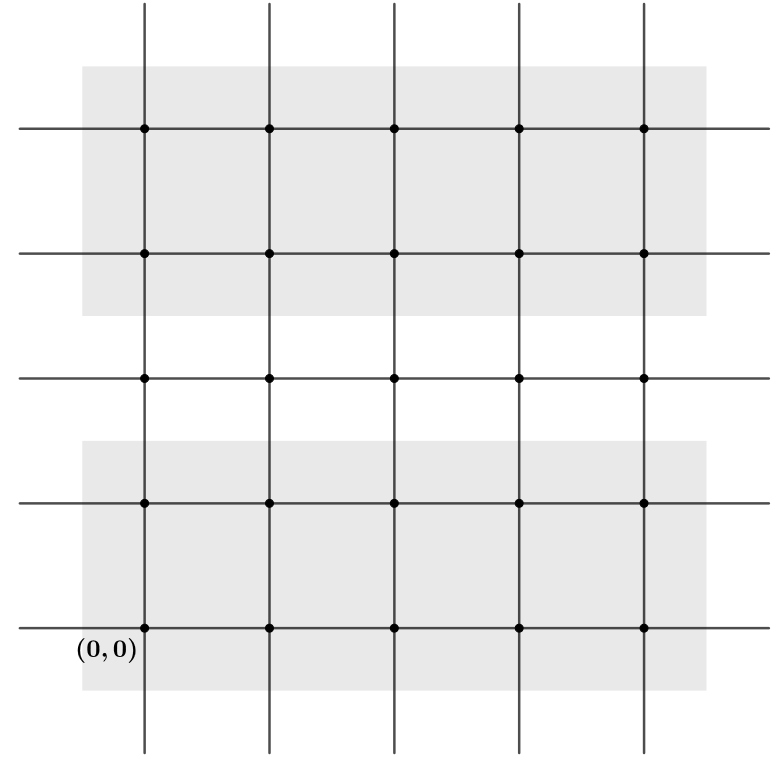}
  \caption{$\Omega$ is the set formed by the points in the shaded region. }
  \label{fig1}
  \end{subfigure}\hspace{1cm}
 \begin{subfigure}[t]{0.45\textwidth}
  \includegraphics[width=\textwidth]{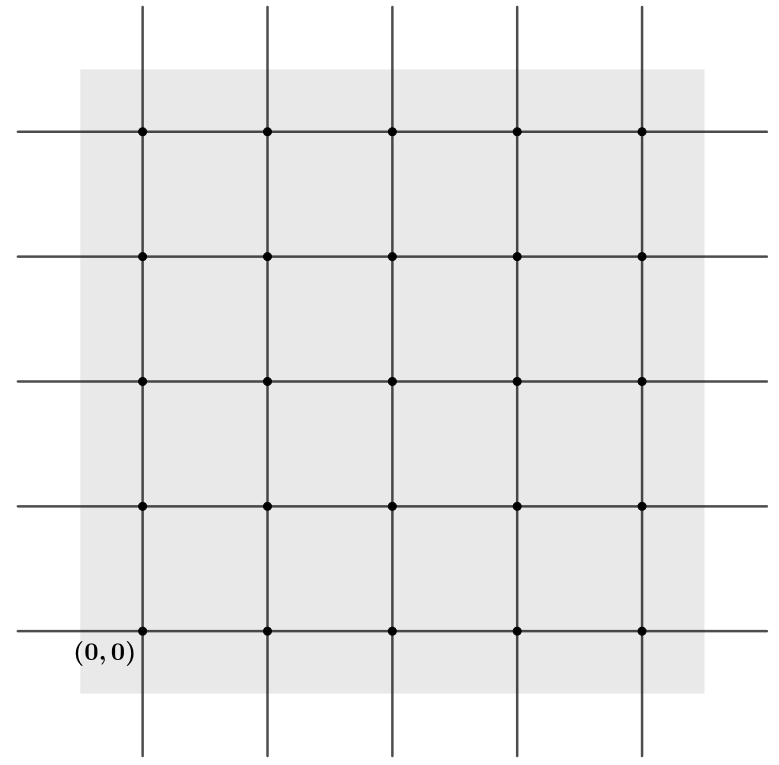}
  \caption{The minimizer, $E$, for $\frac13<\lambda<\frac25$ is the set formed by the points in the shaded region.}
  \label{fig2}
  \end{subfigure}
  \caption{The point $(0,0)$ is labelled in the graphs, and the adjacent points are represented by the dots. }\label{fig}
\end{figure}
}
\end{example}

\subsection{The Gradient Descent Method}
 In order to apply   this method   one needs to solve the Cauchy problem
\begin{equation}\label{L1ParabROFmodN}
 \left\{ \begin{array}{ll} v_t \in \Delta_1^m v(t)- \lambda {\rm sign}(v(t) - f) \quad &\hbox{in} \ (0, T) \times X \\ \\   v(0, x) = v_0(x) \quad &\hbox{in} \ x \in X,\end{array} \right.
 \end{equation}
that  can be rewritten as the following abstract Cauchy problem in $L^2(X,\nu)$
\begin{equation}\label{L1ACP2}
v'(t) + \partial  \mathcal{E}_m(u, f, \lambda)(v(t)) \ni 0, \quad v(0) = v_0.
\end{equation}
 Let  $f$ be in $L^1(X,\nu)$    .  Since $\mathcal{E}_m(\cdot, f, \lambda)$ is convex and lower semi-continuous,  by the theory of maximal monotone operators~(\cite{Brezis}), we have that, for any initial data $v_0 \in L^2(X,\nu)$, problem \eqref{L1ACP2} has a unique strong solution. Therefore, if we define a solution of problem  \eqref{L1ParabROFmodN} as a function $v \in C(0, T; L^2(X,\nu)) \cap W^{1,1}_{loc}(0, T; L^2(X,\nu))$ such that $v(0, x) = v_0(x)$ for $\nu$-a.e. $x \in X$ and such that there exists $\xi(t) \in {\rm sign}(v(t) - f)$ satisfying
$$ \lambda \xi(t) + v_t(t) \in \Delta_1^m (v(t)) \quad \hbox{for almost all} \ \ t \in (0, T),$$
we have the following existence and uniqueness result.

\begin{theorem}\label{L1ExistUniqROF}  For every $v_0 \in L^2( X,\nu)$ there exists a unique strong solution of the Cauchy problem~\eqref{L1ParabROFmodN} in $(0,T)$ for any $T > 0$.
Moreover, we have the following contraction principle in any $L^q(X,\nu)$--space, $1\le q\le \infty$:
\begin{equation}\label{L1contrprin}\Vert v(t)-w(t)\Vert_q\le \Vert v_0-w_0\Vert_q\quad \forall\, 0<t<T,\end{equation}
for any pair of solutions $v,\, w$ of problem~\eqref{L1ParabROFmodN} with initial datum $v_0$, $w_0$ respectively.
\end{theorem}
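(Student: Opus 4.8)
The plan is to recognize \eqref{L1ParabROFmodN} as the gradient flow \eqref{L1ACP2} of the convex functional $\mathcal{E}_m(\cdot,f,\lambda)=\mathcal{F}_m(\cdot)+\lambda\,\mathcal{G}_f(\cdot)$ in the Hilbert space $L^2(X,\nu)$, where $\mathcal{G}_f(u):=\int_X|u-f|\,d\nu$, and then to invoke two pieces of machinery already in play: Brezis' theory of evolution equations governed by subdifferentials of convex lower semi-continuous functions (\cite{Brezis}), for existence and uniqueness, and the theory of completely accretive operators (\cite{BCr2}), for the $L^q$-contraction.

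First I would check the hypotheses of the abstract theory. Since $\nu$ is finite, $L^2(X,\nu)\subset L^1(X,\nu)\subset BV_m(X)$, so both $\mathcal{F}_m$ and $\mathcal{G}_f$ are finite on all of $L^2(X,\nu)$; $\mathcal{F}_m$ is convex and lower semi-continuous (lower semi-continuity being Lemma~\ref{semicont}), while $\mathcal{G}_f$ is convex and Lipschitz on $L^1(X,\nu)$, hence continuous on $L^2(X,\nu)$. Thus $\mathcal{E}_m(\cdot,f,\lambda)$ is proper, convex, lower semi-continuous with effective domain $L^2(X,\nu)$, so $\partial\mathcal{E}_m(\cdot,f,\lambda)$ is a maximal monotone operator in $L^2(X,\nu)$ with $\overline{D(\partial\mathcal{E}_m(\cdot,f,\lambda))}=L^2(X,\nu)$. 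By \cite{Brezis}, for every $v_0\in L^2(X,\nu)$ the Cauchy problem \eqref{L1ACP2} admits a unique strong solution $v\in C([0,T];L^2(X,\nu))\cap W^{1,1}_{loc}((0,T);L^2(X,\nu))$ with $v(0)=v_0$, for every $T>0$. To see that this is exactly a strong solution of \eqref{L1ParabROFmodN}, I would use the sum rule $\partial\mathcal{E}_m(\cdot,f,\lambda)=\partial\mathcal{F}_m+\lambda\,\partial\mathcal{G}_f$ (\cite[Corollary 2.11]{Brezis}, applicable since $\mathcal{G}_f$ is continuous), the identity $\partial\mathcal{F}_m=-\Delta_1^m$ (Definition~\ref{el1laplac}), and the elementary fact $v\in\partial\mathcal{G}_f(u)\iff v\in{\rm sign}(u-f)$ already used in the proof of Theorem~\ref{El1}; together they turn $-v'(t)\in\partial\mathcal{E}_m(v(t),f,\lambda)$ into: there exists $\xi(t)\in{\rm sign}(v(t)-f)$ with $\lambda\xi(t)+v'(t)\in\Delta_1^m v(t)$ for a.e. $t$, with $\xi(t)$ measurable in $t$ since it equals the measurable selection $z(t)-v'(t)$ for some $z(t)\in\Delta_1^m v(t)$.

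For the contraction principle \eqref{L1contrprin} it suffices to prove that $\mathcal{A}:=\partial\mathcal{E}_m(\cdot,f,\lambda)$ is completely accretive in $L^2(X,\nu)$. Indeed, in that case $\mathcal{A}$ is m-completely-accretive (it is m-accretive, being a subdifferential), its resolvents $(I+s\mathcal{A})^{-1}$ are contractions in every $L^q(X,\nu)$, $1\le q\le\infty$, and passing to the limit in the exponential formula $T(t)v_0=\lim_n(I+\tfrac tn\mathcal{A})^{-n}v_0$ gives $\|v(t)-w(t)\|_q=\|T(t)v_0-T(t)w_0\|_q\le\|v_0-w_0\|_q$. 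Now $\mathcal{A}=\partial\mathcal{F}_m+\lambda\,\partial\mathcal{G}_f=-\Delta_1^m+\lambda\,{\rm sign}(\cdot-f)$: the operator $-\Delta_1^m$ is completely accretive, as recorded after Definition~\ref{el1laplac}, and $\partial\mathcal{G}_f$ is the subdifferential of the integral functional $u\mapsto\int_X|u(x)-f(x)|\,d\nu(x)$ of a (pointwise) convex integrand, hence completely accretive as well (\cite{BCr2}). Since complete accretivity is characterized by sign conditions of the form $\int_X T(u_1-u_2)(w_1-w_2)\,d\nu\ge0$ for non-decreasing truncations $T$ with $T(0)=0$, and such conditions are stable under addition of the pairs $(u_i,w_i)$, the sum $\mathcal{A}$ is completely accretive, which closes the argument.

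The only genuinely delicate point is the bookkeeping around the multivalued fidelity term ${\rm sign}(\cdot-f)$: one must make sure the Brezis sum rule for subdifferentials really applies (it does, because $\mathcal{G}_f$ is everywhere finite and continuous on $L^2(X,\nu)$) and that the pointwise operator $\lambda\,{\rm sign}(\cdot-f)$ fits the completely accretive framework (which it does as the subdifferential of an integral functional of a convex integrand). Everything else is a direct assembly of the abstract gradient-flow theory in \cite{Brezis} and the completely accretive framework in \cite{BCr2}, so no further obstacles are expected.
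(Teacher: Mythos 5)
Your proposal is correct and follows essentially the same route as the paper: existence and uniqueness come from Brezis' theory for the gradient flow of the convex, lower semi-continuous functional $\mathcal{E}_m(\cdot,f,\lambda)$, and the $L^q$-contraction comes from the complete accretivity of $\partial\mathcal{E}_m(\cdot,f,\lambda)$, proved by splitting $v_i=w_i+\lambda\xi_i$ with $w_i\in\partial\mathcal{F}_m(u_i)$, $\xi_i\in{\rm sign}(u_i-f)$, and using that $p(u_2-u_1)=p\bigl((u_2-f)-(u_1-f)\bigr)$ makes the fidelity term's contribution nonnegative. The extra details you supply (the sum rule for subdifferentials, the measurable selection of $\xi(t)$, and the passage from resolvent contractions to the semigroup via the exponential formula) are exactly the steps the paper leaves implicit.
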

Note that the contraction principle \eqref{L1contrprin} in any $L^q$-space follows from the fact that the operator $\partial \mathcal{E}_m(\cdot, f, \lambda) $ is completely accretive. Indeed, given $(u_1,v_1), (u_2,v_2) \in \partial \mathcal{E}_m(\cdot, f, \lambda) $ and  $$p \in P_0:= \{ q \in C^\infty (\R) \ : \ 0\leq q' \leq 1, \  \ \hbox{supp}(q') \ \hbox{compact and } \ 0 \not\in \hbox{supp} \},$$
we need to prove that
$$\int_X (v_2 - v_1)p(u_2 - u_1) d\nu \geq 0.$$
Now, there exist $\xi_i \in {\rm sign}(u_i - f)$ such that $v_i - \lambda \xi_i =w_i \in \partial \mathcal{F}_m(u_i)$, $i = 1,2$. Then, since $\partial \mathcal{F}_m$ is a completely accretive operator and
 $$\lambda \int_X (\xi_2 - \xi_1)p(u_2 - u_1) d\nu = \lambda \int_X (\xi_2 - \xi_1)p((u_2 - f) - (u_1- f)) d\nu \geq 0,$$
we have that
$$\int_X (v_2 - v_1)p(u_2 - u_1) d\nu = \int_X (w_2 - w_1)p(u_2 - u_1) + \lambda \int_X (\xi_2 - \xi_1)p(u_2 - u_1) d\nu \geq 0.$$

Let $(T_\lambda(t))_{t \geq 0}$ be the semigroup in $L^2(X, \nu)$ associated with the operator $\partial \mathcal{E}_m(\cdot, f, \lambda) $, that is, $ T_\lambda(t)v_0$ is the solution of problem \eqref{L1ParabROFmodN}.
On  account of the contraction principle we have that for any  $u^*\in M(f,\lambda)$, if $\mathcal{L}_{u^*}(u):= \Vert u - u^* \Vert_2$, then
\begin{equation}\label{Fliapu} \mathcal{L}_{u^*} \ \hbox{is a Lyapunov functional for the semigroup} \ (T_\lambda(t))_{t \geq 0}.
\end{equation}
Indeed, for $t>s$, we have
$$
\mathcal{L}_{u^*}(T_{\lambda}(t)v_0) = \Vert T_{\lambda}(t)v_0-u^*\Vert_2= \Vert T_{\lambda}(t-s)\left(T_{\lambda}(s)v_0\right)-T_{\lambda}(t-s)u^* \Vert_2 $$ $$\le \Vert T_{\lambda}(s)v_0- u^*\Vert_2 = \mathcal{L}_{u^*}(T_{\lambda}(s)v_0).
 $$
 \begin{theorem}\label{Asimpb} Assume that $f \in L^1X, \nu)$. Let $v_0 \in L^2(X, \nu)$ and $v(t):= T_{\lambda}(t)v_0$. If the $\omega$-limit set $$\omega(v_0):= \{ w \in L^2(X, \nu) \ : \ \exists t_n \to + \infty \ s.t. \lim_{n \to \infty} v(t_n) = w \}$$
is non-empty,  then there exists  $u^*\in M(f,\lambda)$ such that
$$\lim_{t \to \infty} v(t) = u^* \quad \hbox{in} \ \ L^2(X, \nu).$$
 \end{theorem}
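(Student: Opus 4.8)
The plan is to exploit the fact that $\mathcal{E}_m(\cdot, f, \lambda)$ is a convex, lower semi-continuous functional on the Hilbert space $L^2(X,\nu)$, so that $T_\lambda(t) = (I + \partial \mathcal{E}_m(\cdot,f,\lambda))^{-\infty}$ generates a gradient-flow semigroup, and to run the standard LaSalle-type argument for gradient flows of convex functionals. First I would record the energy inequality: since $v(t) = T_\lambda(t)v_0$ solves $v'(t) \in -\partial\mathcal{E}_m(v(t),f,\lambda)$, the map $t \mapsto \mathcal{E}_m(v(t),f,\lambda)$ is non-increasing, absolutely continuous on $[\delta,\infty)$ for $\delta > 0$, and $\frac{d}{dt}\mathcal{E}_m(v(t),f,\lambda) = -\|v'(t)\|_2^2$; moreover $\int_0^\infty \|v'(t)\|_2^2\, dt < \infty$ and $\mathcal{E}_m(v(t),f,\lambda) \downarrow \ell$ for some $\ell \geq \mathcal{E}_m(f,\lambda)$ as $t \to \infty$. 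Next, I would take $w \in \omega(v_0)$ with $v(t_n) \to w$ in $L^2$; using $\int_0^\infty \|v'\|_2^2 < \infty$ together with the fact that $\|v'(t)\|_2$ is non-increasing (a consequence of the contraction property of the semigroup, since $v'(t)$ is the minimal section and $t \mapsto \|v'(t)\|_2$ decreases for gradient flows of convex functionals), one gets $\|v'(t)\|_2 \to 0$. By lower semi-continuity of $\mathcal{E}_m(\cdot,f,\lambda)$ and the fact that $-v'(t_n) \in \partial\mathcal{E}_m(v(t_n),f,\lambda)$ with $\|v'(t_n)\|_2 \to 0$, a standard closedness argument for maximal monotone graphs gives $0 \in \partial\mathcal{E}_m(w,f,\lambda)$, i.e. $w \in M(f,\lambda)$.

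It then remains to upgrade convergence along the subsequence $t_n$ to convergence of the whole orbit. Here I would invoke \eqref{Fliapu}: since $w \in M(f,\lambda)$, the functional $\mathcal{L}_w(u) = \|u - w\|_2$ is a Lyapunov functional for $(T_\lambda(t))_{t\geq 0}$, so $t \mapsto \|v(t) - w\|_2$ is non-increasing; as it has a subsequence tending to $0$, the full limit $\lim_{t\to\infty}\|v(t)-w\|_2 = 0$ follows, with $u^* := w$. This last step is clean precisely because the contraction principle was already established in Theorem~\ref{L1ExistUniqROF}.

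The main obstacle is the step showing $\|v'(t)\|_2 \to 0$ (equivalently, that the "velocity" of the flow vanishes), since this is what licenses passing to the limit in the differential inclusion to land in $M(f,\lambda)$; I would handle it via the Brezis regularizing-effect estimates for gradient flows of convex functionals — namely that $t \mapsto \|v'(t)\|_2$ is non-increasing on $(0,\infty)$ and $v'(t) = -(\partial\mathcal{E}_m(v(t),f,\lambda))^\circ$ is the element of minimal norm — combined with $\int_0^\infty\|v'(t)\|_2^2\,dt \le \mathcal{E}_m(v_0,f,\lambda) - \mathcal{E}_m(f,\lambda) < \infty$ when $v_0 \in D(\mathcal{E}_m)$, and a density/approximation argument (or directly the $t^{-1}$-type bound $\|v'(t)\|_2^2 \le \frac{1}{t}(\mathcal{E}_m(v_0,f,\lambda)-\mathcal{E}_m(f,\lambda))$) to cover general $v_0 \in L^2(X,\nu)$. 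Everything else is the routine LaSalle invariance argument adapted to the nonlinear semigroup setting.
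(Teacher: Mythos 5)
Your proposal is correct and follows essentially the same route as the paper: the paper likewise shows that the velocity $\frac{d^+v}{dt}(t_n)$ tends to zero (citing \cite[Theorem 3.10]{Brezis}, after invoking Theorem~\ref{thin001} to guarantee $0\in R(\partial\mathcal{E}_m(\cdot,f,\lambda))$), concludes $0\in\partial\mathcal{E}_m(\cdot,f,\lambda)(u^*)$ by closedness of the subdifferential, and then upgrades subsequential to full convergence via the Lyapunov functional \eqref{Fliapu}. The only cosmetic difference is that you re-derive the velocity decay from the energy-dissipation identity and the monotonicity of $\Vert v'(t)\Vert_2$ (which needs only $\mathcal{E}_m\ge 0$ rather than the a priori nonemptiness of $M(f,\lambda)$), instead of quoting Brezis's theorem directly.
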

 \begin{proof} Let  $u^* \in \omega(v_0)$, then there exists $t_n \to + \infty$ such that $$\lim_{n \to \infty} v(t_n) = u^*.$$
 By \cite[Theorem 3.2]{Brezis}, we get
 \begin{equation}\label{e11}
 \frac{d^+ v}{dt}(t) + \partial \mathcal{E}_m(\cdot, f, \lambda)(v(t)) \ni 0 \quad \hbox{for all} \ t\in (0, +\infty)
 \end{equation}
 and, by Theorem~\ref{thin001}, we have that $M(f, \lambda) \not= \emptyset$ thus $0\in R(\partial \mathcal{E}_m(\cdot, f, \lambda))$. Therefore, by \cite[Theorem 3.10]{Brezis},
  \begin{equation}\label{e111}
 \lim_{n \to \infty}\frac{d^+ v}{dt}(t_n) =0.
 \end{equation}
 Since $\partial \mathcal{E}_m(\cdot, f, \lambda)$ is closed, from \eqref{e11} and \eqref{e111} we get
 $$0 \in \partial \mathcal{E}_m(\cdot, f, \lambda)(u^*),$$
 i.e. $u^*\in M(f,\lambda)$. Now, by \eqref{Fliapu},  $\mathcal{L}_{u^*}$  is a Lyapunov functional for the semigroup $(T_\lambda(t))_{t \geq 0}$, from where it follows that
 $$\lim_{t \to \infty} v(t) = u^* \quad \hbox{in} \ \ L^2(X, \nu).$$
 \end{proof}
 Proving that the $\omega$-limit set $\omega(v_0)$ is non-empty is not an easy task here. For example, one could try to proceed with the usual method of proving that the resolvent is compact, but this requires the use of regularity results which are difficult to obtain in our context due to the non-locality of the problem. Nonetheless, in finite graphs it is trivially true that the $\omega$-limit  set is non-empty. Consequently, we have the following result.

\begin{corollary} Let $[X,d,m]$ be the metric random walk space associated to a connected and weighted discrete graph $G = (V(G), E(G))$. Then, for every $v_0 \in L^2(X, \nu)$ and for $v(t):= T_{\lambda}(t)v_0$,  there exists  $u^*\in M(f,\lambda)$ such that
$$\lim_{t \to \infty} v(t) = u^* \quad \hbox{in} \ \ L^2(V(G), \nu_G).$$

\end{corollary}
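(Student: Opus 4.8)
The plan is to read this off directly from Theorem~\ref{Asimpb}, whose only hypothesis that is not for free is the non-emptiness of the $\omega$-limit set $\omega(v_0)$. So the whole argument reduces to checking that $\omega(v_0)\neq\emptyset$ when the underlying metric random walk space comes from a finite connected weighted graph. First I would fix some $u^\ast\in M(f,\lambda)$; this is legitimate because $M(f,\lambda)\neq\emptyset$ by Theorem~\ref{thin001}. By the Lyapunov property~\eqref{Fliapu}, $\Vert T_\lambda(t)v_0-u^\ast\Vert_2\le \Vert v_0-u^\ast\Vert_2$ for every $t\ge 0$, so the entire trajectory $\{v(t):t\ge 0\}$ remains in a fixed closed ball of $L^2(V(G),\nu_G)$.

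Next I would use that, since $G$ is finite, the space $L^2(V(G),\nu_G)$ is finite-dimensional, and hence the closed ball containing the trajectory is compact. Consequently one can extract a sequence $t_n\to+\infty$ along which $v(t_n)$ converges in $L^2(V(G),\nu_G)$, which shows $\omega(v_0)\neq\emptyset$. This is precisely the point that cannot be handled in the general metric random walk setting, where the expected route — compactness of bounded trajectories, equivalently compactness of the resolvent of $\partial\mathcal{E}_m(\cdot,f,\lambda)$ — is obstructed by the non-local nature of $\Delta_1^m$; on a finite graph it is automatic.

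Finally I would invoke Theorem~\ref{Asimpb}: since $\omega(v_0)\neq\emptyset$, there exists $u^\ast\in M(f,\lambda)$ with $\lim_{t\to\infty}v(t)=u^\ast$ in $L^2(V(G),\nu_G)$, which is exactly the assertion of the corollary. I do not expect any genuine obstacle: the only substantive content is that the abstract hypothesis of Theorem~\ref{Asimpb} holds on a finite graph by finite-dimensional compactness, and everything else has already been established (existence and uniqueness of the strong solution, $M(f,\lambda)\neq\emptyset$, the Lyapunov functional).
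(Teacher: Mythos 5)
Your proposal is correct and coincides with the paper's own (essentially implicit) argument: the paper notes that on a finite graph the $\omega$-limit set is trivially non-empty and then invokes Theorem~\ref{Asimpb}, which is exactly what you do, with the added (welcome) detail that the trajectory is bounded by the Lyapunov property~\eqref{Fliapu} and hence precompact in the finite-dimensional space $L^2(V(G),\nu_G)$.
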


\bigskip
\noindent {\bf Acknowledgment.} The authors have been partially supported  by the Spanish MICIU and FEDER, project PGC2018-094775-B-100. The second author was also supported by the Spanish MICIU under Grant BES-2016-079019, which is also supported by the European FSE.

\end{document}